\documentclass{article}

\usepackage{makeidx}
\usepackage{latexsym}
\usepackage{amsfonts}
\usepackage{amssymb}
\usepackage{amsmath}
\usepackage{amstext}
\usepackage{amsthm}
\usepackage{mathrsfs}
\usepackage{mathabx}

\addtolength{\textwidth}{3cm}
\addtolength{\hoffset}{-1.5cm}
\addtolength{\textheight}{2cm}
\addtolength{\voffset}{-1cm}

\newtheorem{theorem}{Theorem}

\newtheorem{lemma}[theorem]{Lemma}

\newtheorem{corollary}[theorem]{Corollary}

\newtheorem{proposition}[theorem]{Proposition}

\begin{document}

\title{Breaking ties in collective decision making\footnote{Daniela Bubboloni was supported by GNSAGA of INdAM.
}}
\author{\textbf{Daniela Bubboloni}\\
%EndAName
{\small {Dipartimento di Matematica e Informatica ``Ulisse Dini''} }\\
\vspace{-6mm}\\
{\small {Universit\`{a} degli Studi di Firenze} }\\
\vspace{-6mm}\\
{\small {viale Morgagni 67/a, 50134, Firenze, Italy}}\\
\vspace{-6mm}\\
{\small {e-mail: daniela.bubboloni@unifi.it}}\\
\vspace{-6mm}\\
{\small tel: +39 055 2759667}\\
\vspace{-6mm}\\
{\small {https://orcid.org/0000-0002-1639-9525}}
 \and \textbf{Michele Gori}
 \\
%EndAName
{\small {Dipartimento di Scienze per l'Economia e  l'Impresa} }\\
\vspace{-6mm}\\
{\small {Universit\`{a} degli Studi di Firenze} }\\
\vspace{-6mm}\\
{\small {via delle Pandette 9, 50127, Firenze, Italy}}\\
\vspace{-6mm}\\
{\small {e-mail: michele.gori@unifi.it}}\\
\vspace{-6mm}\\
{\small tel: +39 055 2759707}\\
\vspace{-6mm}\\
{\small {https://orcid.org/0000-0003-3274-041X}}
}

\maketitle

\begin{abstract}
\noindent Many classical social preference (multiwinner social choice) correspondences are resolute only when two alternatives and an odd number of individuals are considered. Thus, they generally admit several resolute refinements, each of them naturally interpreted as a tie-breaking rule. In this paper
we find out conditions which make a social preference (multiwinner social choice) correspondence admit a resolute refinement fulfilling suitable weak versions of the anonymity and neutrality principles, as well as reversal symmetry (immunity to the reversal bias).
\end{abstract}

\vspace{4mm}

\noindent \textbf{Keywords:} social preference correspondence; multiwinner social choice correspondence;  resoluteness; anonymity; neutrality; tie-breaking rule.

\vspace{2mm}

\noindent \textbf{JEL classification:} D71.

\section{Introduction}

Consider a committee having $h\ge 2$ members whose purpose is to determine a ranking of $n \ge 2$ alternatives and assume that committee members are supposed to express their preferences via a ranking of the alternatives. A preference profile is a list of individual preferences, one for each committee member, and a social preference correspondence ({\sc spc}) is a correspondence from the set of preference profiles to the set of alternatives rankings. Each {\sc spc} represents then a particular decision process which
determines a set of social preferences, whatever individual preferences the committee members express.

In the literature many {\sc spc}s have been proposed and studied. Most of them satisfy two requirements which are considered strongly desirable by social choice theorists, namely anonymity and neutrality. A {\sc spc} is said anonymous if the identities of individuals are irrelevant to determine the social outcome, that is, it selects the same social outcome for any pair of preference profiles such that we get one from the other by permuting individual names; neutral if alternatives are equally treated, that is, for every pair of preference profiles such that we get one from the other by permuting alternative names, the social outcomes associated with them coincide up to the considered permutation.

Since in many cases collective decision processes are required to select a unique outcome, an important role in social choice theory is played by resolute {\sc spc}s, namely those
{\sc spc}s associating a singleton with any preference profile. Unfortunately, classical {\sc spc}s are not resolute in general.
As a consequence, if the members of a committee agree to use a classical {\sc spc} to aggregate their preferences and a unique outcome is needed, then they also need to find an agreement on which tie-breaking rule to use when two or more rankings are selected by the chosen {\sc spc}.

The concept of tie-breaking rule can be naturally formalized in terms of refinement of a {\sc spc}. Let $C$ and $C'$ be two {\sc spc}s. We say that $C'$ is a refinement of $C$ if, for every preference profile, the set of social preferences selected by $C'$ is a subset of the set of social preferences selected by $C$. Thus, refinements of $C$ can be thought as methods to reduce the ambiguity in the choice made by $C$. In particular, resolute refinements of $C$ completely eliminate the ambiguity leading to a unique social outcome, so that they can be identified with the possible tie-breaking rules for $C$.
In general, if $C$  is not resolute, it admits many resolute refinements. Thus, an interesting issue to address is to understand whether it is possible to find resolute refinements of $C$ which satisfy suitable properties. In particular, since it is immediate to understand that resolute refinements of even anonymous and neutral {\sc spc}s  are not generally anonymous and neutral, one may wonder whether anonymous and neutral resolute refinements of a given {\sc spc} do exist. Unfortunately, as proved by Bubboloni and Gori
(2014,  Theorem 5), the existence of an anonymous, neutral and resolute {\sc spc} is equivalent to the strong arithmetical condition $\gcd(h,n!)=1$ which is rarely satisfied in practical situations\footnote{The condition $\gcd(h,n!)=1$ was first introduced by
Moulin (1983, Theorem 1, p.25) as a necessary and sufficient condition for the existence of anonymous, neutral and efficient social choice functions; Bubboloni and Gori
(2014, Theorem 15) generalize Moulin's result considering in addition the qualified majority principle; Campbell and Kelly
(2015) show that if $n > h$ (so that $\gcd(h,n!)\neq 1$) and anonymous, neutral and
resolute  social choice functions exist, then those  functions exhibit even more undesirable behaviours than
inefficiency. The existence of anonymous, neutral and resolute {\sc spc}s satisfying further properties is also studied by Bubboloni and Gori (2015), who focus on the majority principle, and Do\u gan and Giritligil (2015), who instead focus on monotonicity properties.}. As a consequence, given a {\sc spc}, we cannot in general get any anonymous and neutral resolute refinement of it.

In many concrete cases, however, anonymity and neutrality are too strong requirements.
That happens, for instance, for those committees having a president who is more influential than the other committee members or when a committee evaluates job candidates giving the female candidates an advantage over the male ones. Indeed, in such situations, individual and alternative names are not immaterial. For such a reason, we propose generalized versions of anonymity and neutrality which can take into account possible distinctions among individuals and among alternatives. Consider $S_h$, the group of permutations of the set $H=\{1,\ldots, h\}$ of committee member names and $S_n$, the group of permutations of the set $N=\{1,\ldots, n\}$ of alternative names.
Given a subgroup $V$ of $S_h$ we say that a {\sc spc} is $V$-anonymous if it selects the same social outcome for any pair of preference profiles such that we get one from the other by permuting individual names according to a permutation in $V$; given a subgroup $W$ of $S_n$ we say that a {\sc spc} is $W$-neutral if, for every pair of preference profiles such that we get one from the other by permuting alternative names according to a permutation in $W$, it associates with them social outcomes which coincide up to the considered permutation. The special groups of permutations $V$ and $W$ need to be determined on the basis of the specific features of the situation at hand. Of course, $S_h$-anonymity corresponds to anonymity and $S_n$-neutrality corresponds neutrality. As a consequence, each anonymous (neutral) {\sc spc} is $V$-anonymous ($W$-neutral) for every $V$ ($W$). It is also worth noting that, depending on the particular structure of $V$ and $W$, it is possible to get $V$-anonymous, $W$-neutral and resolute {\sc spc}s even when no anonymous, neutral and resolute {\sc spc} exists\footnote{For instance, in the trivial case where $V$ and $W$ are singletons whose unique element is the identity permutation, every resolute {\sc spc} is $V$-anonymous and $W$-neutral.}. That implies that, in a situation where only suitable weak versions of anonymity and neutrality are sensible, there may be room for finding a resolute {\sc spc} satisfying such versions of anonymity and neutrality, possibly being the refinement of a given {\sc spc}.

In this paper we propose conditions on $V$ and $W$ which are necessary and sufficient to make a $V$-anonymous and
$W$-neutral {\sc spc} admit a $V$-anonymous and $W$-neutral resolute refinement.
Those conditions are summarized by an arithmetical relation between the size $|W|$ of  $W$ and a
special number $\gamma(V)$ associated with $V$ (defined in \eqref{type-number}). Namely, given a $V$-anonymous and $W$-neutral {\sc spc} $C$ (like, for instance, the Borda, the Copeland, the Minimax and the Kemeny {\sc spc}s which are anonymous and neutral and then $V$-anonymous and $W$-neutral), we have that $C$ admits a $V$-anonymous and $W$-neutral resolute refinement if and only if\footnote{Throughout the paper, given integers $x_1,\dots, x_s$, for some $s\in \mathbb{N}$, we denote by $\gcd(x_1,\dots,x_s)$ their greatest common divisor and by $\mathrm{lcm}(x_1,\dots,x_s)$ their least common multiple. }
\begin{equation}\label{intro0}
\gcd(\gamma(V),|W|)=1
\end{equation}
(Theorem \ref{application1}).
The computation of the numbers $\gamma(V)$ and $|W|$ is, in principle, hard. Fortunately, it becomes easy for many subgroups of interest in the applications (see Section \ref{app}).  For example, consider a situation where individuals can be divided into disjoint subcommittees, say $H_1,\dots,H_r$, where individuals have the same decision power and that alternatives can be divided into disjoint subclasses, say $N_1,\dots,N_s$, where alternatives have the same exogenous importance.
Assume first that such subcommittees (subclasses) can be ranked is such a way that, for every pair of subcommittees (subclasses), each individual (alternative) belonging to the subcommittee (subclass) having higher rank is more influential (more important) than any individual (alternative) belonging to the other subcommittee (subclass)\footnote{The presence of a president or  gender discrimination previously mentioned are special instances.}.
In this case, it is natural to require a collective decision process not to distinguish among individuals (alternatives) belonging to the same subcommittee (subclass).
Thus, two groups $V$ and $W$ naturally arise: $V$ is given by those permutations of individual names mapping every $H_i$ into itself; $W$ is given by those permutations of alternative names mapping every $N_j$ into itself. Clearly $|W|=\prod_{j=1}^s |N_j|!$ and it can be shown that
\begin{equation}\label{describe}
\gamma(V)=\gcd(|H_1|,\ldots,|H_r|),
\end{equation}
so that condition \eqref{intro0} becomes\footnote{Note that anonymity corresponds to a have $H$ as a unique subcommittee and neutrality corresponds to a have $N$ as a unique subclass. In this special case \eqref{comment} reduces to the already commented condition $\gcd(h,n!)=1$.}
\begin{equation}\label{comment}
\gcd\left(|H_1|,\ldots,|H_r|,\prod_{j=1}^s |N_j|!\right)=1.
\end{equation}
Assume now that subclasses can be ranked as before while no ranking of subcommittees is available. However, it is known that subcommittees having the same size must have the same impact in the final decision. This particular situation leads to consider a collective decision process which does not distinguish among individuals (alternatives) belonging to the same subcommittee (subclass) and also does not distinguish among subcommittees having the same size.
Thus, $V$ is now given by those permutations of individual names mapping every $H_i$ into one of the subcommittee having its same size (possibly itself), while  $W$ is the same as before. We prove then that
\[
\gamma(V)=\gcd(|H_1|t_1,\ldots,|H_r|t_r),
\]
where $t_i$ counts the number of subcommittees of size $|H_i|$,
so that condition \eqref{intro0} becomes
\[
\gcd\left(|H_1|t_1,\ldots,|H_r|t_r,\prod_{j=1}^s |N_j|!\right)=1.
\]

We further deepen the analysis of the resolute refinements of a given {\sc spc} by considering the property of reversal symmetry,
a property first introduced by Saari (1994). Recall that the reversal of a ranking of alternatives is the ranking obtained by making the best alternative
the worst, the second best alternative the second worst, and so on, and that a {\sc spc} is said reversal symmetric if, for any pair of preference profiles such that one is obtained by the other by reversing each individual preference, it associates with one of them a set of social preferences if and only if it associates with the other one the set of their reversal. We prove that the condition
\begin{equation}\label{intro1}
\gcd(\gamma(V),\mathrm{lcm}(|W|,2))=1,
\end{equation}
along with other technical conditions, is necessary and sufficient to make any $V$-anonymous, $W$-neutral and reversal symmetric {\sc spc} admit a $V$-anonymous, $W$-neutral and reversal symmetric resolute refinement (Theorem \ref{application2}).
In particular, from that result, it is easily deduced that the Borda and the Copeland {\sc spc}s admit a $V$-anonymous, $W$-neutral and reversal symmetric resolute refinement if and only if \eqref{intro1} holds true.

Of course, the study of the resolute refinements and their properties is meaningful also in other frameworks. For such a reason we follow up on this issue focusing on the so-called $k$-multiwinner social choice correspondences ({\sc $k$-scc}s), that is, those procedures which associate with any preference profile a family of sets of $k$ alternatives to be interpreted as the family of all the sets of alternatives that can be considered the best $k$ alternatives for the society. This framework, which extends the classical and well-established single winner framework (corresponding to the case $k=1$), has been explored for about thirty years and constitutes nowadays an interesting and growing research area. We refer to the recent papers by Elkind et al. (2017) and Faliszewski et al. (2017) for further information on that topic.

We study the problem to determine whether a {\sc $k$-scc} admits $V$-anonymous and $W$-neutral resolute refinements\footnote{Those concepts are easily adapted to {\sc $k$-scc}s. See Section $4$.} and whether some of them are immune to the reversal bias too. Recall that a $k$-{\sc scc} is immune to the reversal bias if it never associates the same singleton with a preference profile and its reversal (see Saari and Barney, 2003, and Bubboloni and Gori, 2016a). We prove that if \eqref{intro0} holds true, then any  $V$-anonymous and $W$-neutral {\sc $k$-scc} admits a  $V$-anonymous and $W$-neutral resolute refinement (Theorem \ref{application3}). Assuming \eqref{intro1}, we also find out conditions to make every $V$-anonymous, $W$-neutral and immune to the reversal bias {\sc $k$-scc} admit a resolute refinement having the same properties (Theorem \ref{application4}). Remarkably, that analysis put in evidence the role played by the number $k$ of selected alternatives and the number $n$ of alternatives pointing out that the existence of an  immune to the reversal bias resolute refinement is guaranteed when  the pair $(n,k)$ belongs to a specific set\footnote{We stress that equality in \eqref{describe} along with the described results about {\sc $k$-scc}s (Theorem \ref{application3} and \ref{application4}) imply as an immediate consequence the main result in Bubboloni and Gori (2016b), namely Theorem 8 therein.}.

We emphasize that the methods developed in the paper also allow to deal with situations in which forms of symmetry conceptually different from $V$-anonymity and $W$-neutrality are required. To that purpose we examine, in detail, the case of a committee facing the problem of electing a subcommittee of size $k$ among $n$ members of the committee who run as candidates. In this particular situation, a collective decision process should make no distinction among individuals who are candidates and among individuals who are not candidates and, at the same time, it must carefully take into account the fact that the alternatives to choose from are now a subset of the set of  individuals. In Section \ref{case-forth}, we prove that if $\gcd(h,n)=1$, then any anonymous and neutral {\sc $k$-scc} admits a resolute refinement consistent with the above described requirements.

Note that, at least in the single winner case, two special resolute refinements are sometimes used. They are built using two simple methods to break ties.
The first method, proposed by Moulin (1988), is based on a tie-breaking agenda, that is, an exogenously given ranking of the alternatives: in ambiguity case, it is chosen the alternative of the social outcome which is best ranked in the agenda. The second one is instead based on the preferences of one of the individuals appointed as tie-breaker: in ambiguity case, it is chosen the alternative of the social outcome which is best ranked by the tie-breaker.
Of course, the resolute refinements built through a tie-breaking agenda fail in general to be neutral while the one built through a tie-breaker fail in general to be anonymous.
Our work allows to cast these two special types of tie-breaking rules within a more general theory (Section 8).\footnote{Some remarks about the tie-breaking agenda and the role of the tie-breaker are proposed in Do\u gan and Giritligil (2015) and Bubboloni and Gori (2016b), where the problem of finding resolute refinements of {\sc 1-scc}s is considered.}

Note also that some contributions dealing with weak versions of anonymity and neutrality are present in the literature. Assuming there are only two alternatives, Perry and Powers (2008) calculate the number of resolute {\sc spc}s that are anonymous and neutral and the number of {\sc spc}s satisfying a weak version of anonymity (that is, all individuals but one are anonymous) and neutrality; Powers (2010) shows that a {\sc spc} satisfies the previously described weak anonymity, neutrality and Maskin monotonicity if and only if it is close to {\sc spc} obeying an absolute qualified majority;  Quesada (2013) identifies seven axioms (among which are weak versions of anonymity and neutrality) characterizing the rules that are either the relative majority rule or the relative majority rule where a given individual can break the ties; Campbell and Kelly (2011, 2013) show that the relative majority is implied both by a suitable weak version of anonymity, neutrality and monotonicity, as well as by limited neutrality, anonymity and monotonicity. In the general case for the number of alternatives, some observations about different levels of anonymity and neutrality can be found in the paper by Kelly (1991). Bubboloni and Gori (2015, 2016b) consider, only for {\sc 1-scc}s, weak versions of anonymity and neutrality, determined by partitions of individuals and alternatives.
More recently, a wide literature about {\sc $k$-scc}s which are representation-focused on some attributes of the alternatives (sex, age, profession, ethnicity) has been growing. See, for instance, Lang and Skowron (2016), Bredereck et al. (2017), Celis et al. (2018).

The techniques used in the paper are based on the algebraic approach focused on the theory of finite permutation groups and the notion of action of a group on a set\footnote{It is worth mentioning that Kelly (1991) discusses the role of symmetry in the arrovian framework through suitable subgroups of the symmetric group.
Within the topological approach to social choices developed by Chichilnisky (1980), several algebraic concepts, in particular that of symmetric group, are crucial for proving many results.} proposed by E\u gecio\u glu (2009) and E\u gecio\u glu and Giritligil (2013) for analysing the set of preference profiles and later developed by Bubboloni and Gori (2014, 2015, 2016b), which we mainly refer to, and Do\u gan and Giritligil (2015).
However, generalizing the ideas and results in Bubboloni and Gori (2015, 2016b) for managing the problem of finding resolute refinements of {\sc spc}s
and {\sc $k$-scc} is in no way a trivial adaptation. On the contrary, some proofs require complex and tricky arguments and, in particular, a deep consideration of the concept of regular group, first introduced in Bubboloni and Gori (2015), is needed.

The paper is organized as follows. In Sections \ref{prelim}-\ref{scc} we give the definitions, the notation and the basic results needed to understand the model. In Section \ref{maingen} the main results are stated and developed. Section \ref{regular-gr} is devoted to a careful analysis of the concept of regular group. Section
\ref{gen-an-neut} explores $V$-anonymity and $W$-neutrality as an application of the main results of the general theory. Section \ref{app} is about some concrete applications of $V$-anonymity and $W$-neutrality, as described in this introduction. Finally, appendices \ref{sec-tec} and \ref{app-C} contain the proofs of the main results.

\section{Preliminaries}\label{prelim}

Throughout the paper, we assume $0\not\in \mathbb{N}$ and we set $\mathbb{N}_0=\mathbb{N}\cup\{0\}$. For $k\in \mathbb{N}$,  the set $\{1,\ldots,k\}$
is denoted by $\ldbrack k \rdbrack$.
Given a finite set $X$ we denote by $|X|$ its size. A subset of $X$ of size $k$ is called a $k$-subset of $X$. We denote by
$\mathbb{P}(X)$ the set of the subsets of $X$ and by $\mathbb{P}_k(X)$ the set of the $k$-subsets of $X$.

\subsection{Relations}

Let $X$ be a nonempty and finite set. A relation on $X$ is a subset of $X^2$.
The set of the relations on $X$ is denoted by $\mathbf{R}(X)$.
Fix $R\in \mathbf{R}(X)$. Given $x,y\in X$, we usually write $x\succeq_R y$ instead of $(x,y)\in R$ and  $x\succ_R y$ instead of $(x,y)\in R$ and $(y,x)\notin R$.
We say that $R$ is complete if, for every $x,y\in X$, $x\succeq_R y$ or $y\succeq_R x$; reflexive if, for every $x\in X$, $x\succeq_R x$; irreflexive if, for every $x\in X$, $x\not\succeq_R x$; antisymmetric if, for every $x,y\in X$, $x\succeq_R y$ and $y\succeq_R x$ imply $x=y$; asymmetric if, for every $x,y\in X$, $x\succeq_R y$ implies $y\not \succeq_R x$; transitive if, for every $x,y,z\in X$, $x\succeq_Ry$ and $y\succeq_R z$ imply $x\succeq_R z$; acyclic if, for every sequence $x_1,\ldots,x_s$ of $s\ge 2$ distinct elements of $X$ such that $x_i\succeq_R x_{i+1}$ for all $i\in\ldbrack s-1 \rdbrack$, we have that $x_s \not\succeq_R x_1$. It is well-known that if $R$ is transitive and irreflexive (antisymmetric, asymmetric), then $R$ is acyclic.
Complete and transitive relations on $X$ are called orders on $X$. The set of  orders on $X$ is denoted by $\mathbf{O}(X)$.
Complete, transitive and antisymmetric relations on $X$ are called linear orders on $X$. The set of linear orders on $X$ is denoted by $\mathbf{L}(X)$.

\subsection{Groups, permutations and relations}\label{plo}

In the paper we make use of basic finite group theory  and use standard notation. Our general reference is Jacobson (1974). For completeness we  give here the main notions and notation we are going to use.
Let $G$ be a finite group.
Given $g\in G$ we denote by $|g|$ the order of $g$.
If $U$ is a subgroup of $G$, we use the notation $U\leq G$.
Given $g,v\in G$ and $U\leq G$, the conjugate of $g$ by $v$ is $g^v=vgv^{-1}$ and the conjugate of $U$ by $v$ is the subgroup $U^v=\{g^v\in G: g\in U\}$.
We say that $g_1,g_2\in G$ are conjugate if there exists $v\in G$ such that $g_2= g_1^v.$ The group constituted only by the neutral element is called the trivial group.

Let $n\in\mathbb{N}$. The set of bijective functions from $\ldbrack n \rdbrack$ to itself is denoted by $S_n$.  $S_n$ is a group with product defined, for every $\sigma_1,\sigma_2\in S_n$, by the composition\footnote{Given $A$, $B$ and $C$ sets and $f:A\to B$ and $g:B\to C$ functions, we denote by $gf$ the right-to-left composition of $f$ and $g$, that is, the function from $A$ to $C$ defined, for every $a\in A$, as $gf(a)=g(f(a))$.
} $\sigma_1\sigma_2\in S_n$. The neutral element of $S_n$ is given by the identity function on $\ldbrack n \rdbrack$, denoted by $id$.
$S_n$ is called the symmetric group on $\ldbrack n \rdbrack$ and its elements are called permutations.
A permutation $\sigma\in S_n\setminus\{id\}$ is the product of disjoint cycles of lengths greater than or equal to $2$ uniquely determined by $\sigma$, up to reordering, and called the {\it constituents} of $\sigma$. The {\it order reversing permutation} in $S_n$ is the permutation $\rho_0\in S_n$ defined, for every $r\in \ldbrack n \rdbrack$, as $\rho_0(r)=n-r+1$.  Obviously, we have $|\rho_0|=2$ and thus  $\Omega=\{id, \rho_0\}$ is a subgroup of $S_n$.
Note that $\rho_0$ has exactly  one fixed point when $n$ is odd and no fixed point when $n$ is even. Note also that $\Omega$ is an abelian group which admits as unique subgroups $\{id\}$ and $\Omega$.

Let $\sigma\in S_n$. Given $W\in \mathbb{P}(\ldbrack n \rdbrack)$, we denote the image of $W$ through $\sigma$ by $\sigma W$ (instead of $\sigma(W)$).
Moreover, given $\mathbb{W}\subseteq  \mathbb{P}(\ldbrack n \rdbrack)$, we define $\sigma \mathbb{W}=\{\sigma W \in \mathbb{P}(\ldbrack n \rdbrack): W\in \mathbb{W}\}$.
Note that, for every  $\sigma_1, \sigma_2\in S_n$,
$W\in \mathbb{P}(\ldbrack n \rdbrack)$ and
$\mathbb{W}\subseteq  \mathbb{P}(\ldbrack n \rdbrack)$, we have that
$(\sigma_2\sigma_1)W=\sigma_2(\sigma_1 W)$ and $(\sigma_2\sigma_1)\mathbb{W}=\sigma_2(\sigma_1 \mathbb{W})$. Thus brackets can be omitted in this type of writings.
Given $R\in\mathbf{R}(\ldbrack n \rdbrack)$, we set
\begin{equation}\label{defpro}
\sigma R=\big\{(\sigma(x),\sigma(y))\in \ldbrack n \rdbrack^2 :(x,y)\in R\big\},
\quad R\rho_0=\big\{(y,x)\in \ldbrack n \rdbrack^2 :(x,y)\in R\big\},
\end{equation}
and $R\; id=R$. In other words, for every $x,y\in \ldbrack n \rdbrack$, $x\succeq_{R}y$ if and only if $\sigma(x)\succeq_{\sigma R}\sigma(y)$; $x\succeq_{R}y$ if and only if
$y\succeq_{R\rho_0}x$.
Given $\mathbf{Q}\subseteq \mathbf{R}(\ldbrack n \rdbrack)$ and $\rho\in \Omega$, we also set
\[
\sigma \mathbf{Q}=\{\sigma R\in \mathbf{R}(\ldbrack n \rdbrack): R\in \mathbf{Q}\},
\quad
\mathbf{Q} \rho=\{R \rho\in \mathbf{R}(\ldbrack n \rdbrack): R\in  \mathbf{Q}\}.
\]
Note that, for every $\sigma_1,\sigma_2\in S_n$, $\rho_1,\rho_2\in\Omega$, $R\in \mathbf{R}(\ldbrack n \rdbrack)$ and
$\mathbf{Q}\subseteq \mathbf{R}(\ldbrack n \rdbrack)$, we have that
$(\sigma_2\sigma_1)R=\sigma_2(\sigma_1 R)$, $R(\rho_1\rho_2)=(R\rho_1)\rho_2$, $(\sigma_1 R)\rho_1=\sigma_1(R\rho_1)$ and $(\sigma_2\sigma_1)\mathbf{Q}=\sigma_2(\sigma_1 \mathbf{Q})$, $\mathbf{Q}(\rho_1\rho_2)=(\mathbf{Q}\rho_1)\rho_2$, $(\sigma_1 \mathbf{Q})\rho_1=\sigma_1(\mathbf{Q}\rho_1)$.
Those properties allow to avoid brackets when writing such kinds of products.

\section{Preference relations and preference profiles}\label{model}

From now on, let $n\in \mathbb{N}$ with $n\ge 2$ be fixed, and let $N=\ldbrack n \rdbrack$ be the set of names of alternatives.

A {\it preference relation} on $N$ is a linear order on $N$. Let $q\in \mathbf{L}(N)$ be a preference relation. If
 $x,y\in N$ are alternatives, we interpret the writing $x\succeq_{q}y$ by saying that $x$ is at least as good as $y$ (according to $q$), and the writing $x\succ_{q}y$ by saying that $x$ is preferred to $y$ (according to $q$).  Note that, since $q$ is a linear order,  $x\succ_{q}y$ is equivalent to $x\neq y$ and $x\succeq_{q}y$.
 It is well-known that there exists a unique numbering $x_1, x_2,\dots,x_n$ of the distinct elements in $N$ such that, once set $R=\{(x_i, x_{i+1})\in N^2: i\in\{1,\dots, n-1\}\}\in\mathbf{R}(N)$, we have $q\supseteq R$ and $q$ is the only linear order containing the relation $R$.
 Thus, we can completely identify $q$ with its subset $R$, which we write in the form $x_1\succ_{q} x_2 \succ_{q}\dots \succ_{q} x_n.$
We refer to $r$ as the rank of $x_r$ in $q$.
Since the map from $\ldbrack n \rdbrack$ to $\ldbrack n \rdbrack$ which associates with any $r\in  \ldbrack n \rdbrack$ the alternative $x_r$ is a bijection and thus an element of $S_n,$ we also have that $q$ is completely identified with such permutation, which we continue to call $q$. Explicitly, if $q$ is interpreted in $S_n$, then
$q(r)=x_r$ for all $r\in \ldbrack n \rdbrack.$
In this way, we have established a well-known and remarkable identification of $\mathbf{L}(N)$ with $S_n$.
Note now that if  $\psi\in S_n$, then the relations $\psi q$ and $q\rho_0$, as defined in \eqref{defpro}, are the linear orders given by
$\psi(x_1)\succ_{\psi q} \psi(x_2) \succ_{\psi q}\dots \succ_{\psi q} \psi(x_n)$
and
$
x_n\succ_{q\rho_0} x_{n-1} \succ_{q\rho_0}\dots \succ_{q\rho_0} x_1.
$
In particular, for every $\psi\in S_n$ and  $\rho\in \Omega$, $\psi q$ and $q\rho$ can be interpreted as products of permutations in the symmetric group $S_n.$ As a consequence,
 thanks to the cancellation law in $S_n$, we have that, for every $\psi_1,\psi_2\in S_n$ and $\rho_1,\rho_2\in \Omega$, $\psi_1 q=\psi_2 q$  implies $\psi_1=\psi_2$ and $q\rho_1=q\rho_2$ implies $\rho_1=\rho_2$.
Moreover, by elementary properties of the symmetric group, we have that $\psi\mathbf{L}(N)\rho=\mathbf{L}(N)$ for all $\psi\in S_n$ and  $\rho\in \Omega$.

From now on, let $h\in \mathbb{N}$ with $h\ge 2$ be fixed, and let $H=\ldbrack h \rdbrack$ be the set of names of individuals.
A {\it preference  profile} is an element of $\mathbf{L}(N)^h$. The set $\mathbf{L}(N)^h$ is denoted by $\mathcal{P}$.
If $p\in\mathcal{P}$ and $i\in H$, the $i$-th component $p_i$ of $p$  represents the preferences of individual $i$.

Let us set now
\[
G=S_h\times S_n\times \Omega.
\]
Then $G$ is a group through component-wise multiplication, that is, defining, for every $(\varphi_1,\psi_1,\rho_1)\in G$ and $(\varphi_2,\psi_2,\rho_2)\in G$, $(\varphi_1,\psi_1,\rho_1)(\varphi_2,\psi_2,\rho_2)=
(\varphi_1\varphi_2,\psi_1\psi_2,\rho_1\rho_2).$
For every $(\varphi,\psi,\rho)\in G$ and $p\in \mathcal{P}$, define $p^{(\varphi,\psi,\rho)} \in \mathcal{P}$ as the preference profile such that, for every $i\in H$,
\begin{equation}\label{def-azione}
(p^{(\varphi,\psi,\rho)})_i=\psi p_{\varphi^{-1}(i)}\rho.
\end{equation}
Thus, the preference profile $p^{(\varphi,\psi,\rho)}$ is obtained by $p$ according to the following rules (to be applied in any order): for every $i\in H$, individual $i$ is renamed $\varphi(i)$; for every $x\in N$, alternative $x$ is renamed $\psi(x)$; for every $r\in\ldbrack n \rdbrack$, alternatives whose rank is $r$ are moved to rank $\rho(r)$.

For further details and examples on these issues the reader is referred to Bubboloni and Gori (2016b, Sec. 2.2 and 2.3).

\section{Social preference and social choice correspondences}\label{scc}

A {\it social preference correspondence} ({\sc spc}) is a
correspondence from $\mathcal{P}$ to $\mathbf{L}(N)$.
The set of the {\sc spc}s is denoted by $\mathfrak{P}$. Thus, if $C\in \mathfrak{P}$ and $p\in \mathcal{P}$, then $C(p)$ is a subset of $\mathbf{L}(N)$.

From now on, let $k\in \ldbrack n-1 \rdbrack$ be fixed. A {\it $k$-multiwinner social choice correspondence} ({\sc $k$-scc})
is a correspondence from $\mathcal{P}$ to $\mathbb{P}_k(N)$.
The set of the {\sc $k$-scc}s is denoted by $\mathfrak{C}_k$. Thus, if $C\in \mathfrak{C}_k$ and $p\in \mathcal{P}$, then $C(p)$ is a set of $k$-subsets of $N$.

We say that $C\in\mathfrak{P}$ ($C\in\mathfrak{C}_k$) is {\it decisive} if, for every $p\in\mathcal{P}$, $C(p)\neq\varnothing$; {\it resolute} if, for every
$p\in\mathcal{P}$, $|C(p)|=1$. We say that $C'\in\mathfrak{P}$ ($C'\in\mathfrak{C}_k$) is a {\it refinement} of $C\in\mathfrak{P}$ ($C\in\mathfrak{C}_k$) if, for every $p\in\mathcal{P}$, $C'(p)\subseteq C(p)$. Of course, $C$ admits a resolute refinement if and only if $C$ is decisive; $C$ admits a unique resolute refinement if and only if $C$ is resolute.

Let  $U$ be a subgroup of $G$. We say that $C\in\mathfrak{P}$ is
$U$-{\it symmetric} if, for every $p\in \mathcal{P}$ and $(\varphi,\psi,\rho)\in U$,
\begin{equation}\label{symU}
C(p^{(\varphi,\psi,\rho)})=\psi C(p) \rho;
\end{equation}
we say that $C\in\mathfrak{P}$ ($C\in\mathfrak{C}_k$) is $U$-{\it consistent} if, for every $p\in \mathcal{P}$ and $(\varphi,\psi,\rho)\in U$,
\begin{equation}\label{sccU1}
C(p^{(\varphi,\psi,\rho)})=\psi C(p)\quad\mbox{ if } \rho=id,
\end{equation}
\begin{equation}\label{sccU2}
C(p^{(\varphi,\psi,\rho)})\neq\psi  C(p) \quad\mbox{ if } \rho=\rho_0 \mbox{ and } |C(p)|=1.
\end{equation}
We stress that the writings in \eqref{symU}, \eqref{sccU1} and \eqref{sccU2} are meaningful due to the definitions of products between permutations and sets (sets of sets, relations, sets of relations) and their properties described in Section \ref{plo}. The  set of $U$-symmetric
{\sc spc}s is denoted by $\mathfrak{P}^{*U}$; the set of $U$-consistent {\sc spc}s ({\sc $k$-scc}s) is denoted by $\mathfrak{P}^U$ ($\mathfrak{C}_{k}^U$).
Observe that we do not introduce the concept of $U$-symmetry for {\sc $k$-scc}s.  Note also that, if $U'\le U\le G$, then $\mathfrak{P}^{*U}\subseteq \mathfrak{P}^{*U'}$, $\mathfrak{P}^U\subseteq \mathfrak{P}^{U'}$ and $\mathfrak{C}_k^U\subseteq \mathfrak{C}_k^{U'}$. Some basic links between the concepts of symmetry and consistency are given by the following proposition whose proof  is in Appendix \ref{sec-tec}.
\begin{proposition}\label{sym-cons} Let $U\le G$. Then the following facts hold true:
\begin{itemize}
\item[$(i)$]$\mathfrak{P}^{*U}\subseteq \mathfrak{P}^{U}$.
\item[$(ii)$] If $U\le S_h\times S_n\times \{id\}$, then $\mathfrak{P}^{*U}=\mathfrak{P}^{U}$.
\end{itemize}
\end{proposition}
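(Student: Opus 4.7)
The plan is to unpack the definitions and rely on the identification of linear orders with elements of $S_n$ together with the cancellation law in that group, as recalled in Section \ref{model}.

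For part (i), I would take an arbitrary $C \in \mathfrak{C}^{*U}$, $p \in \mathcal{P}$ and $(\varphi,\psi,\rho) \in U$, so that by hypothesis $C(p^{(\varphi,\psi,\rho)}) = \psi C(p) \rho$. I would then split according to $\rho \in \Omega = \{id,\rho_0\}$. If $\rho = id$, then using $X\, id = X$ for every $X \subseteq \mathbf{L}(N)$, equation \eqref{sccU1} follows immediately. If $\rho = \rho_0$ and $C(p) = \{q\}$ is a singleton, then $\psi C(p)\rho_0 = \{\psi q \rho_0\}$ while $\psi C(p) = \{\psi q\}$, so establishing \eqref{sccU2} amounts to showing $\psi q \rho_0 \ne \psi q$. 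Interpreting $q \in \mathbf{L}(N)$ as a permutation in $S_n$, the cancellation law in $S_n$ (quoted at the end of Section \ref{model}) reduces this to $\rho_0 \ne id$, which holds because $|\rho_0| = 2$. Thus $C \in \mathfrak{C}^U$.

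For part (ii), the inclusion $\mathfrak{C}^{*U}\subseteq \mathfrak{C}^U$ is already covered by (i), so I only need the reverse inclusion. Since by hypothesis every element of $U$ has the form $(\varphi,\psi,id)$, the defining condition of $U$-symmetry in \eqref{symU} reads $C(p^{(\varphi,\psi,id)}) = \psi C(p)\, id = \psi C(p)$, which is exactly the $\rho = id$ branch \eqref{sccU1} of $U$-consistency; condition \eqref{sccU2} has no content because no element of $U$ has third component $\rho_0$. Therefore any $C \in \mathfrak{C}^U$ automatically satisfies \eqref{symU} for all elements of $U$, proving $C \in \mathfrak{C}^{*U}$.

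There is no real obstacle here: the argument is essentially an unpacking of definitions. The only point requiring any care is the strict inequality in the $\rho = \rho_0$ singleton case of (i), but this is handled cleanly by viewing linear orders as permutations and invoking cancellation in $S_n$, a tool already prepared in the preliminaries.
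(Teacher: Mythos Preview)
Your proposal is correct and follows essentially the same approach as the paper: both arguments verify \eqref{sccU1} directly from \eqref{symU} in the $\rho=id$ case, and both handle the $\rho=\rho_0$ singleton case by invoking the cancellation law in $S_n$ to reduce $\psi q\rho_0 \neq \psi q$ to $\rho_0\neq id$; for part (ii) both observe that under the hypothesis $U\le S_h\times S_n\times\{id\}$, conditions \eqref{symU} and \eqref{sccU1} coincide while \eqref{sccU2} is vacuous.
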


The concepts of symmetry and consistency with respect to a subgroup $U$ of $G$ include some classic requirements for {\sc spc}s ({\sc $k$-scc}s). Indeed, we have that $C\in\mathfrak{P}$ ($C\in\mathfrak{C}_k$) is anonymous if and only if it is $S_h\times \{id\}\times \{id\}$-consistent; $C\in\mathfrak{P}$ ($C\in\mathfrak{C}_k$) is neutral if and only if it is $\{id\}\times S_n\times \{id\}$-consistent; $C\in\mathfrak{P}$ ($C\in\mathfrak{C}_k$) is immune to the reversal bias if and only if it is $\{id\}\times \{id\}\times \Omega$-consistent; $C\in\mathfrak{P}$ is reversal symmetric if and only if it is $\{id\}\times \{id\}\times \Omega$-symmetric. Moreover, any combination of the properties above mentioned can be interpreted in terms of $U$-symmetry or $U$-consistency where the subgroup $U$ of $G$ is naturally built as described by the next propositions. Their proof is given in  Appendix \ref{sec-tec}.
In what follows, given $U_1$ and $U_2$ subgroups of $G$, we denote by $\langle U_1,U_2\rangle$ the subgroup of $G$ generated by $U_1$ and $U_2$\footnote{See Jacobson (1974, Section 1.5).}.

\begin{proposition}\label{U,V} Let $U_1,U_2\le G$. Then $\mathfrak{P}^{*U_2}\cap \mathfrak{P}^{*U_2}=\mathfrak{P}^{*\langle U_1,U_2\rangle}$.
\end{proposition}

\begin{proposition}\label{U,V-corr}
 Let $U_1,U_2\leq G$ such that, for every $i\in\{1,2\}$, $U_i=Z_i\times R_i$ for some
 $Z_i\le S_h\times S_n$ and $R_i\le \Omega$. Then
$\mathfrak{P}^{U_1}\cap \mathfrak{P}^{U_2}=\mathfrak{P}^{\langle U_1,U_2\rangle}$ and $\mathfrak{C}_k^{U_1}\cap \mathfrak{C}_k^{U_2}=\mathfrak{C}_k^{\langle U_1,U_2\rangle}$.
\end{proposition}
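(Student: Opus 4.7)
The inclusion $\mathfrak{C}^{\langle U_1,U_2\rangle}\subseteq \mathfrak{C}^{U_1}\cap \mathfrak{C}^{U_2}$ is immediate from the general monotonicity $\mathfrak{C}^U\subseteq \mathfrak{C}^{U'}$ whenever $U'\le U$, already noted in the paper; and analogously for $\mathfrak{C}_k$. So the whole task is to prove the reverse inclusion. The plan is first to understand the structure of $U:=\langle U_1,U_2\rangle$. Since $U_i=Z_i\times R_i$ as a Cartesian product with $id\in R_i$ and $id\in Z_i$, we have $Z_i\times\{id\}\subseteq U_i$ and $\{id\}\times R_i\subseteq U_i$; from this it is elementary to deduce that $U=Z\times R$, where $Z:=\langle Z_1,Z_2\rangle\le S_h\times S_n$ and $R:=\langle R_1,R_2\rangle\le\Omega$. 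This factorization is what allows us to separate the $\psi$-part from the $\rho$-part of any element of $U$.

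Now fix $C\in\mathfrak{C}^{U_1}\cap\mathfrak{C}^{U_2}$ and a generic $(\varphi,\psi,\rho)\in U$. I would handle the two cases of the definition of $U$-consistency separately. For the case $\rho=id$, i.e. condition \eqref{sccU1}: for every $z=(\varphi_0,\psi_0)\in Z_1\cup Z_2$ we have $(z,id)\in U_1\cup U_2$, so by hypothesis $C(p^{(z,id)})=\psi_0 C(p)$ for all $p\in\mathcal{P}$. I would then extend this to arbitrary $z\in Z$ by induction on the word length of $z$ in the generators $Z_1\cup Z_2$, using on the one hand the composition law $(p^{g_1})^{g_2}=p^{g_2g_1}$ (which follows directly from the definition \eqref{def-azione}, the only caveat being that $\Omega$ is abelian so $\rho_1\rho_2=\rho_2\rho_1$), and on the other hand the fact that the equation is preserved under inverses by passing from $p$ to $p^{(z,id)}$.

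The main obstacle is condition \eqref{sccU2}, where we have an inequality rather than an equality and so a direct inductive argument is unavailable. Here the direct product structure is crucial. If $\rho=\rho_0$ then $R=\Omega$, and since $R=\langle R_1,R_2\rangle$ at least one of $R_1,R_2$ must equal $\Omega$; up to relabelling assume $R_1=\Omega$, so $(id,id,\rho_0)\in U_1$. Write
\[
(\varphi,\psi,\rho_0)=(\varphi,\psi,id)\cdot(id,id,\rho_0),
\]
and set $q:=p^{(\varphi,\psi,id)}$. By the previous step $C(q)=\psi C(p)$, so in particular $|C(q)|=|C(p)|$. Assuming $|C(p)|=1$, we have $|C(q)|=1$, and since $(id,id,\rho_0)\in U_1$, condition \eqref{sccU2} for $U_1$ applied to $q$ gives $C(q^{(id,id,\rho_0)})\neq C(q)$. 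Using $p^{(\varphi,\psi,\rho_0)}=p^{(id,id,\rho_0)(\varphi,\psi,id)}=(p^{(\varphi,\psi,id)})^{(id,id,\rho_0)}=q^{(id,id,\rho_0)}$, this rewrites as $C(p^{(\varphi,\psi,\rho_0)})\neq\psi C(p)$, as required. The identical argument, with $\mathbb{P}_k(N)$ replacing $\mathbf{L}(N)$ and using the action of $S_n$ on $k$-subsets, yields the $\mathfrak{C}_k$ statement, since conditions \eqref{sccU1}--\eqref{sccU2} have the same formal shape in both settings and the factor $\rho$ never appears on the right-hand side.
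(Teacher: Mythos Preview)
Your proof is correct and follows essentially the same route as the paper's: both reduce to an induction for the $\rho=id$ case and, for $\rho=\rho_0$, factor off $(id,id,\rho_0)$ (which must lie in one of the $U_i$) and invoke \eqref{sccU2} there. Your explicit identification $\langle U_1,U_2\rangle=\langle Z_1,Z_2\rangle\times\langle R_1,R_2\rangle$ is a clean way to phrase what the paper encodes via its ``bar'' map $g\mapsto\overline g$ and property~(B); the paper also applies $(id,id,\rho_0)$ to $p$ first and then the $(\varphi,\psi,id)$ part, whereas you do the opposite, but since the two factors commute this is immaterial.
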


In particular, $C\in\mathfrak{P}$  is anonymous, neutral and reversal symmetric if and only if $C$ is $G$-symmetric; $C\in\mathfrak{P}$ ($C\in\mathfrak{C}_k$) is anonymous, neutral and immune to the reversal bias if and only if $C$ is $G$-consistent. Classical  {\sc spc}s provide examples of
{\sc spc}s which are $G$-symmetric or at least $S_h\times S_n\times \{id\}$-symmetric. For instance, the Borda and the Copeland {\sc spc}s are $G$-symmetric  while the Minimax {\sc spc} is generally only $S_h\times S_n\times \{id\}$-symmetric.\footnote{See Bubboloni and Gori (2016b).}
Note also that the {\sc spc}  associating with every $p\in \mathcal{P}$ the set $\mathbf{L}(N)$ is surely
$G$-symmetric. Such a {\sc spc} is called the  trivial {\sc spc}.

We finally observe that there is a natural way to construct a {\sc $k$-scc} starting from a  {\sc spc}.
Indeed, given $C\in\mathfrak{P}$, we  consider the {\sc $k$-scc} $C_k$ defined, for every $p\in\mathcal{P}$, by
\[
C_k(p)=\Big\{\{q(r):1\leq r\le k\}\in \mathbb{P}_k(N): q\in C(p)\Big\}.
\]
Note that $\{q(r):1\leq r\le k\}$ is the set of alternatives ranked by $q$ in the first $k$ positions.
$C_k$ is called the  {\sc $k$-scc} {\it induced} by $C$.
Of course, given $C\in\mathfrak{P}$, $C$ is decisive if and only if  $C_k$ is decisive; if $C$ is resolute, then $C_k$ is resolute.
Moreover if $C'\in\mathfrak{P}$ is a refinement of $C$, then $C'_k$ is a refinement of $C_k.$  The following proposition, whose proof  is in Appendix \ref{sec-tec}, expresses the main basic property of the induced {\sc $k$-scc} with respect to symmetry.

\begin{proposition}\label{sym-cons2} Let $U\le G$.
If $C\in \mathfrak{P}^{*U}$, then $C_k\in\mathfrak{C}_k^{U}$.
\end{proposition}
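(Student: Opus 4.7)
The plan is to fix $(\varphi,\psi,\rho)\in U$ and $p\in\mathcal{P}$, then use the $U$-symmetry of $C$, which gives $C(p^{(\varphi,\psi,\rho)})=\psi C(p)\rho$, to unpack $C_k(p^{(\varphi,\psi,\rho)})$ via its definition and reduce everything to two elementary identities about the maximum-set operator $M_k$ applied to linear orders:
\begin{equation*}
M_k(\psi q)=\psi M_k(q)\qquad\text{and}\qquad M_k(q\rho_0)=N\setminus M_k(q),
\end{equation*}
valid for all $q\in \mathbf{L}(N)$ and $\psi\in S_n$. Both identities follow directly from the identification of $\mathbf{L}(N)$ with $S_n$ described in Section \ref{model}: indeed $M_k(q)=\{q(r):1\le r\le k\}$, so $M_k(\psi q)=\{\psi q(r):1\le r\le k\}=\psi M_k(q)$, while $q\rho_0(r)=q(n-r+1)$ gives $M_k(q\rho_0)=\{q(r):n-k+1\le r\le n\}=N\setminus M_k(q)$.

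First I treat the case $\rho=id$. From the definition of $C_k$ and the identity $M_k(\psi q)=\psi M_k(q)$,
\begin{equation*}
C_k(p^{(\varphi,\psi,id)})=\{M_k(\psi q):q\in C(p)\}=\{\psi M_k(q):q\in C(p)\}=\psi C_k(p),
\end{equation*}
which is exactly \eqref{sccU1} for $C_k$.

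Next I treat the case $\rho=\rho_0$, assuming $|C_k(p)|=1$, say $C_k(p)=\{W\}$ with $W\in\mathbb{P}_k(N)$. Using the second identity,
\begin{equation*}
C_k(p^{(\varphi,\psi,\rho_0)})=\{M_k(\psi q\rho_0):q\in C(p)\}=\{N\setminus\psi M_k(q):q\in C(p)\}=\{N\setminus \psi W\},
\end{equation*}
whereas $\psi C_k(p)=\{\psi W\}$. Since $k\in\ldbrack n-1\rdbrack$ forces $1\le k\le n-1$, the sets $\psi W$ and $N\setminus \psi W$ are disjoint and both nonempty, hence distinct; therefore $C_k(p^{(\varphi,\psi,\rho_0)})\ne \psi C_k(p)$, which is exactly \eqref{sccU2} for $C_k$. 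This closes the proof.

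There is no real obstacle: the whole argument is a direct unpacking of definitions, with the only substantive point being the disjointness step that rules out $\psi W=N\setminus\psi W$, which is why the restriction $k\le n-1$ in the definition of a {\sc $k$-scc} is exactly what is needed.
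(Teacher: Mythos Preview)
Your proof is correct and follows essentially the same approach as the paper: both verify \eqref{sccU1} via $M_k(\psi q)=\psi M_k(q)$ and handle \eqref{sccU2} by analysing $M_k(\psi q\rho_0)$. Your treatment of the $\rho=\rho_0$ case is in fact slightly cleaner than the paper's, since you compute $C_k(p^{(\varphi,\psi,\rho_0)})=\{N\setminus\psi W\}$ directly, whereas the paper argues by contradiction using only that the top element $\psi q(n)$ of each $\psi q\rho_0$ lies outside $\psi W$.
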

In particular, the {\sc $k$-scc}s induced by the Borda, the Copeland and the trivial {\sc spc}s are $G$-consistent,  while the Minimax {\sc $k$-scc} is $S_h\times S_n\times \{id\}$-consistent. Note also that the {\sc $k$-scc} induced by the trivial {\sc spc}, associates with every $p\in \mathcal{P}$ the set of all the possible $k$-subsets of $N$.

\subsection{Social methods}

Let us introduce now a final new concept which will be very important in the sequel.
A {\it social method} is a function from $\mathcal{P}$ to $\mathbf{R}(N)$. The set of social methods is denoted by $\mathfrak{M}$. Let
$R\in \mathfrak{M}$. We say that $R$ is acyclic (transitive, complete etc.) if, for every $p\in \mathcal{P},$ the relation $R(p)$ is acyclic (transitive, complete etc.).
The {\sc spc} associated with $R$, denoted by $C^R$, is defined, for every $p\in \mathcal{P}$, by
\[
C^R(p)=\{q\in\mathbf{L}(N): R(p)\subseteq q\}.
\]
Note that, by the well-known Szpilrajn's extension theorem, $C^R$ is decisive if and only if $R$ is acyclic.

Given  $U\leq G$, we say that $R$ is $U$-{\it symmetric} if, for every $p\in \mathcal{P}$ and $(\varphi,\psi,\rho)\in U$,
\[
R(p^{(\varphi,\psi,\rho)})=\psi R(p) \rho.
\]
We stress that the above writing is meaningful due to the definitions of products between permutations and relations described in Section \ref{plo}.
The set of $U$-symmetric social methods is denoted by $\mathfrak{M}^{*U}.$

Note that if $U'\leq U$ and $R$ is $U$-symmetric, then $R$ is also $U'$-symmetric.
Moreover, as proved in Appendix \ref{sec-tec}, the next propositions hold true.

\begin{proposition}\label{sym-rel} Let $U\le G$. If $R\in \mathfrak{M}^{*U}$, then $C^R\in \mathfrak{P}^{*U}$.
\end{proposition}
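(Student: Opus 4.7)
The plan is to verify the defining identity for $U$-symmetry of $C^R$ by a direct set-theoretic computation that pushes the $U$-symmetry of $R$ through the definition $C^R(p)=\{q\in\mathbf{L}(N): R(p)\subseteq q\}$. The backbone of the argument is the observation that the action of $S_n\times\Omega$ on $\mathbf{R}(N)$ is bijective on each side: for any $\sigma\in S_n$ the map $A\mapsto\sigma A$ on $\mathbf{R}(N)=\mathbb{P}(N^2)$ is induced by the bijection $(x,y)\mapsto(\sigma(x),\sigma(y))$ on $N^2$, so its inverse is $A\mapsto\sigma^{-1}A$; similarly $A\mapsto A\rho$ for $\rho\in\Omega$ is an involution since $\rho_0^2=id$. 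As bijections on $\mathbb{P}(N^2)$, these maps preserve and reflect the subset relation. I will also use the identity $\psi\mathbf{L}(N)\rho=\mathbf{L}(N)$ recalled in Section~\ref{model}.

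Fix $p\in\mathcal{P}$ and $(\varphi,\psi,\rho)\in U$. For the inclusion $\psi C^R(p)\rho\subseteq C^R(p^{(\varphi,\psi,\rho)})$, take $q\in C^R(p)$, so $q\in\mathbf{L}(N)$ and $R(p)\subseteq q$. Applying the bijections associated with $\psi$ and $\rho$ gives $\psi R(p)\rho\subseteq\psi q\rho$. Since $R\in\mathfrak{M}^{*U}$, the left-hand side equals $R(p^{(\varphi,\psi,\rho)})$, and $\psi q\rho\in\psi\mathbf{L}(N)\rho=\mathbf{L}(N)$. Hence $\psi q\rho\in C^R(p^{(\varphi,\psi,\rho)})$.

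For the reverse inclusion, take $q'\in C^R(p^{(\varphi,\psi,\rho)})$. Set $q=\psi^{-1}q'\rho^{-1}$; then $q\in\psi^{-1}\mathbf{L}(N)\rho^{-1}=\mathbf{L}(N)$ and $\psi q\rho=q'$ by the associativity-type identities recorded at the end of Section~\ref{plo}. From $R(p^{(\varphi,\psi,\rho)})\subseteq q'$ and $R(p^{(\varphi,\psi,\rho)})=\psi R(p)\rho$, applying the inverse bijections $\psi^{-1}(\cdot)\rho^{-1}$ yields $R(p)\subseteq q$, so $q\in C^R(p)$ and $q'=\psi q\rho\in\psi C^R(p)\rho$.

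No step looks genuinely hard; the only point that requires a line of justification is the fact that the $S_n\times\Omega$-action on $\mathbf{R}(N)$ preserves and reflects $\subseteq$, which I will isolate at the start of the proof so the two inclusions above become one-line verifications. Conversely, the fact that $\mathbf{L}(N)$ is invariant under the action is already at hand from Section~\ref{model}, so no additional lemma is needed.
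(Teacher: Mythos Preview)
Your proof is correct and follows essentially the same approach as the paper: both argue the two inclusions by using that the maps $A\mapsto\psi A\rho$ on $\mathbf{R}(N)$ are bijections preserving $\subseteq$, together with $\psi\mathbf{L}(N)\rho=\mathbf{L}(N)$ and the $U$-symmetry of $R$. The only cosmetic difference is in the reverse inclusion: you compute directly with $q=\psi^{-1}q'\rho^{-1}$, while the paper instead applies the already-proved forward inclusion to the inverse group element $(\varphi^{-1},\psi^{-1},\rho)\in U$ and the profile $\overline{p}=p^{(\varphi,\psi,\rho)}$, avoiding a second explicit computation.
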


\begin{proposition}\label{U,V-method} Let  $U_1,U_2\le G.$ Then $\mathfrak{M}^{*U_1}\cap \mathfrak{M}^{*U_2}=\mathfrak{M}^{*\langle U_1,U_2\rangle}.$
\end{proposition}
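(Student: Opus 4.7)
The strategy is to introduce, for every $R\in\mathfrak{M}$, its \emph{symmetry set}
\[
\mathrm{Sym}(R)=\{(\varphi,\psi,\rho)\in G : R(p^{(\varphi,\psi,\rho)})=\psi R(p)\rho \text{ for every } p\in\mathcal{P}\},
\]
and to show that $\mathrm{Sym}(R)$ is a subgroup of $G$. Once this is established, the proposition follows at once: by the very definitions, $R\in\mathfrak{M}^{*U}$ is equivalent to $U\subseteq \mathrm{Sym}(R)$, so $R\in \mathfrak{M}^{*U_1}\cap\mathfrak{M}^{*U_2}$ is equivalent to $U_1\cup U_2\subseteq \mathrm{Sym}(R)$, which, since $\mathrm{Sym}(R)$ is a subgroup, coincides with $\langle U_1,U_2\rangle\subseteq \mathrm{Sym}(R)$, that is, with $R\in \mathfrak{M}^{*\langle U_1,U_2\rangle}$. (Alternatively, $\mathfrak{M}^{*\langle U_1,U_2\rangle}\subseteq \mathfrak{M}^{*U_1}\cap \mathfrak{M}^{*U_2}$ is a direct consequence of the monotonicity $U'\le U\Rightarrow \mathfrak{M}^{*U}\subseteq\mathfrak{M}^{*U'}$ noted before the statement.)

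To verify the subgroup property I would first record the action identity $p^{g_2g_1}=(p^{g_1})^{g_2}$ for all $g_1,g_2\in G$ and $p\in\mathcal{P}$, which is obtained by unfolding \eqref{def-azione} together with $\varphi_1^{-1}\varphi_2^{-1}=(\varphi_2\varphi_1)^{-1}$ and the commutativity of $\Omega$. The neutral element $(id,id,id)$ manifestly belongs to $\mathrm{Sym}(R)$. For closure under products, given $g_i=(\varphi_i,\psi_i,\rho_i)\in \mathrm{Sym}(R)$ for $i=1,2$, combining the action identity with the rules for products of permutations with relations recalled in Section \ref{plo} gives
\[
R(p^{g_2g_1})=R((p^{g_1})^{g_2})=\psi_2R(p^{g_1})\rho_2=(\psi_2\psi_1)R(p)(\rho_1\rho_2)=(\psi_2\psi_1)R(p)(\rho_2\rho_1),
\]
which is exactly the symmetry condition for $g_2g_1=(\varphi_2\varphi_1,\psi_2\psi_1,\rho_2\rho_1)$ (the last equality uses once more $\rho_1\rho_2=\rho_2\rho_1$). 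For closure under inverses, given $g=(\varphi,\psi,\rho)\in\mathrm{Sym}(R)$, the symmetry equality applied to $p^{g^{-1}}$ in place of $p$, together with $(p^{g^{-1}})^g=p$, yields $R(p)=\psi R(p^{g^{-1}})\rho$, whence $R(p^{g^{-1}})=\psi^{-1}R(p)\rho^{-1}$, proving $g^{-1}\in \mathrm{Sym}(R)$.

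No serious obstacle is anticipated; the only point that requires care is the bookkeeping of the multiplication order in $G$ versus the order in which permutations act on $\mathcal{P}$ and on $\mathbf{R}(N)$. The commutativity of $\Omega$ is essential here, because it lets the right-hand $\rho$-factors---which naturally accumulate in the reverse order under iterated application of the action---match the order prescribed by the component-wise product on $G$.
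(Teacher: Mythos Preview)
Your proof is correct and follows essentially the same approach as the paper: define the symmetry set (the paper calls it $W$), show it is a subgroup of $G$, and conclude that it must contain $\langle U_1,U_2\rangle$. The only minor difference is that you verify identity and inverses explicitly, whereas the paper checks only closure under products (which suffices in a finite group).
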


As an example, consider the social method $R$ defined, for every $p\in \mathcal{P}$, by
\[
R(p)=\{(x,y)\in N^2:\forall i\in H,\, x\succ_{p_i} y\},
\]
and note that $R\in \mathfrak{M}^{*G}$.
The {\sc spc} associated with is called the Pareto {\sc spc} and, as well-known, is decisive. The $k$-{\sc scc} induced by the Pareto {\sc spc} is called the Pareto $k$-{\sc scc}.
By Proposition \ref{sym-rel}, the Pareto {\sc spc} is $G$-symmetric and, by Proposition \ref{sym-cons2},
the Pareto $k$-{\sc scc} is $G$-consistent.

\section{Main results}\label{maingen}

Given a {\sc spc} or a {\sc $k$-scc}, our main purpose is to find a resolute refinement of it satisfying suitable symmetry and consistency properties. More precisely, we try to find an answer to the next three questions:
\begin{itemize}
\item Given a {\sc spc} $C$ and $U\le G$, can we find a resolute refinement of $C$ which is $U$-symmetric?
\item Given a {\sc spc} $C$ and $U\le G$, can we find a resolute refinement of $C$ which is $U$-consistent?
\item Given a $k$-{\sc scc} $C$ and $U\le G$, can we find a resolute refinement of $C$ which is $U$-consistent?
\end{itemize}
Indeed, for each of the above questions, we provide conditions on $C$ and $U$ which allow to give an affirmative answer.
We emphasize that, assuming $C$ to be trivial, the three questions above are about the existence of resolute $U$-symmetric {\sc spc}s, $U$-consistent {\sc spc}s and $U$-consistent {\sc $k$-scc}s.

In order to describe our main results, we need to recall the crucial concept of regular subgroup of $G$, introduced in
Bubboloni and Gori (2015).
A subgroup $U$  of $G$ is said to be {\it regular} if, for every $p\in\mathcal{P}$, there exists $\psi_*\in S_n$  conjugate
to $\rho_0$ such that
\begin{equation} \label{reg-def}
\mathrm{Stab}_U(p)\subseteq \left(S_h\times \{id\}\times \{id\}\right)\cup \left( S_h\times \{\psi_*\}\times \{\rho_0\}\right).
\end{equation}
where $\mathrm{Stab}_U(p)=\{(\varphi,\psi,\rho)\in U: p^{(\varphi,\psi,\rho)}=p\}$.

We also recall the fundamental result about those groups proved in Bubboloni and Gori (2015, Theorem 7):
\begin{equation} \label{teorema 7} \hbox{ There exists a } U\hbox{-symmetric resolute {\sc spc} if and only if } U \hbox{ is regular.}
\end{equation}
Thus, if we want to focus on $U$-symmetric resolute refinements of a given {\sc spc}, we necessarily have to assume $U$  regular.
The next result follows from some technical adjustments of the proof of Theorem $11$ in Bubboloni and Gori (2015). Its proof is proposed in Appendix \ref{app-C}.

\begin{theorem}\label{main3}
Let $U$ be a regular subgroup of $G$ and $C$ be a decisive and $U$-symmetric {\sc spc}. Then the three following conditions are equivalent:
\begin{itemize}
\item[$(i)$] $C$ admits a $U$-symmetric resolute refinement;
\item[$(ii)$] there exists an irreflexive and acyclic $U$-symmetric social method $R$ such that $C^R$ refines $C$;
\item[$(iii)$] there exists an irreflexive and acyclic social method $R$ such that $C^R$ refines $C$ and the following condition is satisfied:
\begin{itemize}
\item[$(a)$] for every $p\in\mathcal{P}$, $x,y\in N$ and $(\varphi,\psi,\rho_0)\in  \mathrm{Stab}_U(p)$, we have that $(x,y)\in R(p)$ if and only if $(\psi(y),\psi(x))\in R(p)$.
\end{itemize}
\end{itemize}
\end{theorem}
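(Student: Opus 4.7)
I would prove the equivalence via the cycle $(\text{i}) \Rightarrow (\text{ii}) \Rightarrow (\text{iii}) \Rightarrow (\text{i})$. The first two implications are fairly direct; the third is where the regularity hypothesis enters and where all the technical work lies.

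For $(\text{i}) \Rightarrow (\text{ii})$: let $C'$ be a $U$-symmetric resolute refinement of $C$, so that for each $p$ we can write $C'(p)=\{q_p\}$ with $q_p\in \mathbf{L}(N)$. Define the social method $R$ by letting $R(p)$ be the strict part of $q_p$, i.e.\ $q_p$ minus the diagonal of $N^2$. Then $R(p)$ is irreflexive and transitive, hence acyclic; moreover a linear order is uniquely recovered from its strict part, so $C^R(p)=\{q_p\}=C'(p)\subseteq C(p)$. $U$-symmetry of $R$ follows from that of $C'$, since for any $\psi\in S_n$ and $\rho\in\Omega$ the product $\psi(\cdot)\rho$ preserves the diagonal of $N^2$. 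For $(\text{ii}) \Rightarrow (\text{iii})$: take any $R$ as in (ii) and any $(\varphi,\psi,\rho_0)\in\mathrm{Stab}_U(p)$. Then $R(p)=R(p^{(\varphi,\psi,\rho_0)})=\psi R(p)\rho_0$, and unpacking the definition of $\psi R(p)\rho_0$ yields exactly the biconditional in (a).

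For $(\text{iii}) \Rightarrow (\text{i})$, the heart of the argument: fix $R$ as in (iii), decompose $\mathcal{P}$ into $U$-orbits, and for each orbit pick a representative $p_0$. I aim to choose, for each such $p_0$, a linear order $q_0\in C^R(p_0)$ that is invariant under the stabilizer action, in the sense that $\psi q_0 \rho = q_0$ for every $(\varphi,\psi,\rho)\in\mathrm{Stab}_U(p_0)$. Regularity of $U$ gives $\psi_*\in S_n$ conjugate to $\rho_0$ with $\mathrm{Stab}_U(p_0)\subseteq (S_h\times\{id\}\times\{id\})\cup(S_h\times\{\psi_*\}\times\{\rho_0\})$. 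Elements of the first component act trivially on linear orders, so the only nontrivial constraint is $\psi_* q_0 \rho_0 = q_0$, which says that the alternatives occupying positions $r$ and $n-r+1$ in $q_0$ are swapped by the involution $\psi_*$. By (a) applied to any such stabilizer element, $R(p_0)=\psi_* R(p_0)\rho_0$, i.e.\ $R(p_0)$ is compatible with the symmetry we are asking $q_0$ to exhibit. Once $q_0$ is found, define $C'(p)=\{\psi q_0\rho\}$ whenever $p=p_0^{(\varphi,\psi,\rho)}$; well-definedness follows because two representations of the same $p$ differ by an element of $\mathrm{Stab}_U(p_0)$ fixing $q_0$, and $U$-symmetry of $C'$ is then built into the construction. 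Moreover $q_0\in C^R(p_0)\subseteq C(p_0)$ by the refinement hypothesis in (iii), and $U$-symmetry of $C$ propagates this to every $p$, so $C'$ refines $C$.

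The main obstacle is the technical existence lemma underlying the choice of $q_0$: given an irreflexive acyclic relation $\widetilde R$ on $N$ satisfying $\widetilde R=\psi_*\widetilde R\rho_0$ for some $\psi_*\in S_n$ conjugate to $\rho_0$, one must produce a linear extension $q$ of $\widetilde R$ with $\psi_* q\rho_0=q$. This is essentially the content of Theorem~11 of Bubboloni and Gori (2015), the difference being that here one must stay inside $C^R(p_0)$ (i.e.\ extend the given $\widetilde R=R(p_0)$) rather than start from the empty relation. I expect to handle this by a Szpilrajn-style inductive argument in which, at each step, one adds a symmetric pair of comparisons $\{(x,y),(\psi_*(y),\psi_*(x))\}$ that is consistent with the current partial order, using acyclicity to guarantee consistency and the cycle structure of $\psi_*$ (no fixed points for $n$ even, exactly one for $n$ odd, matched to the middle position) to handle the inductive base and the placement of the fixed point.
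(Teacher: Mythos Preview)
Your cycle $(\text{i})\Rightarrow(\text{ii})\Rightarrow(\text{iii})\Rightarrow(\text{i})$ and the arguments for the first two implications coincide with the paper's proof (removing the diagonal from the resolute refinement, then reading off condition~(a) from $U$-symmetry of $R$). For $(\text{iii})\Rightarrow(\text{i})$ you also identify exactly the right reduction: it suffices, for each orbit representative $p_0$, to find a linear extension $q_0$ of $R(p_0)$ with $\psi_* q_0\rho_0=q_0$, and then propagate along the orbit; this is precisely the paper's Theorem~\ref{main} together with Proposition~\ref{existence}.

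The only real divergence is in how you propose to produce $q_0$. The paper does \emph{not} run a Szpilrajn-style induction. Instead it passes to the transitive closure $R^*(p_0)$, defines the set $\Gamma$ of alternatives that dominate both members of some $\psi_*$-orbit, shows $\Gamma\cap\psi_*(\Gamma)=\varnothing$, and uses this to pick one representative from each $2$-orbit into a set $M$; a linear extension of $R(p_0)$ restricted to $M$ then yields $q_0$ directly via the ``mirror'' pattern $a_1\succ\cdots\succ a_{\lfloor n/2\rfloor}\ (\succ\hat x_r)\ \succ\psi_*(a_{\lfloor n/2\rfloor})\succ\cdots\succ\psi_*(a_1)$. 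The verification that $q_0\supseteq R(p_0)$ is a case analysis (their cases (a)--(h)).

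Your inductive route is viable, but your sketch underplays the one genuine issue: at each step you add \emph{two} edges $(x,y)$ and $(\psi_*(y),\psi_*(x))$ simultaneously, and ordinary Szpilrajn only tells you that adding one preserves acyclicity. You need to argue that for incomparable $x,y$, at least one of the two symmetric choices keeps the relation acyclic. This is true --- if both choices created a cycle, a short case analysis using $R=\psi_*R\rho_0$ forces an $R$-path from $\psi_*(x)$ to $x$ and another from $x$ to $\psi_*(x)$, contradicting acyclicity --- but it is a lemma you have to state and prove, not something ``acyclicity guarantees'' for free. The paper's direct construction sidesteps this bookkeeping at the cost of the $\Gamma$-analysis; your approach is arguably more conceptual but needs that extra lemma made explicit.
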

Note that, for every $U\le G$, there are decisive and $U$-symmetric {\sc spc}s. Indeed, the trivial, the Pareto, the Borda and the Kemeny  {\sc spc}s are decisive and $G$-symmetric so that they are $U$-symmetric too.

\begin{corollary}\label{corollary-newnew}
Let $U$ be a regular subgroup of $G$ with $U\le S_h\times S_n\times \{id\}$ and $C$ be a decisive and $U$-symmetric {\sc spc}. Then $C$ admits a $U$-symmetric resolute refinement.
\end{corollary}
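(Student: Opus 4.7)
The plan is to apply Theorem \ref{main3} directly, exploiting the hypothesis $U \le S_h \times S_n \times \{id\}$ to make the technical clause (a) vacuous.

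First, I would observe that every element of $U$, and hence every element of $\mathrm{Stab}_U(p)$ for every $p \in \mathcal{P}$, has third component equal to $id$. Since $\rho_0 \neq id$, no triple of the form $(\varphi, \psi, \rho_0)$ can belong to $\mathrm{Stab}_U(p)$. Consequently the biconditional in condition (a) of Theorem \ref{main3}(iii) holds vacuously for \emph{every} social method $R$. So to produce a $U$-symmetric resolute refinement of $C$ it suffices, by the equivalence (i) $\Leftrightarrow$ (iii) of Theorem \ref{main3}, to exhibit a single irreflexive and acyclic social method $R$ with $C^R$ refining $C$.

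To construct such an $R$, I would use decisiveness of $C$: for each $p \in \mathcal{P}$, pick $q_p \in C(p)$ and set
\[
R(p) = q_p \setminus \{(x,x) : x \in N\},
\]
namely the strict part of the linear order $q_p$. Then $R(p)$ is irreflexive by construction, and transitive (hence acyclic) because $q_p$ is both transitive and antisymmetric: if $(x,y),(y,z) \in R(p)$ with $x \neq y$ and $y \neq z$, transitivity of $q_p$ gives $(x,z) \in q_p$, and antisymmetry rules out $x=z$, so $(x,z) \in R(p)$. For any $q \in \mathbf{L}(N)$ with $R(p) \subseteq q$, reflexivity of $q$ yields $q_p = R(p) \cup \{(x,x): x \in N\} \subseteq q$; since both $q_p$ and $q$ are linear orders on the finite set $N$, they have the same cardinality, so $q = q_p$. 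Hence $C^R(p) = \{q_p\} \subseteq C(p)$, showing that $C^R$ refines $C$.

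With $R$ irreflexive, acyclic, $C^R$ refining $C$, and condition (a) vacuous, the implication (iii) $\Rightarrow$ (i) of Theorem \ref{main3} delivers the $U$-symmetric resolute refinement of $C$. There is essentially no obstacle here: the content of the corollary is the observation that the reversal-related clause (a) disappears in the absence of $\rho_0$ in $U$, after which the usual strict-part construction from any (pointwise) selection inside $C$ does the job.
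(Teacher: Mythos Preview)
Your proof is correct and follows essentially the same approach as the paper's: both pick $q_p\in C(p)$ by decisiveness, set $R(p)=q_p\setminus\Delta$, verify that $R$ is irreflexive and acyclic with $C^R(p)=\{q_p\}\subseteq C(p)$, observe that condition~(a) is vacuous since $U\le S_h\times S_n\times\{id\}$, and invoke Theorem~\ref{main3}. Your write-up merely supplies a bit more detail on acyclicity and on why $C^R(p)$ reduces to $\{q_p\}$.
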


\begin{proof}
For every $p\in\mathcal{P}$, we have $C(p)\neq\varnothing$. Pick $q_p\in C(p)$ and define $R:\mathcal{P}\to \mathbf{R}(N)$ setting, for every $p\in \mathcal{P}$, $R(p)=q_p\setminus \Delta$, where $\Delta=\{(x,x): x\in N\}.$
Then $R$ is an irreflexive and acyclic social method. Moreover, $C^R(p)=\{q\in\mathbf{L}(N): q_p\setminus \Delta\subseteq q\}=\{q_p\}$, so that $C^R$ refines $C$. Since $(a)$ trivially holds, we have that  condition (iii) in Theorem \ref{main3} is satisfied. Therefore, by Theorem \ref{main3}, $C$ admits a $U$-symmetric resolute refinement.
\end{proof}

The concept of regular group works properly even to analyse consistency.
Theorems \ref{main2}  below is proved in Appendix \ref{app-C} by widely extending the theory developed in Bubboloni and Gori (2016b) to the framework here considered.

\begin{theorem}\label{main2}
Let $U$ be a regular subgroup of $G$  and $C$ be a decisive and $U$-consistent {\sc spc}. Then $C$ admits a $U$-consistent resolute refinement.
\end{theorem}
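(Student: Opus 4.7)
The strategy is to build $C'$ orbit by orbit under the action of $U$ on $\mathcal{P}$, using regularity to ensure that every compatibility constraint is either automatic or reducible to a single explicit choice at a chosen representative.

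Set $U^+:=U\cap (S_h\times S_n\times\{id\})$. If $U=U^+$, then $U\le S_h\times S_n\times\{id\}$, and Proposition \ref{sym-cons}(ii) gives $\mathfrak{C}^{*U}=\mathfrak{C}^{U}$; hence $C$ is $U$-symmetric and Corollary \ref{corollary-newnew} supplies a $U$-symmetric (hence $U$-consistent) resolute refinement. From now on assume $U\neq U^+$, so $[U:U^+]=2$, and fix $g_0=(\varphi_0,\psi_0,\rho_0)\in U\setminus U^+$. For a $U$-orbit representative $\bar p$, two cases occur: the \emph{paired} case, when $\bar p$ and $\bar p^{g_0}$ lie in distinct $U^+$-orbits---equivalently, $\mathrm{Stab}_U(\bar p)\subseteq U^+$, and then by regularity $\mathrm{Stab}_U(\bar p)\subseteq S_h\times\{id\}\times\{id\}$---and the \emph{self-paired} case, when $\mathrm{Stab}_U(\bar p)\cap (U\setminus U^+)\neq\varnothing$, in which case regularity forces all such stabilizer elements to share a common $S_n$-component $\psi_*$ (depending on $\bar p$) conjugate to $\rho_0$.

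In the self-paired case choose $q_{\bar p}\in C(\bar p)$ arbitrarily (possible by decisiveness). In the paired case choose $q_{\bar p}\in C(\bar p)$ and $q_{\bar p^{g_0}}\in C(\bar p^{g_0})$ satisfying $q_{\bar p^{g_0}}\neq\psi_0 q_{\bar p}$: if $|C(\bar p)|=1$, condition \eqref{sccU2} applied at $(\bar p,g_0)$ gives $C(\bar p^{g_0})\neq\psi_0 C(\bar p)$, and one picks $q_{\bar p^{g_0}}\in C(\bar p^{g_0})\setminus\{\psi_0 q_{\bar p}\}$; if $|C(\bar p)|\ge 2$, one first fixes any $q_{\bar p^{g_0}}\in C(\bar p^{g_0})$ and then picks $q_{\bar p}\in C(\bar p)\setminus\{\psi_0^{-1}q_{\bar p^{g_0}}\}$. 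Propagate these choices along the $U^+$-orbits by the rule $q_{\bar p^h}:=\psi_h q_{\bar p}$ for every $h=(\varphi_h,\psi_h,id)\in U^+$ (and analogously from $\bar p^{g_0}$ in the paired case), and set $C'(p):=\{q_p\}$ for every $p\in\mathcal{P}$. Since $q_p\in C(p)$ by construction, $C'$ is a resolute refinement of $C$.

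The verification that $C'$ satisfies \eqref{sccU1} and \eqref{sccU2} uses repeatedly the right-action identity $(p^{g_2})^{g_1}=p^{g_1g_2}$. Well-definedness of the propagation rule follows from regularity: if $\bar p^{h_1}=\bar p^{h_2}$ with $h_1,h_2\in U^+$, then $h_1^{-1}h_2\in\mathrm{Stab}_U(\bar p)\cap U^+\subseteq S_h\times\{id\}\times\{id\}$, forcing $\psi_{h_1}=\psi_{h_2}$; condition \eqref{sccU1} is then immediate. The main obstacle is \eqref{sccU2}: one must verify $q_{p^g}\neq\psi q_p$ for every $p\in\mathcal{P}$ and every $g=(\varphi,\psi,\rho_0)\in U$, which is a priori an unbounded family of inequalities. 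After substituting the propagation rule and using regularity (trivial $S_n$-component on $\mathrm{Stab}_U(\bar p)\cap U^+$, common value $\psi_*$ on $\mathrm{Stab}_U(\bar p)\cap U^-$), each such inequality collapses to a single orbit-level statement: in the self-paired case, to $q_{\bar p}\neq\psi_* q_{\bar p}$, which holds by left-cancellation in $S_n$ since $\psi_*\neq id$; in the paired case, to $q_{\bar p^{g_0}}\neq\psi_0 q_{\bar p}$, which we arranged by hand. This collapse, entirely driven by the restriction on stabilizers imposed by regularity, is the crux of the argument.
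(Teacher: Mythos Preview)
Your proof is correct and follows essentially the same approach as the paper. The paper packages the construction into the general framework of the sets $A^1_C(p^j)$ and $A^2_C(p^j)$ (Proposition \ref{fu-min-ex} and Theorem \ref{fu-min-count2}) and then observes that for {\sc spc}s the self-paired condition $\psi_j x\neq x$ is automatic by cancellation in $S_n$; you carry out the same orbit-by-orbit construction directly, with your paired/self-paired dichotomy matching the paper's $\mathcal{P}^U_1$/$\mathcal{P}^U_2$ split and the same cancellation argument handling the self-paired case.
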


Theorems  \ref{main1} below, also proved in Appendix \ref{app-C},  is a largely unexpected result. It establishes a deep link between the number $n$ of alternatives and the number $k$ of winners to be selected in order to make each decisive and $U$-consistent {\sc $k$-scc} admit  a $U$-consistent resolute refinement. It points out that there is a substantial difference between {\sc spc}s and {\sc $k$-scc}s with respect to the existence of $U$-consistent resolute refinements. Indeed, while for {\sc spc}s such existence is guaranteed for every $U$-consistent {\sc spc} $C$ once $U$ is regular (Theorem \ref{main2}), for  {\sc $k$-scc}s one has to require the fulfilment of further conditions in addition to the regularity of $U$.

\begin{theorem}\label{main1}
Let $U$ be a regular subgroup of $G$. Then the two following facts are equivalent:
\begin{itemize}
	\item[$(i)$] every decisive and $U$-consistent {\sc $k$-scc} admits a $U$-consistent resolute refinement;
	\item[$(ii)$] one of the following conditions is satisfied:
	\begin{itemize}
		\item[$(a)$] for every $(\varphi,\psi,\rho_0)\in U$, $\psi$ is not a conjugate of $\rho_0$,
		\item[$(b)$] $n\le 3$,
		\item[$(c)$] $k\in\{1,n-1\}$,
		\item[$(d)$] $n$ is even and $k$ is odd.
	\end{itemize}
\end{itemize}
\end{theorem}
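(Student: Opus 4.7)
The plan is to reduce the problem to a local-to-global question. A $U$-consistent resolute refinement $C'$ of $C$ is equivalent to a choice $W_p \in C(p)$ for each $p \in \mathcal{P}$, with $C'(p) = \{W_p\}$, satisfying the equivariance $W_{p^{(\varphi,\psi,id)}} = \psi W_p$ and the strict inequality $W_{p^{(\varphi,\psi,\rho_0)}} \neq \psi W_p$ for every $(\varphi,\psi,\rho) \in U$. Applying these to stabilizer elements and using the regularity decomposition $\mathrm{Stab}_U(p) \subseteq (S_h \times \{id\} \times \{id\}) \cup (S_h \times \{\psi_*(p)\} \times \{\rho_0\})$ (with $\psi_*(p)$ conjugate to $\rho_0$), I would reduce to the single pointwise constraint that, whenever the second part of the decomposition is non-empty at $p$, the choice $W_p$ must satisfy $\psi_*(p) W_p \neq W_p$. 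Adapting the orbit extension technique of Bubboloni and Gori (2016b) to the multiwinner setting, any pointwise choice made on a complete system of $U$-orbit representatives satisfying this constraint extends uniquely to a $U$-consistent resolute refinement, so existence of such a refinement is equivalent to local solvability of $W_p \in C(p)$ with $\psi_*(p) W_p \neq W_p$.

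For $(ii) \Rightarrow (i)$ I would dispatch the four cases. Under (a), no $(\varphi,\psi,\rho_0) \in U$ has $\psi$ conjugate to $\rho_0$, so by regularity $\mathrm{Stab}_U(p) \subseteq S_h \times \{id\} \times \{id\}$ for every $p$ and the pointwise constraint is vacuous; any $W_p \in C(p)$ works. Under (d), $\psi_*$ is a product of $n/2$ disjoint transpositions with no fixed points, so a $\psi_*$-fixed $k$-subset would be a union of 2-cycles and hence of even size, contradicting $k$ odd; the constraint is again vacuous. Conditions (b) and (c) both reduce to $k \in \{1,n-1\}$, since for $n \le 3$ this inclusion is automatic. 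For $k = 1$, the $\psi_*$-fixed singletons are exactly $\{\{x_*\}\}$ for the unique possible fixed point $x_*$ of $\psi_*$; if $C(p) = \{\{x_*\}\}$, then $|C(p)| = 1$ and \eqref{sccU2} applied to the $\rho_0$-stabilizer element forces $C(p) \neq \psi_* C(p)$, contradicting $\psi_* \{x_*\} = \{x_*\}$. Therefore $C(p)$ contains a non-$\psi_*$-fixed singleton in every case. The case $k = n-1$ follows by complementation, since $W$ is $\psi_*$-fixed iff $N \setminus W$ is.

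For $(i) \Rightarrow (ii)$, assume that none of (a)--(d) holds: some $g_0 = (\varphi_0,\psi_0,\rho_0) \in U$ has $\psi_0$ conjugate to $\rho_0$, and moreover $n \ge 4$, $2 \le k \le n-2$, with either $n$ odd or $k$ even. A direct binomial count on the cycles of $\psi_0$ shows that there are at least two $\psi_0$-fixed $k$-subsets of $N$. I would pick $p \in \mathcal{P}$ with $g_0 \in \mathrm{Stab}_U(p)$ (such $p$ exists by a standard orbit argument on the action of $U$ on $\mathcal{P}$) and construct a decisive $U$-consistent $C \in \mathfrak{C}_k$ whose value at $p$ is the full set of $\psi_0$-fixed $k$-subsets; the size being $\ge 2$ renders \eqref{sccU2} vacuous at $p$. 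The assignment extends to the $U$-orbit of $p$ by \eqref{sccU1} on $id$-translates and freely on $\rho_0$-translates, and then to the remaining orbits arbitrarily and $U$-consistently. Any resolute refinement $C'$ of $C$ must pick $W_p \in C(p)$, so $\psi_0 W_p = W_p$, violating the stabilizer constraint derived in the first paragraph; hence no $U$-consistent resolute refinement of $C$ exists.

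The main obstacles are twofold. In $(ii) \Rightarrow (i)$, the delicate step is checking that the orbit extension preserves the strict inequality on $\rho_0$-translates connecting different $U \cap (S_h \times S_n \times \{id\})$-orbits: the stabilizer form forced by regularity is precisely what makes the single pointwise constraint sufficient, and verifying this requires careful double-coset bookkeeping on the action of $U$ on $\mathcal{P}$. In $(i) \Rightarrow (ii)$, the hard part is extending $C$ coherently to the whole of $\mathcal{P}$ while ensuring that no orbit inadvertently admits a $U$-consistent resolute selection; this uses both the size $\ge 2$ of the set of $\psi_0$-fixed $k$-subsets (so that \eqref{sccU2} stays vacuous at the target profile) and a careful orbit-by-orbit construction.
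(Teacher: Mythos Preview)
Your proposal is correct and follows essentially the same route as the paper. The reduction to the local constraint $\psi_*(p)W_p\neq W_p$ on orbits in $\mathcal{P}^U_2$ is exactly the content of the paper's Theorem~\ref{fu-min-count2} (together with the observation that $A^1_C(p^j)\neq\varnothing$ is automatic for decisive $C$); your case analysis for $(ii)\Rightarrow(i)$ mirrors the paper's, with the mild simplifications of folding condition~(b) into~(c) and handling $k=n-1$ by complementation rather than directly; and your counterexample for $(i)\Rightarrow(ii)$ is, up to the choice on the remaining orbits, precisely the paper's auxiliary correspondence $U_k$ (the paper sets $U_k(p)$ to be the set of $\psi_p$-fixed $k$-subsets on every profile with nontrivial $\rho_0$-stabilizer and $\mathbb{P}_k(N)$ elsewhere, and proves $|U_k(p)|\ge 2$ throughout so that \eqref{sccU2} is globally vacuous).
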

Since condition $(a)$ in Theorem \ref{main1} certainly holds when $U\le S_h\times S_n\times \{id\},$ we easily obtain the following useful consequence.

\begin{corollary}\label{cor-newnew1}
Let $U$ be a regular subgroup of $G$, with $U\le S_h\times S_n\times \{id\}$,  and $C$ be a decisive and $U$-consistent {\sc $k$-scc}. Then $C$ admits a $U$-consistent resolute refinement.
\end{corollary}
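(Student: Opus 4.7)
The plan is to deduce this directly from Theorem \ref{main1} by checking that condition (a) of part (ii) is vacuously satisfied under the hypothesis $U\le S_h\times S_n\times \{id\}$. Indeed, every element of $U$ has the form $(\varphi,\psi,id)$ with third coordinate $id$. Since $\rho_0\neq id$ in $\Omega$, the set $\{(\varphi,\psi,\rho_0)\in U\}$ is empty, and so the universal statement ``for every $(\varphi,\psi,\rho_0)\in U$, $\psi$ is not a conjugate of $\rho_0$'' holds trivially.

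With condition (a) established and $U$ assumed to be regular, the equivalence (ii)$\Rightarrow$(i) in Theorem \ref{main1} applies and tells us that every decisive and $U$-consistent $k$-scc admits a $U$-consistent resolute refinement. Applying this to our given $C$, which is decisive and $U$-consistent by hypothesis, yields the desired refinement and finishes the argument. There is essentially no obstacle to overcome: once Theorem \ref{main1} is in place, the corollary reduces to the observation that restricting $U$ to the kernel of the projection onto $\Omega$ automatically rules out the only potentially problematic triples, exactly as remarked in the paragraph preceding the statement.
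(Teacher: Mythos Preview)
Your proof is correct and follows essentially the same approach as the paper: the corollary is derived from Theorem \ref{main1} by noting that condition (a) holds vacuously when $U\le S_h\times S_n\times\{id\}$, since then no triple in $U$ has third component $\rho_0$. This is precisely the observation the paper makes in the sentence immediately preceding the corollary.
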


Observe that condition $(a)$ in Theorem \ref{main1} can be satisfied also by
 regular subgroups  not included in $S_h\times S_n\times \{id\}$. Consider the group
$\{id\}\times\{id\}\times \Omega$ and note that, for every $p\in \mathcal{P}$, $\mathrm{Stab}_U(p)=\{(id,id,id)\}$\footnote{ Indeed $p^{(id, id, \rho_0)}=p$ implies $p_1\rho_0=p_1$, which by cancellation law in the symmetric group, gives the contradiction $\rho_0=id.$}.
Thus $\{id\}\times\{id\}\times \Omega$ is regular and, since $id$ is not conjugate to $\rho_0$, $(a)$ trivially holds.

We end the section by stating some simple results about the induced  {\sc $k$-scc}s.

\begin{proposition}\label{main4}
Let $U$ be a regular subgroup of $G$, with $U\le S_h\times S_n\times \{id\}$,  and $C$ be a decisive and $U$-consistent {\sc spc}. Then the induced  {\sc $k$-scc} $C_k$ admits a $U$-consistent resolute refinement.
\end{proposition}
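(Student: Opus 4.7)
The plan is to chain together three of the preceding results: Proposition \ref{sym-cons}(ii), Corollary \ref{corollary-newnew}, and Proposition \ref{sym-cons2}, together with the basic observation (already noted in the preamble to Section \ref{scc}) that if a {\sc spc} $C'$ is a resolute refinement of $C$, then the induced {\sc $k$-scc} $C'_k$ is a resolute refinement of $C_k$.

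First, I would translate the hypothesis on $C$ from consistency to symmetry. Since $U\le S_h\times S_n\times \{id\}$, Proposition \ref{sym-cons}(ii) gives $\mathfrak{C}^{*U}=\mathfrak{C}^{U}$, so $C$ is $U$-symmetric. Because $C$ is also decisive and $U$ is regular, Corollary \ref{corollary-newnew} applies and yields a $U$-symmetric resolute refinement $C'\in\mathfrak{C}$ of $C$.

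Next, I would pass from $C'$ to its induced $k$-scc $C'_k$. By Proposition \ref{sym-cons2}, the $U$-symmetry of $C'$ implies $C'_k\in\mathfrak{C}_k^U$, i.e.\ $C'_k$ is $U$-consistent. Moreover, resoluteness of $C'$ forces resoluteness of $C'_k$ (since for a singleton $\{q\}$ the set $\{M_k(q)\}$ is again a singleton), and the fact that $C'$ refines $C$ directly yields that $C'_k$ refines $C_k$. Altogether, $C'_k$ is a $U$-consistent resolute refinement of $C_k$, as required.

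There is essentially no serious obstacle here; the whole statement is a transparent corollary once one has Corollary \ref{corollary-newnew} (the existence of a $U$-symmetric resolute refinement of $C$ in the action-on-rankings setting) and Proposition \ref{sym-cons2} (which transports $U$-symmetry on $\mathfrak{C}$ to $U$-consistency on $\mathfrak{C}_k$). The only point that deserves a brief verification is that the map $C'\mapsto C'_k$ preserves both resoluteness and the refinement relation, but this is exactly what has already been recorded just before Proposition \ref{sym-cons2}.
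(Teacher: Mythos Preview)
Your proof is correct, but it takes a different route from the paper's. The paper first observes (as you do) that $C$ is $U$-symmetric, then applies Proposition~\ref{sym-cons2} to deduce that $C_k$ is a decisive $U$-consistent {\sc $k$-scc}, and finally invokes Corollary~\ref{cor-newnew1} (the $k$-{\sc scc} existence result, itself a consequence of Theorem~\ref{main1}) to produce a $U$-consistent resolute refinement of $C_k$. You instead stay at the level of {\sc spc}s: you use Corollary~\ref{corollary-newnew} (a consequence of Theorem~\ref{main3}) to obtain a $U$-symmetric resolute refinement $C'$ of $C$, and then pass to $C'_k$. In effect, your argument reproves Proposition~\ref{cor-induc} inline after first supplying the hypothesis of that proposition via Corollary~\ref{corollary-newnew}. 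Your approach has the mild advantage of being constructive (the refinement is exhibited as $C'_k$), while the paper's approach illustrates that the result also falls out of the general $k$-{\sc scc} machinery without ever building an {\sc spc} refinement.
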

\begin{proof} Since $U\le S_h\times S_n\times \{id\}$, we have that $C$ is $U$-symmetric and thus, by Proposition \ref{sym-cons2}, $C_k$ is a decisive $U$-consistent {\sc $k$-scc}. Thus, by Corollary \ref{cor-newnew1}, $C_k$ admits a $U$-consistent resolute refinement.
\end{proof}

\begin{proposition}\label{cor-induc}
Let $U\leq G$ and $C$ be a decisive {\sc spc}. If $C$ admits a resolute $U$-symmetric refinement, then $U$ is regular and the induced  {\sc $k$-scc} $C_k$ admits a $U$-consistent resolute refinement.
\end{proposition}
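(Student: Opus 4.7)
The plan is to exploit the hypothesis by simply taking the given resolute $U$-symmetric refinement $C'$ of $C$ and considering its induced $k$-scc $C'_k$. I would then verify that $C'_k$ is a resolute refinement of $C_k$ that is $U$-consistent, and these three properties are exactly what needs to be checked.

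First, I would recall from the definition of the induced {\sc $k$-scc} (given just before Proposition \ref{sym-cons2}) that if $C'$ is resolute, then $C'_k$ is resolute, and that if $C'$ refines $C$, then $C'_k$ refines $C_k$. Both facts are noted explicitly in the paper and require no further work. Therefore $C'_k$ is a resolute refinement of $C_k$.

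The only remaining thing to check is that $C'_k$ is $U$-consistent. This follows immediately from Proposition \ref{sym-cons2}: since $C' \in \mathfrak{C}^{*U}$, we have $C'_k \in \mathfrak{C}_k^{U}$. Putting the pieces together gives the desired $U$-consistent resolute refinement of $C_k$.

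There is no real obstacle here: the proposition is essentially an immediate corollary of Proposition \ref{sym-cons2} combined with the elementary properties of the induction $C \mapsto C_k$ (preservation of resoluteness and of the refinement relation). The entire proof amounts to a couple of lines invoking these earlier results.
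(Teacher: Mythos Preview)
Your proposal is correct and matches the paper's own proof essentially line for line: take the resolute $U$-symmetric refinement (the paper calls it $f$), pass to the induced $k$-scc $f_k$, invoke Proposition~\ref{sym-cons2} for $U$-consistency, and use the already-noted facts that the induction preserves resoluteness and the refinement relation.
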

\begin{proof} Let $f$ be a resolute $U$-symmetric refinement of $C$. By \eqref{teorema 7}, $U$ is regular. Moreover, by Proposition \ref{sym-cons2}, $f_k$ is $U$-consistent. On the other hand, clearly, $f_k$ is  resolute and refines $C_k.$
\end{proof}

\section{Regular groups}\label{regular-gr}

Because of the results presented in Section \ref{maingen}, the importance of the regular subgroups of $G$ is evident.
 In this section we propose some theorems which provide a way to test whether a subgroup $U$ of $G$ of the type $V\times W\times \{id\}$ or $V\times W\times \Omega$, with $V\leq S_h$ and $W\leq S_n$, is regular or not. These types of groups are particularly relevant for applications. Regular groups of a different structure will be treated in Section
\ref{case-forth}.

We start with some preliminary definitions.
Let $m\in\mathbb{N}$ be fixed in this section. Consider  $\sigma\in S_m$. For every $x\in \ldbrack m \rdbrack$, the $\sigma$-orbit $
x^{\langle\sigma\rangle}$ of $x$ is defined by
$
x^{\langle\sigma\rangle}=\{\sigma^t(x)\in \ldbrack m \rdbrack: t\in \mathbb{N}\}.
$
It is well-known that $|x^{\langle\sigma\rangle}|=s$ where $s=\min\{t\in\mathbb{N}: \sigma^t(x)=x\}$. If $x$ is fixed by $\sigma,$ then $x^{\langle\sigma\rangle}=\{x\}.$
If instead $x$ is moved by $\sigma$ and the constituent of $\sigma$ moving $x$ is the cycle $(x_1\cdots x_s)$, then
$x^{\langle\sigma\rangle}=\{x_1,\dots, x_s\}.$
The set
$
O(\sigma)=\{x^{\langle\sigma\rangle}: x\in \ldbrack m \rdbrack\}
$
of the $\sigma$-orbits is a partition of $\ldbrack m \rdbrack$, and we denote its size by $r(\sigma)$. A system of representatives of  the $\sigma$-orbits is a set $\{x_1,\dots, x_{r(\sigma)}\}\in\mathbb{P}_{r(\sigma)}(\ldbrack m \rdbrack) $ such that $O(\sigma)=\{x_1^{\langle\sigma\rangle},\dots, x_{r(\sigma)}^{\langle\sigma\rangle}\}$.

Next we recall the well-known number theoretical concept of partition. A {\it partition} of $m$ is an unordered list $T=[m_1,\dots,m_r]$ where $r\in\mathbb{N}$, for every
$j\in \{1,\dots,r\}$, $m_j\in\mathbb{N}$ and $m=\sum_{j=1}^{r}m_j.$
The numbers $m_1,\ldots, m_r$ are called the terms of $T$. The set of partitions of $m$ is denoted by
$\Pi_m$.

 Consider $T\in \Pi_m$ and assume that $T$ admits $s\in\mathbb{N}$ distinct terms, say $m_1<\dots < m_s$, and assume that, for every
$j\in \ldbrack s \rdbrack$, $m_j$ appears $t_j\geq 1$ times in $T$. Then we use the notation $T=[m_1^{t_1},..., m_s^{t_s}]$ (where $t_j$ is omitted when it equals 1). We say that  $[m_1^{t_1},..., m_s^{t_s}]$ is the normal form of $T$ and that $t_j$ is the  multiplicity of $m_j$.
For instance, $[2,1,3,1]\in \Pi_7$ has normal form $[1^2, 2,3].$
We recall the well-known surjective function
\[
\mbox{$T:S_m\to \Pi_m$},\quad \sigma\mapsto T(\sigma)=\left[|x_1^{\langle\sigma\rangle}|,\ldots, |x_{r(\sigma)}^{\langle\sigma\rangle}|\right],
\]
where $\{x_1,\dots, x_{r(\sigma)}\}\in\mathbb{P}_{r(\sigma)}(\ldbrack m \rdbrack) $ is a system of representatives of the $\sigma$-orbits. For every $\sigma\in S_m$, $T(\sigma)$ is called the type of $\sigma$. In other words $T(\sigma)$ is the unordered list of the sizes of the orbits of the group generated by $\sigma$ on the set $\ldbrack m \rdbrack.$
Note  that the number of  terms equal to $1$ in $T(\sigma)$ counts the fixed points of $\sigma$ while the number of terms different from $1$ counts the constituents of $\sigma$. Moreover, $|\sigma|= \mathrm{lcm }(T(\sigma))$.  For instance, if $\sigma=(123)(456)(78)\in S_9$, then $r(\sigma)=4$, the type of $\sigma$ is $T(\sigma)=[1,2,3,3]\in \Pi_9$, $|\sigma|=\mathrm{lcm }[1,2,3,3]=6$ and a system of representatives of the $\sigma$-orbits is $\{1,4,7,9\}\in\mathbb{P}_{4}(\ldbrack 9 \rdbrack)$.
The  theoretical importance of the concept of type relies on the fact that two permutations are conjugate if and only if they have the same type.

Given $U\le S_m$, we define the {\it type number} of $U$ by
\begin{equation}\label{type-number}
\gamma(U)=\mathrm{lcm}\{\gcd(T(\sigma)): \sigma\in U\}.
\end{equation}
We describe some basic properties of the type number after having introduced some arithmetic notation. Let $x,y\in \mathbb{N}$. If $x$ divides $y$, we write $x\mid y.$ If $\pi\in \mathbb{N}$ is a prime number we denote by  $x_\pi=\max \{\pi^a:a\in\mathbb{N}_0,\, \pi^a\mid x\}$ the $\pi$-part of $x$.

\begin{lemma}\label{gamma-prop} Let $U,V\le S_m$. Then the following facts hold true.
 \begin{itemize}
 \item[$(i)$] If $U\leq V$, then $\gamma(U)\mid \gamma(V).$
\item[$(ii)$] $\gamma(U)\mid m$.
\item[$(iii)$] If $U$ contains an $m$-cycle, then $\gamma(U)=m$. In particular, $\gamma(S_m)= m$.
 \end{itemize}
\end{lemma}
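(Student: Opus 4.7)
The plan is to establish each of the three claims directly from the definition $\gamma(U)=\mathrm{lcm}\{\gcd(T(\sigma)):\sigma\in U\}$ together with the elementary observation that the terms of $T(\sigma)$ are the orbit sizes of $\sigma$, which sum to $m$.

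For (i), the key point is monotonicity of the lcm under set inclusion: if $U\le V$, then $\{\gcd(T(\sigma)):\sigma\in U\}\subseteq\{\gcd(T(\sigma)):\sigma\in V\}$, so the lcm of the smaller set divides the lcm of the larger set. This gives $\gamma(U)\mid\gamma(V)$.

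For (ii), I would fix $\sigma\in U$ and observe that, since $O(\sigma)$ partitions $\ldbrack m \rdbrack$, the sum of the terms of $T(\sigma)$ equals $m$. Since $\gcd(T(\sigma))$ divides each term, it divides their sum, hence divides $m$. Consequently every element of $\{\gcd(T(\sigma)):\sigma\in U\}$ divides $m$, and therefore so does their lcm, giving $\gamma(U)\mid m$.

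For (iii), I would exhibit an explicit witness: if $\sigma\in U$ is an $m$-cycle, then $\sigma$ has a single orbit of size $m$, so $T(\sigma)=[m]$ and $\gcd(T(\sigma))=m$. Thus $m\in\{\gcd(T(\sigma)):\sigma\in U\}$, which forces $m\mid\gamma(U)$; combined with (ii), this yields $\gamma(U)=m$. The final statement $\gamma(S_m)=m$ then follows by noting that $(1\,2\,\cdots\,m)\in S_m$ is an $m$-cycle. None of the steps presents a genuine obstacle; the whole lemma is essentially a bookkeeping exercise once the definition of $\gamma$ and the orbit-partition property of permutations are in hand.
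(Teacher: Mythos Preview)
Your proposal is correct and follows essentially the same approach as the paper: set inclusion for (i), the orbit sizes summing to $m$ for (ii), and an $m$-cycle witness combined with (ii) for (iii).
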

\begin{proof}$(i)$ The set of integers $\{\gcd(T(\sigma)): \sigma\in U\}$ is included in the set of integers $\{\gcd(T(\sigma)): \sigma\in V\}$ and thus the least common multiple of the first divides that of the second.

$(ii)$ Let $\sigma\in U$ and let $T(\sigma)=[m_1,\dots, m_r]$. If $d=\gcd(T(\sigma))$, we have that $d\mid m_j$ for all $j\in \ldbrack r \rdbrack$ and since $\sum_{j=1}^r m_j=m$ we have $d\mid m$. Thus $m$ is a common multiple for the integers in $\{\gcd(T(\sigma)): \sigma\in U\}$, which implies $\gamma(U)\mid m.$

$(iii)$ Let $\sigma\in U$ be an $m$-cycle. Then $T(\sigma)=[m]$ and $\gcd(T(\sigma))=m$. Thus $m\mid \gamma(U)$. Since by $(ii)$ we also have $\gamma(U)\mid m$
we conclude that $\gamma(U)=m$.
\end{proof}

\begin{theorem}\label{reg-id}
Let $V\le S_h$ and $W\le S_n$. Then $V\times W\times \{id\}$ is regular if and only if
\[
\gcd(\gamma(V),|W|)=1.
\]
\end{theorem}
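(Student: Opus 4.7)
The plan is to reduce the regularity condition to a purely arithmetic statement about orders of elements and then match it against the $\gcd$ condition using Cauchy's theorem. Since $U=V\times W\times\{id\}$ has trivial $\rho$-component, every element of $\mathrm{Stab}_U(p)$ automatically sits in $S_h\times S_n\times\{id\}$. Hence the regularity definition collapses to requiring $\mathrm{Stab}_U(p)\subseteq S_h\times\{id\}\times\{id\}$ for every $p\in\mathcal{P}$, that is, for every $p\in\mathcal{P}$ and every $(\varphi,\psi,id)\in V\times W\times\{id\}$ fixing $p$, one must have $\psi=id$.

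The key lemma I would establish is the following: for $\varphi\in V$ and $\psi\in W$, a profile $p$ fixed by $(\varphi,\psi,id)$ exists if and only if $|\psi|\mid\gcd(T(\varphi))$. Unpacking \eqref{def-azione} with $\rho=id$, the equation $p^{(\varphi,\psi,id)}=p$ reads $p_{\varphi(i)}=\psi p_i$ for all $i\in H$. Writing each $\varphi$-orbit $\mathcal{O}$ with representative $x$ and length $m$, iterating along $\mathcal{O}$ forces $p_{\varphi^t(x)}=\psi^t p_x$; consistency after one full loop gives $\psi^m p_x=p_x$, which, using the identification of $\mathbf{L}(N)$ with $S_n$ and the cancellation law in $S_n$, is equivalent to $\psi^m=id$, i.e.\ $|\psi|\mid m$. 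Since this must hold for every orbit length $m\in T(\varphi)$, the necessity is $|\psi|\mid\gcd(T(\varphi))$. Conversely, if that divisibility holds, one constructs a fixed profile by choosing arbitrary preferences on a system of representatives of the $\varphi$-orbits and extending by the formula $p_{\varphi^t(x)}=\psi^t p_x$; the closure relation $\psi^m=id$ guarantees that this extension is well defined.

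Combining these two steps, regularity of $V\times W\times\{id\}$ becomes the statement that for every $\varphi\in V$ and every $\psi\in W\setminus\{id\}$, $|\psi|\nmid\gcd(T(\varphi))$. From here the equivalence with $\gcd(\gamma(V),|W|)=1$ is a short arithmetic argument. For the forward direction, if $\gcd(\gamma(V),|W|)=1$ and $|\psi|\mid\gcd(T(\varphi))$, then $|\psi|$ divides $\gamma(V)=\mathrm{lcm}\{\gcd(T(\varphi)):\varphi\in V\}$ and, by Lagrange's theorem, also divides $|W|$, so $|\psi|=1$. For the converse, if some prime $\pi$ divides both $\gamma(V)$ and $|W|$, then $\pi\mid\gcd(T(\varphi))$ for some $\varphi\in V$ (because $\pi$ is prime and divides the lcm, it must divide at least one term), while Cauchy's theorem applied to $W$ produces $\psi\in W$ with $|\psi|=\pi$; this pair $(\varphi,\psi)$ violates the reformulated regularity condition.

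The only mildly delicate step is the construction part of the key lemma: one must verify that freely assigning linear orders on orbit representatives and extending by $\psi^t$ really yields a well-defined preference profile, which relies on the fact that the $\psi^t p_x$ are again elements of $\mathbf{L}(N)$ (a consequence of $\psi\mathbf{L}(N)=\mathbf{L}(N)$ recalled in Section \ref{model}) and on the cancellation law used to turn $\psi^m p_x=p_x$ into $\psi^m=id$. Everything else is bookkeeping between the definition of $\gamma$, Lagrange, and Cauchy.
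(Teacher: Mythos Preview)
Your argument is correct. The paper's own proof is shorter because it outsources the hard step to Theorem~12 of Bubboloni and Gori (2015), which characterizes non-regularity of $U$ in terms of the existence of $(\varphi,\psi,\rho)\in U$ and a prime $\pi$ with $|\psi|_\pi\mid\gcd(T(\varphi))$; from there the paper runs the same Cauchy/Lagrange arithmetic you do. Your key lemma --- a profile fixed by $(\varphi,\psi,id)$ exists iff $|\psi|\mid\gcd(T(\varphi))$, proved by iterating $p_{\varphi(i)}=\psi p_i$ along each $\varphi$-orbit --- is precisely the content of that cited theorem specialized to the case $\rho=id$, so you have effectively reproved it from scratch. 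The upshot is that your version is self-contained while the paper's relies on an external reference; otherwise the two proofs are the same.
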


\begin{proof} Let  $U=V\times W\times \{id\}$. Assume first that $\gcd(\gamma(V),|W|)=1$. Assume further, by contradiction, that $U$ is not regular. Then, by Theorem 12 in Bubboloni and Gori (2015), there exist $(\varphi,\psi,id)\in U$ and a prime $\pi$ such that $|\psi|_\pi>1$ and $|\psi|_\pi \mid \gcd(T(\varphi))$. Then $\pi \mid \gamma(V)$ and, by Lagrange Theorem,  $\pi \mid |W|$ so that $\pi \mid\gcd(\gamma(V),|W|)=1$, a contradiction.

Assume next that $U$ is regular. We show that if $\pi $ is a prime dividing $|W|$, then $\pi\nmid \gamma(V)$. Let $\pi \mid |W|.$ Then, by Cauchy Theorem, there exists $\psi\in W$ with $|\psi|=\pi$. But, for every $\varphi\in V$, we have $(\varphi,  \psi,id)\in U$ and, of course, $|\psi|_{\pi}=\pi$. Thus, by Theorem 12 in Bubboloni and Gori (2015), we have that,  for every $\varphi\in V$, $\pi\nmid\gcd(T(\varphi))$ so that $\pi \nmid \mathrm{lcm}\{\gcd(T(\varphi)): \varphi\in V\}=\gamma(V)$.
\end{proof}

\begin{theorem}\label{reg-omega}
Let $V\le S_h$ and $W\le S_n$. Then $V\times W\times \Omega$ is regular if and only if
\[
\gcd(\gamma(V),\mathrm{lcm}(|W|,2))=1.
\]
\end{theorem}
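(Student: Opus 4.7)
The plan is to mirror the proof of Theorem \ref{reg-id}, with the extra ingredient being the handling of stabilizer elements whose third component is $\rho_0$. Setting $U=V\times W\times \Omega$, I would first note that the arithmetic condition $\gcd(\gamma(V),\mathrm{lcm}(|W|,2))=1$ splits as $\gcd(\gamma(V),|W|)=1$ together with $\gamma(V)$ odd; the first factor is exactly the criterion for $V\times W\times\{id\}$ to be regular (Theorem \ref{reg-id}), and the second factor will be needed to control the $\rho_0$-part.

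For sufficiency, I would fix $p\in\mathcal{P}$ and examine $\mathrm{Stab}_U(p)$. Elements $(\varphi,\psi,id)$ lie in the stabilizer of $p$ inside the subgroup $V\times W\times\{id\}$, which is regular by Theorem \ref{reg-id}; since that subgroup has trivial $\rho$-component, the regularity condition forces $\psi=id$. For an element $(\varphi,\psi,\rho_0)\in\mathrm{Stab}_U(p)$, the defining equation reads $\psi p_{\varphi^{-1}(i)}\rho_0=p_i$ for all $i$; iterating around a $\varphi$-orbit $\{i_1,\ldots,i_s\}$ gives $p_{i_1}=\psi^{s}p_{i_1}\rho_0^{s}$, and the cancellation laws in $S_n$ (recalled in Section \ref{model} via the identification $\mathbf{L}(N)\simeq S_n$) force $\rho_0^{s}=id$, hence $s$ even. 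But $\gamma(V)$ odd means each $\gcd(T(\varphi))$ is odd, so every $\varphi\in V$ has at least one orbit of odd length; this rules out all such triples. Consequently $\mathrm{Stab}_U(p)\subseteq S_h\times\{id\}\times\{id\}$, and taking $\psi_*=\rho_0$ (which is trivially conjugate to itself) verifies the definition of regularity.

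For necessity, I would first observe that any subgroup of a regular subgroup of $G$ is regular, because $\mathrm{Stab}_{U'}(p)=\mathrm{Stab}_U(p)\cap U'$ inherits the required inclusion verbatim; applying this to $V\times W\times\{id\}\le U$ and using Theorem \ref{reg-id} yields $\gcd(\gamma(V),|W|)=1$. To obtain $\gamma(V)$ odd, I would argue by contradiction: if $\gamma(V)$ is even, some $\varphi\in V$ has $\gcd(T(\varphi))$ even, so all its orbits have even length. Then one can realise $(\varphi,id,\rho_0)\in\mathrm{Stab}_U(p)$ for a suitable $p$ by choosing $p_{i_1}\in\mathbf{L}(N)$ freely on each orbit $\{i_1,\ldots,i_s\}$ of $\varphi$ and setting $p_{i_{j+1}}=p_{i_j}\rho_0$; the cyclic consistency $p_{i_1}=p_{i_1}\rho_0^{s}$ holds because $s$ is even. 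Regularity would then force $(\varphi,id,\rho_0)\in S_h\times\{\psi_*\}\times\{\rho_0\}$, i.e.\ $\psi_*=id$, contradicting that $\psi_*$ is conjugate to $\rho_0$.

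The step I expect to be the most delicate is the cycle-length computation in the sufficiency direction: one must carefully track how the relations $\psi\in S_n$ and $\rho_0\in\Omega$ act on the two sides of $p_i\in\mathbf{L}(N)\simeq S_n$, and then invoke the cancellation law to pass from $p_{i_1}=\psi^{s}p_{i_1}\rho_0^{s}$ to the separate identities $\psi^{s}=id$ and $\rho_0^{s}=id$. Everything else is a straightforward combinatorial construction of fixed profiles and bookkeeping with Theorem \ref{reg-id}.
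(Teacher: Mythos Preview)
Your necessity direction is fine, and the observation that subgroups of regular groups are regular together with the explicit construction of a profile fixed by $(\varphi,id,\rho_0)$ is clean.

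The sufficiency direction, however, contains a genuine error at exactly the point you flagged as delicate. From $p_{i_1}=\psi^{s}p_{i_1}\rho_0^{s}$ (an equation in $S_n$ via the identification $\mathbf{L}(N)\simeq S_n$) you \emph{cannot} cancel to obtain $\psi^{s}=id$ and $\rho_0^{s}=id$ separately: the equation only says $\psi^{s}=p_{i_1}\rho_0^{-s}p_{i_1}^{-1}$, i.e.\ that $\psi^{s}$ is conjugate to $\rho_0^{s}$. Concretely, take $h=3$, $n=2$, $V=\langle(123)\rangle$, $W=S_2$ (so $\gamma(V)=3$, $|W|=2$, and the arithmetic condition holds). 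For $p=(q,q,q)$ with $q=id\in S_2$ one checks that $((123),\rho_0,\rho_0)\in\mathrm{Stab}_U(p)$, since $\rho_0\, q\,\rho_0=q$ in the abelian group $S_2$. Thus your conclusion $\mathrm{Stab}_U(p)\subseteq S_h\times\{id\}\times\{id\}$ is false: stabilizer elements with third component $\rho_0$ can and do exist.

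Your strategy can be repaired, but the target has to change. What you must show is that the $\psi$-component of any $\rho_0$-stabilizer element is a \emph{fixed} conjugate of $\rho_0$. The uniqueness is easy (multiply two such elements and use your $id$-part to get $\psi_1\psi_2^{-1}=id$). For the conjugacy, first square $(\varphi,\psi,\rho_0)$ and apply the $id$-part to obtain $\psi^{2}=id$; then use your orbit iteration on an orbit of odd length $s$ (which exists since $\gamma(V)$ is odd) to get $\psi=\psi^{s}=p_{i_1}\rho_0 p_{i_1}^{-1}$, a conjugate of $\rho_0$. This fixes the gap and gives a self-contained proof. By contrast, the paper's own proof is not direct: it invokes Theorem~12 of Bubboloni and Gori (2015) as a black-box characterisation of regularity and reduces both implications to arithmetic with that criterion.
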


\begin{proof} Let  $U=V\times W\times \Omega$. Assume first that $\gcd(\gamma(V),\mathrm{lcm}(|W|,2))=1$. Assume further, by contradiction, that $U$ is not regular. Then, by Theorem 12 in Bubboloni and Gori (2015), there exist $(\varphi,\psi,id)\in U$ and a prime $\pi$ such that $|\psi|_\pi>1$ and $|\psi|_\pi \mid \gcd(T(\varphi))$, or there exist $(\varphi,\psi,\rho_0)\in U$, with $\psi^2=id$ and $\psi$ not conjugate of $\rho_0,$ such that $2 \mid \gcd(T(\varphi)).$
In the first case, by Lagrange Theorem, we have $\pi\mid |W|$ as well as $\pi\mid\gamma(V)$ and thus $\pi\mid \gcd(\gamma(V),\mathrm{lcm}(|W|,2))=1$, a contradiction.
In the second case we have $2\mid \gamma(V)$, which implies the contradiction $2\mid \gcd(\gamma(V),\mathrm{lcm}(|W|,2))=1$.

Assume next that $U$ is regular. We show first that if $\pi$ is a prime dividing $|W|$, then $\pi\nmid \gamma(V)$. Let $\pi \mid |W|$. Then, by Cauchy Theorem, there exists $\psi\in W$ with $|\psi|=\pi$. But, for every $\varphi\in V$, we have $(\varphi,  \psi,id)\in U$ and, of course, $|\psi|_{\pi}=\pi$. Thus, by Theorem 12 in Bubboloni and Gori (2015), we get $\pi\nmid\gcd(T(\varphi))$ and so also $\pi \nmid \mathrm{lcm}\{\gcd(T(\varphi)): \varphi\in V\}=\gamma(V).$
We are then left with proving that $2\nmid \gamma(V)$, that is, that $2\nmid\gcd(T(\varphi))$ for all $\varphi\in V$. Let $\varphi\in V$ and consider $(\varphi, id, \rho_0)\in U.$ Since $id^2=id$ but $id$ is not a conjugate of $\rho_0$, by Theorem 12 in Bubboloni and Gori (2015), we get $2\nmid \gcd(T(\varphi))$.
\end{proof}

\section{Generalized anonymity and neutrality}\label{gen-an-neut}

Consider $V\le S_h$ and $W\le S_n$. Given $C\in \mathfrak{P}$ ($C\in\mathfrak{C}_k$), we say that $C$ is $V$-anonymous if $C$ is $V\times  \{id \} \times \{id \}$-consistent;
$W$-neutral if $C$ is $ \{id \} \times W \times \{id \}$-consistent.
Thus, $C$ is $V$-anonymous if permuting individual names according to permutations in $V$ has no effect on the final outcome; $C$ is $W$-neutral if the unique effect of permuting alternative names according to a permutation in $W$ is that alternative names are accordingly permuted in the final outcome.
Note that, the concepts of $S_h$-anonymity and $S_n$-neutrality correspond to the classical concepts of anonymity and neutrality, respectively.
 Note also that every $C\in \mathfrak{P}$ ($C\in\mathfrak{C}_k$) is $\{id\}$-anonymous and $\{id\}$-neutral.

We also stress that, by Propositions \ref{U,V} and \ref{U,V-corr},  $C\in \mathfrak{P}$ ($C\in \mathfrak{C}_k)$ is $V$-anonymous and  $W$-neutral if and only if $C$ is $V\times W\times \{id\}$-consistent;
$C\in \mathfrak{P}$ is $V$-anonymous, $W$-neutral and reversal symmetric if and only if $C$ is $V\times W\times \Omega$-symmetric; $C\in \mathfrak{P}$ ($C\in \mathfrak{C}_k)$ is $V$-anonymous, $W$-neutral and immune to the reversal bias if and only if $C$ is $V\times W\times \Omega$-consistent.

Below we provide some simple but very important consequences of the theory developed in the previous sections.

\begin{theorem}\label{application1}
Let $V\le S_h$, $W\le S_n$ and $C$ be a decisive, $V$-anonymous and $W$-neutral {\sc spc}. Then the two following conditions are equivalent:
\begin{itemize}
	\item[$(i)$] $C$ admits a $V$-anonymous and $W$-neutral resolute refinement;
\item [$(ii)$] $\gcd(\gamma(V),|W|)=1$.
\end{itemize}
\end{theorem}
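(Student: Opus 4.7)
The plan is to view the combined property ``$R$-anonymous and $Q$-neutral'' as a single $U$-consistency condition for a carefully chosen subgroup $U$, and then to reduce the theorem to an immediate conjunction of Theorem \ref{reg-id} together with the existence results already proven for resolute refinements of decisive {\sc spc}s. Concretely, set
\[
U=\mathrm{Aut}(R)\times \mathrm{Aut}(Q)\times \{id\}\le S_h\times S_n\times \{id\}.
\]
By the discussion preceding the theorem (essentially Propositions \ref{U,V} and \ref{U,V-corr} applied to the two defining subgroups), being $R$-anonymous and $Q$-neutral is exactly being $U$-consistent; similarly, admitting an $R$-anonymous and $Q$-neutral resolute refinement is exactly admitting a $U$-consistent resolute refinement. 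Theorem \ref{reg-id}, applied to $V=\mathrm{Aut}(R)$ and $W=\mathrm{Aut}(Q)$, rephrases condition (ii) of the theorem as the regularity of $U$. So the target is simply: $C$ admits a $U$-consistent resolute refinement if and only if $U$ is regular.

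For the implication (ii) $\Rightarrow$ (i), assume $\gcd(\gamma(\mathrm{Aut}(R)),|\mathrm{Aut}(Q)|)=1$; then $U$ is regular by Theorem \ref{reg-id}. Since $C$ is a decisive and $U$-consistent {\sc spc}, Theorem \ref{main2} directly produces the desired $U$-consistent resolute refinement. (Alternatively one could use Proposition \ref{sym-cons}(ii) to pass to $U$-symmetry and then invoke Corollary \ref{corollary-newnew}.)

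For the reverse implication (i) $\Rightarrow$ (ii), assume $C$ admits a $U$-consistent resolute refinement $f$. Because $U\le S_h\times S_n\times \{id\}$, Proposition \ref{sym-cons}(ii) guarantees that $f$ is also $U$-symmetric. Thus there exists a $U$-symmetric resolute {\sc spc}, so by the characterization recalled in \eqref{teorema 7}, $U$ is regular; Theorem \ref{reg-id} then yields $\gcd(\gamma(\mathrm{Aut}(R)),|\mathrm{Aut}(Q)|)=1$.

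The argument is essentially a synthesis of results already established in the paper, so no genuinely new obstacle is expected; the only delicate point is making sure the equivalence between $U$-consistency and $U$-symmetry (which only holds because $U$ sits inside $S_h\times S_n\times \{id\}$) is invoked on the correct side of the equivalence in order to legitimately apply \eqref{teorema 7}. Once this observation is in place, both implications follow by directly citing Theorems \ref{reg-id} and \ref{main2}.
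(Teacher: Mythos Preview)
Your proof is correct and follows essentially the same route as the paper: set $U=\mathrm{Aut}(R)\times\mathrm{Aut}(Q)\times\{id\}$, use Theorem~\ref{reg-id} to identify condition~(ii) with regularity of $U$, and then invoke \eqref{teorema 7} and Theorem~\ref{main2} for the two implications. Your version is in fact slightly more explicit than the paper's, since you spell out via Proposition~\ref{sym-cons}(ii) why a $U$-consistent resolute refinement is automatically $U$-symmetric (so that \eqref{teorema 7} genuinely applies), a step the paper leaves implicit.
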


\begin{proof}
$(i)\Rightarrow (ii)$. Assume that $(i)$ holds true. Then by  \eqref{teorema 7}, we have that the group $V\times W\times \{id\}$
is regular, so that, by Theorem \ref{reg-id}, $\gcd(\gamma(V),|W|)=1$.

$(ii)\Rightarrow (i)$. Assume that $(ii)$ holds true. Then, by Theorem \ref{reg-id}, the group $V\times W\times \{id\}$ is regular. Then we can apply Theorem \ref{main2}.
\end{proof}

Note that condition $\gcd(\gamma(V),|W|)=1$ above is trivially satisfied if one between $V$ and $W$ is trivial.
Observe also that the above theorem generalizes the classic result about the existence of a resolute anonymous and neutral {\sc spc}. Namely, taking $V=S_h$, $W=S_n$ and using Lemma \ref{gamma-prop}\,$(iii)$, we immediately have that there exists a resolute, anonymous and neutral {\sc spc} if and only if $\gcd(h,n!)=1$.

\begin{theorem}\label{application2}
Let  $V\le S_h$, $W\le S_n$ and $C$ be a decisive, $V$-anonymous, $W$-neutral and reversal symmetric {\sc spc}. Then the two following conditions are equivalent:
\begin{itemize}
	\item[$(i)$] $C$ admits a $V$-anonymous, $W$-neutral and reversal symmetric resolute refinement;
\item [$(ii)$] $\gcd(\gamma(V),\mathrm{lcm}(|W|,2))=1$ and there exists an irreflexive acyclic social method $R$ such that $C^R$ refines $C$ and, for $U=V\times W\times \Omega$, the following condition is satisfied:
\begin{itemize}
\item[$(a)$] for every $p\in\mathcal{P}$, $x,y\in N$ and $(\varphi,\psi,\rho_0)\in Stab_U(p)$, we have that $(x,y)\in R(p)$ if and only if $(\psi(y),\psi(x))\in R(p)$.
\end{itemize}
\end{itemize}
\end{theorem}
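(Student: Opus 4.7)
The plan is to deduce Theorem \ref{application2} as an immediate consequence of Theorem \ref{main3} and Theorem \ref{reg-omega}, once it is observed that the combined requirements of being $R$-anonymous, $Q$-neutral and reversal symmetric are jointly equivalent to $U$-symmetry with respect to
\[ U \;=\; \mathrm{Aut}(R) \times \mathrm{Aut}(Q) \times \Omega, \]
where here $R$ denotes the fixed relation on $H$ (to be kept distinct from the social method $R$ existentially quantified in condition (a)).

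First I would set up the symmetry dictionary. Both $\mathrm{Aut}(R)\times\{id\}\times\{id\}$ and $\{id\}\times\mathrm{Aut}(Q)\times\{id\}$ are contained in $S_h\times S_n\times\{id\}$, so Proposition \ref{sym-cons}(ii) allows me to restate $R$-anonymity and $Q$-neutrality as symmetry with respect to those subgroups; reversal symmetry is already defined as symmetry with respect to $\{id\}\times\{id\}\times\Omega$. Since $U$ is the join (in fact the direct product) of these three pairwise-commuting subgroups, Proposition \ref{U,V} yields $C\in\mathfrak{C}^{*U}$, and the same computation shows that a resolute refinement of $C$ satisfies the three properties if and only if it is $U$-symmetric.

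For (i)$\Rightarrow$(ii): the existence of a $U$-symmetric resolute refinement of $C$ forces $U$ to be regular by the cited characterization \eqref{teorema 7}, so Theorem \ref{reg-omega} produces $\gcd(\gamma(\mathrm{Aut}(R)),\mathrm{lcm}(|\mathrm{Aut}(Q)|,2))=1$. Since $C$ is decisive and $U$-symmetric, the implication (i)$\Rightarrow$(iii) of Theorem \ref{main3} delivers the desired irreflexive acyclic social method satisfying condition (a). For (ii)$\Rightarrow$(i): the arithmetical condition together with Theorem \ref{reg-omega} yields regularity of $U$, while the second half of (ii) is precisely clause (iii) of Theorem \ref{main3}; the implication (iii)$\Rightarrow$(i) then provides a $U$-symmetric resolute refinement of $C$, which by the dictionary inherits all three target properties.

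I expect no substantial obstacle: the argument is a straightforward packaging of Theorem \ref{main3} with the group-theoretic criterion \eqref{teorema 7} and its sharpening in Theorem \ref{reg-omega}. The only point requiring a little care is notational, namely keeping the fixed relation $R$ on $H$ separate from the social method $R$ that appears as an existentially quantified object inside condition (a).
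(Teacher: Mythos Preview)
Your proposal is correct and follows essentially the same route as the paper: both directions are obtained by combining the equivalence between the three named properties and $U$-symmetry for $U=\mathrm{Aut}(R)\times\mathrm{Aut}(Q)\times\Omega$ (which the paper records just before Theorem \ref{application1} via Propositions \ref{sym-cons} and \ref{U,V}) with \eqref{teorema 7}, Theorem \ref{reg-omega}, and Theorem \ref{main3}. Your explicit unpacking of the ``dictionary'' step and your remark about the notational collision between the relation $R$ on $H$ and the social method $R$ are helpful clarifications, but the underlying argument is the same.
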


\begin{proof}
$(i)\Rightarrow (ii)$. Assume that $(i)$ holds true. Then by  \eqref{teorema 7}, we have that the group $U$
is regular, so that, by Theorem \ref{reg-omega}, $\gcd(\gamma(V),\mathrm{lcm}(|W|,2))=1$. Moreover, using now Theorem \ref{main3} we conclude the proof.

$(ii)\Rightarrow (i)$. Assume that $(ii)$ holds true. Then, by Theorem \ref{reg-omega}, we know that the group $U$ is regular. Therefore we can apply Theorem \ref{main3}.
\end{proof}

\begin{theorem}\label{application-spc}
Let $V\le S_h$, $W\le S_n$ and $C$ be a decisive, $V$-anonymous, $W$-neutral and immune to the reversal bias {\sc spc}. If
$\gcd(\gamma(V),\mathrm{lcm}(|W|,2))=1$, then
$C$ admits a $V$-anonymous, $W$-neutral and immune to the reversal bias resolute refinement.
\end{theorem}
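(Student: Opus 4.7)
The plan is to reduce Theorem \ref{application-spc} to a direct combination of the group-theoretic criterion of Theorem \ref{reg-omega} and the existence result of Theorem \ref{main2}, exactly as Theorem \ref{application1} was obtained by combining Theorem \ref{reg-id} with Theorem \ref{main2}. Set
\[
U=\mathrm{Aut}(R)\times \mathrm{Aut}(Q)\times \Omega.
\]
The first step is to recast the hypotheses on $C$ as a single symmetry condition on $U$. By the characterization recorded just after Propositions \ref{U,V} and \ref{U,V-corr}, a {\sc spc} is $R$-anonymous, $Q$-neutral and immune to the reversal bias if and only if it is $U$-consistent. Hence $C$ is a decisive $U$-consistent {\sc spc}.

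Next, I would invoke Theorem \ref{reg-omega} with $V=\mathrm{Aut}(R)\le S_h$ and $W=\mathrm{Aut}(Q)\le S_n$. The arithmetic hypothesis $\gcd(\gamma(\mathrm{Aut}(R)),\mathrm{lcm}(|\mathrm{Aut}(Q)|,2))=1$ is precisely the criterion stated there, so we obtain that $U$ is a regular subgroup of $G$.

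Finally, Theorem \ref{main2} applied to the regular subgroup $U$ and the decisive $U$-consistent {\sc spc} $C$ yields a $U$-consistent resolute refinement $C'$ of $C$. Reading $U$-consistency of $C'$ through the same characterization used in the first step shows that $C'$ is simultaneously $R$-anonymous, $Q$-neutral and immune to the reversal bias, which is the desired conclusion.

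There is no real obstacle: all the work has been absorbed into Theorems \ref{reg-omega} and \ref{main2}, and the only thing to check is the cosmetic translation between the language of $U$-consistency and the three named properties. Note also that, unlike the situation in Theorem \ref{application2}, reversal symmetry is not required, so we do not need the additional social-method condition $(a)$ appearing there; this is why immunity to the reversal bias suffices for a one-way implication, while reversal symmetry forced the use of Theorem \ref{main3} and produced an extra technical clause.
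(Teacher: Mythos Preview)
Your proof is correct and follows essentially the same approach as the paper's own proof, which simply says that Theorem \ref{reg-omega} makes $U=\mathrm{Aut}(R)\times \mathrm{Aut}(Q)\times \Omega$ regular and then Theorem \ref{main2} applies. Your version just makes explicit the translation between $U$-consistency and the three named properties, and adds a helpful remark contrasting the situation with Theorem \ref{application2}.
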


\begin{proof} By Theorem \ref{reg-omega}, the group $U=V\times W\times \Omega$ is regular. Then
Theorem \ref{main2} applies.
\end{proof}

\begin{theorem}\label{application3}
Let $V\le S_h$, $W\le S_n$  and $C$ be a decisive, $V$-anonymous and $W$-neutral {\sc $k$-scc}. If
$\gcd(\gamma(V),|W|)=1$, then
$C$ admits a $V$-anonymous and $W$-neutral resolute refinement.
\end{theorem}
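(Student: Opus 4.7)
The proof will be a direct application of the machinery built up in the previous sections, mirroring the strategy used for Theorem \ref{application1} but invoking the $k$-{\sc scc} version of the result rather than the {\sc spc} version. The plan is to reinterpret the assumptions as a statement about $U$-consistency for an appropriate subgroup $U$ of $G$, verify regularity of $U$ via the arithmetic condition, and then appeal to Corollary \ref{cor-newnew1}.

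First I would set $U = \mathrm{Aut}(R) \times \mathrm{Aut}(Q) \times \{id\}$, which is a subgroup of $G$ contained in $S_h \times S_n \times \{id\}$. By the discussion following the definition of $R$-anonymity and $Q$-neutrality (together with Proposition \ref{U,V-corr}), the assumption that $C$ is $R$-anonymous and $Q$-neutral is precisely the statement that $C \in \mathfrak{C}_k^U$. Since $C$ is also decisive by hypothesis, we are in the setting of Corollary \ref{cor-newnew1}, provided we can verify that $U$ is regular.

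Next I would verify regularity of $U$. Since $U$ has the product form $V \times W \times \{id\}$ with $V = \mathrm{Aut}(R) \le S_h$ and $W = \mathrm{Aut}(Q) \le S_n$, Theorem \ref{reg-id} tells us that $U$ is regular if and only if $\gcd(\gamma(\mathrm{Aut}(R)), |\mathrm{Aut}(Q)|) = 1$. This is exactly the hypothesis of the theorem, so $U$ is regular.

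Finally, since $U$ is a regular subgroup of $G$ contained in $S_h \times S_n \times \{id\}$ and $C$ is decisive and $U$-consistent, Corollary \ref{cor-newnew1} yields a $U$-consistent resolute refinement $C'$ of $C$. Unpacking $U$-consistency of $C'$ again via Proposition \ref{U,V-corr} shows that $C'$ is both $R$-anonymous and $Q$-neutral, which is the desired conclusion. There is essentially no obstacle: the whole content has been isolated in Theorem \ref{reg-id} (which converts the arithmetic condition into regularity) and in Corollary \ref{cor-newnew1} (which supplies a $U$-consistent resolute refinement whenever $U$ lies in $S_h \times S_n \times \{id\}$ and is regular), so the proof reduces to a two-line invocation of these two results.
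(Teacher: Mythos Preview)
Your proof is correct and follows essentially the same approach as the paper's own proof, which simply says ``Apply Theorem \ref{reg-id} and Corollary \ref{cor-newnew1}.'' You have merely unpacked these two invocations in detail.
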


\begin{proof}
Apply Theorem \ref{reg-id} and  Corollary \ref{cor-newnew1}.
\end{proof}

\begin{theorem}\label{application4}
Let $V\le S_h$, $W\le S_n$ and assume that
$\gcd(\gamma(V),\mathrm{lcm}(|W|,2))=1$. Then the two following facts are equivalent:
\begin{itemize}
\item[$(i)$] every decisive, $V$-anonymous, $W$-neutral and immune to the reversal bias {\sc $k$-scc} admits a $V$-anonymous, $W$-neutral and immune to the reversal bias resolute refinement;
\item[$(ii)$] one of the following conditions is satisfied:
\begin{itemize}
		\item[$(a)$] for every $\psi\in W$, $\psi$ is not a conjugate of $\rho_0$,
		\item[$(b)$] $n\le 3$,
		\item[$(c)$] $k\in\{1,n-1\}$,
		\item[$(d)$] $n$ is even and $k$ is odd.
	\end{itemize}
\end{itemize}
\end{theorem}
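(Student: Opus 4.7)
Set $U=\mathrm{Aut}(R)\times \mathrm{Aut}(Q)\times \Omega$. My plan is to reduce the statement to a direct application of Theorem \ref{main1} to this particular subgroup of $G$, using Theorem \ref{reg-omega} to handle regularity and the product structure of $U$ to match the four alternatives in (ii).

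First I would recall, from the discussion immediately preceding the theorem, that a $k$-{\sc scc} is $R$-anonymous, $Q$-neutral and immune to the reversal bias if and only if it is $U$-consistent. Next, from the standing hypothesis $\gcd(\gamma(\mathrm{Aut}(R)),\mathrm{lcm}(|\mathrm{Aut}(Q)|,2))=1$ together with Theorem \ref{reg-omega}, I would conclude that $U$ is regular. These two observations translate the statement of Theorem \ref{application4} into exactly the statement of Theorem \ref{main1} applied to $U$, provided that condition (a) of Theorem \ref{main1} coincides with condition (a) of Theorem \ref{application4} for this specific $U$.

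So the remaining point is the translation of the conditions in (ii). Conditions (b), (c) and (d) of Theorem \ref{main1} are identical to (b), (c), (d) here and depend only on $n$ and $k$. For condition (a), since $U=\mathrm{Aut}(R)\times \mathrm{Aut}(Q)\times \Omega$, an element of $U$ with third component $\rho_0$ has the form $(\varphi,\psi,\rho_0)$ with $\varphi\in \mathrm{Aut}(R)$ arbitrary and $\psi\in \mathrm{Aut}(Q)$ arbitrary. Hence the statement "for every $(\varphi,\psi,\rho_0)\in U$, $\psi$ is not a conjugate of $\rho_0$" is logically equivalent to "for every $\psi\in \mathrm{Aut}(Q)$, $\psi$ is not a conjugate of $\rho_0$", which is exactly condition (a) of Theorem \ref{application4}.

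Combining these remarks, the equivalence (i)$\Leftrightarrow$(ii) in Theorem \ref{application4} becomes a particular instance of the equivalence (i)$\Leftrightarrow$(ii) of Theorem \ref{main1}. No additional obstacle arises: the product structure of $U$ makes the matching of condition (a) completely routine, and Theorem \ref{reg-omega} does all the work for translating the gcd hypothesis into regularity. Thus the theorem follows.
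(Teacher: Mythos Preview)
Your proof is correct and follows exactly the same approach as the paper, which simply writes ``Apply Theorem \ref{reg-omega} and Theorem \ref{main1}.'' You have merely unpacked what that application entails: the identification of $R$-anonymity, $Q$-neutrality and immunity to the reversal bias with $U$-consistency for $U=\mathrm{Aut}(R)\times\mathrm{Aut}(Q)\times\Omega$, the use of Theorem \ref{reg-omega} to obtain regularity, and the observation that condition (a) of Theorem \ref{main1} specializes to condition (a) here because the second component of any $(\varphi,\psi,\rho_0)\in U$ ranges exactly over $\mathrm{Aut}(Q)$.
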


\begin{proof}
Apply Theorem \ref{reg-omega} and Theorem \ref{main1}.
\end{proof}

\section{Some applications}\label{app}

Consider a committee having $h$ members which is facing the problem of selecting a set of $k$ alternatives within a set of $n$ alternatives.
The committee members need a suitable procedure for determining the $k$ alternatives from their preferences. If they agree to express their preferences via a linear order on the set of alternatives, they are in fact looking for a resolute $k$-{\sc scc}. Suppose now that there is a general agreement on the fact that the resolute $k$-{\sc scc} to be used should obey to certain fundamental principles which can be summarized in the requirement of being a refinement of a given decisive, anonymous, neutral and immune to the reversal bias $k$-{\sc scc} $C$.\footnote{Thus, $C$ might be, for instance, the trivial, the Pareto, the Borda or the Kemeny  $k$-{\sc scc}. Recall that, those $k$-{\sc scc} are
 $U$-consistent for all $U\le G$.}

In what follows we are going to discuss four particular qualifications of the above described situation
where the theory we developed can be applied and turns out to be useful to the committee.
We stress that the proposed applications only focus on $k$-{\sc scc}s. Suitable adaptations to the framework of {\sc spc}s are natural and left to the reader.

\subsection{Grouping individuals and alternatives }\label{case-1}

Assume that the particular structure of the committee naturally allows to group committee members into subcommittees and that the typology of the alternatives allows to group them into subclasses. Suppose that committee members also agree on the fact that the desired resolute refinement of $C$ should not distinguish among individuals in the same subcommittee and should equally treat alternatives in the same subclass.

Let $r,s\ge 1$ and assume that $H_1,\ldots,H_r\subseteq H$ is the list of subcommittees and that $N_1,\ldots, N_s\subseteq N$ is the list of subclasses so that
$\{H_1,\ldots,H_r\}$ is a partition of $H$
and  $\{N_1,\ldots, N_s\}$ is a partition of $N$. Then, the committee needs to find a resolute refinement of $C$ which is $V$-anonymous and $W$-neutral, where
\[
V=\{\varphi\in S_h: \forall j\in\ldbrack r \rdbrack,\;  \varphi(H_j)=H_j\},
\]
\begin{equation}\label{W-sub}
W=\{\psi\in S_n: \forall j\in\ldbrack s \rdbrack,\;  \psi(N_j)=N_j\}.
\end{equation}
Let us prove now that
\begin{equation}\label{direct-gam-1}
\gamma(V)=\gcd\left(|H_1|,\ldots, |H_r|\right)
\end{equation}
Indeed, consider $\sigma\in V$ and its type $T(\sigma).$ Let $x\in H_j$, for some $j\in\ldbrack r \rdbrack$. Since $\sigma(H_j)=H_j$, we have that $x^{\langle\sigma\rangle}\subseteq H_j$. Thus the sets $H_j$ are union of $\sigma$-orbits.
Hence, the partition $T(\sigma)$ of $h$ induces a partition of $|H_j|$ for all
$j\in \ldbrack r \rdbrack$. It follows that, for every  $j\in \ldbrack r \rdbrack$, $\gcd(T(\sigma))$ divides $|H_j|$ and thus
$\gcd(T(\sigma))$ divides $ \gcd\left(|H_1|,\ldots, |H_r|\right)$. Then we also have that  $\gamma(V)=\mathrm{lcm}\{\gcd(T(\sigma)): \sigma\in V\}$ divides $\gcd\left(|H_1|,\ldots, |H_r|\right)$. If $\gcd\left(|H_1|,\ldots, |H_r|\right)=1,$ then we also have $\gamma(V)=1=\gcd\left(|H_1|,\ldots, |H_r|\right)$. Assume then that
$\gcd\left(|H_1|,\ldots, |H_r|\right)\neq 1.$ Then, in particular, $|H_j|\neq 1$ for all $j\in \ldbrack r \rdbrack$ and it is meaningful to define a cycle
 $\sigma_j$ on its elements. Consider now $\sigma\in S_h$ defined by $\sigma=\sigma_1\cdots \sigma_r.$ Then $T(\sigma)=[|H_1|,\ldots,|H_r|]$ and $\gcd(T(\sigma))=\gcd\left(|H_1|,\ldots, |H_r|\right)$ divides $\gamma(V).$ Thus we obtain \eqref{direct-gam-1}.

Clearly, we also have
\begin{equation}\label{direct-1}
|W|=\prod_{j=1}^s |N_j|!
\end{equation}
Thus, by \eqref{direct-gam-1}, \eqref{direct-1} and Theorem \ref{application3}, we deduce that
if
\begin{equation}\label{condition1}
\gcd\left(|H_1|,\ldots, |H_r|,\prod_{j=1}^s |N_j|!\right)=1,
\end{equation}
then $C$ admits a $V$-anonymous and $W$-neutral resolute refinement.
Moreover, by \eqref{direct-gam-1}, \eqref{direct-1} and Theorem \ref{application4}, we deduce that
if \eqref{condition1} holds true, not all the subclasses are singletons and one of the following conditions is satisfied:
\begin{itemize}
		\item[$(a)$] there exist at least two subclasses of odd size,
		\item[$(b)$] $n\le 3$,
		\item[$(c)$] $k\in\{1,n-1\}$,
		\item[$(d)$] $n$ is even and $k$ is odd,
\end{itemize}
then $C$ admits a $V$-anonymous, $W$-neutral and immune to the reversal bias resolute refinement.

\subsection{A variation on groupings of individuals and alternatives}\label{sec-app}

Assume that the particular structure of the committee naturally allows to group committee members into subcommittees and that the typology of the alternatives allows to group them into subclasses as in Section \ref{case-1}. Suppose, this time, that committee members also agree on the fact that the desired resolute refinement of $C$ should not distinguish among individuals in the same subcommittee, should not distinguish among subcommittees having the same size and should equally treat alternatives in the same subclass.

Using the same notation used in Section \ref{case-1}, let $H_1,\ldots,H_r$ be the list of subcommittees and $N_1,\ldots, N_s\subseteq N$ be the list of subclasses. Then, the committee needs to find a resolute refinement of $C$ which is $V$-anonymous and $W$-neutral, where
\[
V=\{\varphi\in S_h: \forall j\in \ldbrack r \rdbrack, \exists k\in \ldbrack r \rdbrack\mbox{ such that }\varphi(H_j)=H_k\}.
\]
and $W$ is as in \eqref{W-sub}.
Defining now, for every $j\in \ldbrack r \rdbrack$,
\[
t_j=|\{k\in \ldbrack r \rdbrack: |H_k|=|H_j|\}|
\]
we claim that
\begin{equation}\label{direct-gam-1bis}
\gamma(V)=\gcd\left(|H_1|t_1,\ldots, |H_r|t_r\right)
\end{equation}
Indeed, consider $\sigma\in V$. Note that, since $\sigma$ is a bijection, for every $j\in \ldbrack r \rdbrack$, we have $|\sigma(H_j)|=|H_j|$ and thus $\sigma$ maps an element $x\in H_j$ into an element belonging to $U_j=\bigcup_{|H_i|=|H_j|} H_i.$
Thus every $U_j$  is a union of $\sigma$-orbits and the distinct sets $U_j$ give a partition of $H.$. Hence,  the partition $T(\sigma)$ of $h$ induces a partition of $|U_j|=|H_j|t_j$ for all $j\in \ldbrack r \rdbrack$. It follows that, for every  $j\in \ldbrack r \rdbrack$, $\gcd(T(\sigma))\mid |U_j$ and thus
$\gcd(T(\sigma))$ divides $\gcd\left(|U_1,\ldots, |U_r|\right)$ for all $\sigma\in V$. Then we also have that  $\gamma(V)=\mathrm{lcm}\{\gcd(T(\sigma)): \sigma\in V\}$ divides $\gcd\left(|U_1|,\ldots, |U_r|\right)$.
 If $\gcd\left(|U_1|,\ldots, |U_r|\right)=1,$ then we also have $\gamma(V)=1=\gcd\left(|U_1|,\ldots, |U_r|\right)$ and we have finished.
Assume then that  $\gcd\left(|U_1|,\ldots, |U_r|\right)\neq 1.$  Then, in particular, $|U_j|\neq 1$ for all $j\in \ldbrack r \rdbrack$ and it is meaningful to define a special cycle
 $\sigma_j$ on its elements in the following way. Let $H_{i_1},\dots H_{i_t}$ be the distinct sets of size $|H_j|=\ell\geq 1$ whose union is $U_j$ and order the elements inside each of them, say,  $H_{i_1}=\{h^1_{i_1},\dots , h^{\ell}_{i_1}\},\dots , H_{i_t}=\{h^1_{i_t},\dots , h^{\ell}_{i_t}\}$. Define $\sigma_j$ as the $|U_j|$-cycle on the elements of $U_j$ given by $\sigma_j=(h^1_{i_1}\ h^1_{i_2}\ \dots h^1_{i_t}\ h^2_{i_1}\ h^2_{i_2}\ \dots h^2_{i_t}\ \dots h^{\ell}_{i_1}\ h^{\ell}_{i_2}\ \dots h^{\ell}_{i_t})$. It is immediately checked that $\sigma_j(H_{i_{k}})=H_{i_{k+1}}$ for all $k\in \ldbrack t-1 \rdbrack$ while $\sigma_j(H_{i_{t}})=H_{i_{1}}$.
Consider now $\sigma\in S_h$ defined by $\sigma=\sigma_1\cdots \sigma_r.$ Then $\sigma\in V$ and $T(\sigma)=[|U_1|,\ldots,|U_r|]$ and $\gcd(T(\sigma))=\gcd\left(|U_1|,\ldots, |U_r|\right)$ divides $\gamma(V).$ Thus we obtain \eqref{direct-gam-1bis}.

Thus, by \eqref{direct-gam-1bis}, \eqref{direct-1} and Theorem \ref{application3}, we deduce that
if
\begin{equation}\label{condition3}
\gcd\left(|H_1|t_1,\ldots, |H_r|t_r,\prod_{j=1}^s |N_j|!\right)=1,
\end{equation}
then $C$ admits a $V$-anonymous and $W$-neutral resolute refinement.
Moreover, by \eqref{direct-gam-1bis}, \eqref{direct-1} and Theorem \ref{application4}, we deduce that
if \eqref{condition3} holds true, not all the subclasses are singletons and one of the following conditions is satisfied:
\begin{itemize}
		\item[$(a)$] there exist at least two subclasses of odd size,
		\item[$(b)$] $n\le 3$,
		\item[$(c)$] $k\in\{1,n-1\}$,
		\item[$(d)$] $n$ is even and $k$ is odd,
\end{itemize}
then $C$ admits a $V$-anonymous, $W$-neutral and immune to the reversal bias resolute refinement.

\subsection{Alternatives as a subset of individuals}\label{case-forth}

Assume that the committee is facing the problem of electing a subcommittee of size $k$ among those members of the committee who are running for the selection. In this particular situation we have that the alternatives to choose from are a subset of the set of individuals.
Suppose that committee members also agree on the fact that the desired resolute refinement of $C$ should not distinguish among individuals who are candidates, should not distinguish among individuals who are not candidates and should be immune to the reversal bias.

Assume then $n\leq h$ and name $1,\ldots,n$ the individuals running for the selection. Thus $N=\{1,\ldots,n\}\subseteq H$ is the set of
 alternatives to choose from and the procedure the committee is looking for corresponds to a $U$-consistent resolute refinement of $C$, where
\begin{equation}\label{diagonal}
U=\{(\varphi,\psi,\rho)\in G: \forall i\in\ldbrack n \rdbrack,\;  \varphi(i)=\psi(i)\}.
\end{equation}
Note that if $(\varphi,\psi,\rho)\in U$, then $\varphi(H\setminus N)=H\setminus N$. Note also that the above group $U$ is not necessarily regular. Consider, for instance, the case $h=4$ and $n=2$. Then $((1\,2)(3\, 4), (1\,2), id)\in U$ violates Theorem 12 in Bubboloni and Gori (2015). In the next proposition we see that, as usual, a suitable coprimality condition is equivalent to regularity.

\begin{proposition}\label{ind-alt} Let $n\leq h$. The group
$$U=\{(\varphi,\psi,\rho)\in G: \forall i\in\ldbrack n \rdbrack,\;  \varphi(i)=\psi(i)\}$$
is regular if and only if $\gcd(h,n)=1.$
\end{proposition}

\begin{proof} We first describe in a convenient way  the elements in $U.$ Let $(\varphi,\psi,\rho)\in U$
and let
\begin{equation}\label{shape-psi}
\psi=\prod_{i=1}^s\gamma_i,
\end{equation}
where the $\gamma_j$ are disjoint cycles, constituents of $\psi$ and $s\geq 0.$ The case $s=0$ is to be intended as taking the product over the empty set, and by definition is equal to $id.$
Since $\psi\in S_n$ is a permutation of the set $N\subseteq H$ and $\varphi\in S_h$ is a permutation of the set $H$ sharing with $\psi$ the same behaviour on $N$, we have that $\varphi=\psi\nu$ for some  permutation $\nu$ of the set $H\setminus N$. Thus there exist $r\geq 0$ disjoint cycles $\gamma_j$  on the set $H\setminus N$, with $j\in \{s+1,\dots, s+r\}$, such that
\begin{equation}\label{shape-phi}
\varphi=\prod_{i=1}^{s+r}\gamma_i
\end{equation}
and the above writing expresses $\varphi$ as product of its constituents. Note that if $r=0$, the set  $\{s+1,\dots, s+r\}$ is empty and no cycle has to be added.

We claim that the following property holds true:
\begin{equation}\label{property}
\hbox{if}\ (\varphi,\psi,\rho)\in U, \hbox{then}\ \gcd(T(\varphi))\mid \gcd(h,\gcd (T(\psi)))\mid \gcd(h,n).
\end{equation}
Indeed, let $(\varphi,\psi,\rho)\in U$. For $\psi$ we have the representation \eqref{shape-psi} with $s\geq 0$, and for $\varphi$ the representation \eqref{shape-phi} with $r\geq 0.$
If $\psi=id\in S_n$, then $\varphi$ admits at least one fixed point. Thus $\gcd(T(\psi))=1$ and  $\gcd(T(\varphi))=1$ and \eqref{property} is obvious. If $\psi\neq id$, we have $s\geq 1$ and
$$\gcd(T(\varphi))\mid \gcd(|\gamma_{i}|))_{i=1}^s=\gcd(T(\psi))\mid n.$$
On the other hand $\gcd(T(\varphi))\mid h,$ and thus \eqref{property} follows.
Assume now that $\gcd(h,n)=1$. Then, by \eqref{property}, we have $\gcd(T(\varphi))=1$.
In order to show that $U$ is regular, we show that the conditions $a)$ and $b)$ in Theorem 12 in Bubboloni and Gori (2015) are satisfied.
Let $(\varphi,\psi,id)\in U$, with $\psi\neq id$, and $\pi$ be a prime such that $|\psi|_{\pi}=\pi^a$, for some $a\in \mathbb{N}.$ Then obviously $\pi^a\nmid \gcd(T(\varphi))=1.$

Let next $(\varphi,\psi,\rho_0)\in U$, with $\psi^2=id$ and $\psi$ not a conjugate $\rho_0$. Then clearly $2\nmid \gcd(T(\varphi))=1.$
Assume now that $\gcd(h,n)\neq1$. Let $\pi$ be prime dividing $\gcd(h,n).$  Let $\psi\in S_n$ be a be a product of $n/\pi$ disjoint  $\pi$-cycles and $\varphi\in S_h$ be a product of $h/\pi$ disjoint $\pi$-cycles with  the first  $n/\pi$ of them equal to those forming $\psi.$ Then $(\varphi,\psi,id)\in U$ and $|\psi|_{\pi}=\pi\mid \gcd(T(\varphi))=\pi.$
Thus condition a) in Theorem 12 in Bubboloni and Gori (2015) is not satisfied. Hence $U$ is not regular.
\end{proof}

Thus, by Theorem \ref{main1}, we deduce that
if $\gcd(h,n)=1$ and one of the following conditions is satisfied:
\begin{itemize}
		\item[$(b)$] $n\le 3$,
		\item[$(c)$] $k\in\{1,n-1\}$,
		\item[$(d)$] $n$ is even and $k$ is odd,
\end{itemize}
then $C$ admits a $U$-consistent resolute refinement.\footnote{Note that condition $(a)$ in Theorem \ref{main1} cannot be satisfied because
$(\varphi,\rho_0,\rho_0)\in U$, for  any $\varphi\in S_h$ such that $\varphi(i)=\rho_0(i)$ for all $i\in \ldbrack n \rdbrack$.}

Note that the nature of the group $U$ defined by \eqref{diagonal} is very different from the groups of symmetry considered in Sections \ref{case-1} and \ref{sec-app}. In fact, $U$ cannot be expressed as a direct product of subgroups in $S_h$ and $S_n.$ Indeed, assume by contradiction that $U=V\times W\times \Omega$, for suitable $V\leq S_h$ and $W\leq S_n.$ Then for every $(\varphi,\psi,\rho)\in U$ we also have $(id,\psi,\rho)\in U$. But picking  $(\varphi,\psi,\rho)\in U$, with $\psi\neq id,$ we instead necessarily have  $\varphi\neq id$ and thus $(id,\psi,\rho)\notin U.$

\subsection{Committee members with diverse decision power}\label{order}

Assume that committee members have different decision power and that their decision power is described by an order on the set of individuals. Orders  model those situations in which it is reasonable to divide individuals into disjoint groups where all individuals have the same decision power and to rank such groups. Consider, for instance, a faculty committee composed by full and associate professors which is going to select some applicants for an academic position. There are in this case two well distinguished groups in the committee and a natural hierarchy between them ranking first the full professors and second the associate professors. Assume then that committee members agree to use a resolute refinement of
$C$ consistent with the decision power of individuals and which equally treat all the alternatives.

Consider then the relation $R\in\mathbf{O}(H)$ defined as follows: for every $x,y\in H$, we define $x\succeq_R y$ if and only if individual $x$ has a decision power which is at least as great as the one of individual $y$. The procedure the committee is looking for is a resolute refinement of $C$ which is $\mathrm{Aut}(R)$-anonymous and neutral, where $\mathrm{Aut}(R)$ is the group
\[
\mathrm{Aut}(R)=\{\sigma\in S_h: \forall x,y\in H,\, \sigma(x)\succeq_R\sigma(y) \mbox{ if and only if }x\succeq_R y\},
\]
called the group of automorphisms of $R$.

It is well-known that the relation
\[
I(R)=\{(x,y)\in H^2:(x,y)\in R \mbox{ and } (y,x)\in R\}
\]
is an equivalence relation in $H.$ Denote by $H_1,\ldots, H_r$, where $r\ge 1$, the equivalence classes of $I(R)$.
We claim that
\begin{equation}\label{direct}
\mathrm{Aut}(R)=
\{\varphi\in S_h: \forall j\in\ldbrack r \rdbrack,\;  \varphi(H_j)=H_j\}.
\end{equation}
Indeed, first order the sets $H_1,\ldots, H_r$ so that $x\in H_i$ and $y\in H_j$ with $i<j$ if and only if $x\succ_R y$.
The permutations in $K=\{\varphi\in S_h: \forall j\in\ldbrack r \rdbrack,\;  \varphi(H_j)=H_j\}$ clearly form a subgroup of $S_h.$
We show that  $K\leq \mathrm{Aut}(R)$. Pick $\varphi\in K$ and let $x,y\in H$ with $x\succeq_R y$. If we also have
$y\succeq_R x$, then $x$ and $y$ belong to the same $H_j$ and thus, by $K$ definition, $\varphi(x)$ and $\varphi(y)$ belong to $H_j$ too. Thus $\varphi(x)\succeq_R \varphi(y)$. If instead $y\not\succeq_R x$, then $x\succ_R y$. Thus we have $x\in H_i$  and $y\in H_{j}$
for some $i,j\in \ldbrack r \rdbrack$ with $i<j$. Thus, we have
$\varphi(x)\in \varphi(H_i)=H_i$ and $\varphi(y)\in \varphi(H_j)=H_j$ so that, $\varphi(x)\succ_R \varphi(y)$ and then, in particular,
$\varphi(x)\succeq_R \varphi(y)$.  Let now $x,y\in H$ with $\varphi(x)\succeq_R \varphi(y)$. Then, the same argument applies to $\varphi^{-1}$, giving $x\succeq_R y$.

We next show that $\mathrm{Aut}(R)\leq K.$ Let $\varphi\in \mathrm{Aut}(R)$ and  $j\in\ldbrack r \rdbrack.$ We  need to see that  $\varphi(H_j)=H_j$. To that purpose, since $\varphi$ is a bijection, it is enough to show that  $\varphi(H_j)\subseteq H_j.$ Pick $x\in H_j$. Since $H_1,\ldots, H_r$ are a partition of $H$, there exists $i\in\ldbrack r \rdbrack,$ such that $y=\varphi(x)\in H_i.$ Assume, by contradiction, that $i\neq j$. Then, we have that $x\succ_R y$ or $y\succ_R x.$ Assume that $x\succ_R y=\varphi(x)$. Since  $\varphi\in \mathrm{Aut}(R)$, applying  $\varphi$ to that relation, we get $\varphi(x)\succ_R \varphi^2(x)$ and thus, by transitivity $x\succ_R \varphi^2(x)$. Iterating this argument, we then find $x\succ_R \varphi^m(x)$, for every $m\in \mathbb{N}.$ Considering now $m$ such that $\varphi^m(x)=x,$ we get the contradiction $x\succ_R x.$
A similar argument shows the impossibility of $y\succ_R x.$

As a consequence,  by \eqref{direct-gam-1}, $\gamma(\mathrm{Aut}(R))=\gcd\left(|H_1|,\ldots, |H_r|\right)$. Thus, by Theorem \ref{application3},  we deduce that if
\begin{equation}\label{condition2}
\gcd\left(|H_1|,\ldots, |H_r|,|N|!\right)=1,
\end{equation}
then $C$ admits an $\mathrm{Aut}(R)$-anonymous, neutral and resolute refinement.
Moreover,
 by Theorem \ref{application4}, we deduce that
if \eqref{condition2} holds true and one of the following conditions is satisfied:
\begin{itemize}
		\item[$(b)$] $n\le 3$,
		\item[$(c)$] $k\in\{1,n-1\}$,
		\item[$(d)$] $n$ is even and $k$ is odd,
\end{itemize}
then $C$ admits an $\mathrm{Aut}(R)$-anonymous, neutral and immune to the reversal bias resolute refinement.\footnote{ Note that condition $(a)$ in Theorem \ref{application4} cannot be satisfied because obviously $S_n$ contains permutations which are conjugate of $\rho_0.$}

Assume now that the conditions for the existence of $\mathrm{Aut}(R)$-anonymous and neutral resolute refinements of $C$ are satisfied. In general, such refinements of $C$ are more than one. However, the information we get from $R$ goes further the mere knowledge of the indifference sets of $R$ and we can use this information for better selecting the resolute refinement.
In order to better explain this point, assume that the committee has a president, say individual 1, and assume that the president has a decision power which is greater than the one of all the other members of the committee. Assume further that all individuals but the president have the same decision power. Such a situation can be modelled by the order $R\in \mathbf{O}(H)$ defined by
\[
R=\{(1,y)\in H^2: y\in H\}\cup\{(x,y)\in H^2: x,y\in H\setminus \{1\}\}.
\]
In this case, the indifference classes of $I(R)$ are $H_1=\{1\}$ and $H_2=\{2,\ldots,h\}$ so that \eqref{condition2} holds and $C$ admits a resolute refinement which is $\mathrm{Aut}(R)$-anonymous and neutral. As already emphasised resolute refinements can be many and the problem of selecting one of them is certainly crucial. The special characteristics of the situation under consideration can be used to make this selection.
Assume that $k=1$. Then among those resolute refinements it is natural to choose the one obtained by letting the president break the ties by choosing his/her best alternative. This is exactly the classic tie-breaker method widely used in practical situations.
Another resolute refinement can be obtained, for instance, by selecting the alternative which is the worst one for the president. However,
this option cannot be considered consistent with the decision power among individuals and the role of the president.
Assuming now $k\geq 2$, there is no standard way to select a resolute refinement. However, looking at the particular characteristics of the decision problem some resolute refinements can be easily discarded and others can be reasonably taken into consideration.

\vspace{7mm}

\noindent {\Large{\bf References}}

\vspace{2mm}
\noindent Bredereck, R., Faliszewski, P., Igarashi, A., Lackner, M., Skowron, P., 2018. Multiwinner Elections with Diversity Constraints. Proceedings of the Thirty-Second AAAI Conference on Artificial Intelligence (AAAI-18), 933-940.
\vspace{2mm}

\noindent Bubboloni, D., Gori, M., 2014. Anonymous and neutral majority rules.
Social Choice and Welfare 43, 377-401.
\vspace{2mm}

\noindent Bubboloni, D., Gori, M., 2015. Symmetric majority rules. Mathematical Social Sciences 76, 73-86.
\vspace{2mm}

\noindent Bubboloni, D., Gori, M., 2016a. On the reversal bias of the Minimax social choice correspondence.
Mathematical Social Sciences 81, 53-61.
\vspace{2mm}

\noindent Bubboloni, D., Gori, M., 2016b. Resolute refinements of social choice correspondences.
Mathematical Social Sciences 84, 37-49.
\vspace{2mm}

\noindent Campbell, D.E., Kelly, J.S., 2011. Majority selection of one alternative from a binary
agenda. Economics Letters 110, 272-273.
\vspace{2mm}

\noindent Campbell, D.E., Kelly, J.S., 2013. Anonymity, monotonicity, and limited neutrality:
selecting a single alternative from a binary agenda. Economics Letters 118, 10-12.
\vspace{2mm}

\noindent Campbell, D.E., Kelly, J.S., 2015. The finer structure of resolute, neutral, and anonymous social choice
correspondences. Economics Letters 132, 109-111.
\vspace{2mm}

\noindent  Celis, L.E., Huang, L., Vishnoi, N.K., 2018. Multiwinner Voting with Fairness Constraints. Proceedings of the Twenty-Seventh International Joint Conference on Artificial Intelligence (IJCAI-18), 144-151.
\vspace{2mm}

\noindent Chichilnisky, G., 1980. Social choice and the topology of spaces of preferences. Advances in Mathematics 37, 165-176.
\vspace{2mm}

\noindent Do\u gan, O., Giritligil, A.E., 2015. Anonymous and neutral social choice: existence
results on resoluteness. Murat Sertel Center for Advanced Economic Studies,
Working Paper Series 2015–01.
\vspace{2mm}

\noindent E\u gecio\u glu, \" O., 2009. Uniform generation of anonymous and neutral preference profiles for social choice
rules. Monte Carlo Methods and Applications 15, 241-255.
\vspace{2mm}

\noindent E\u gecio\u glu, \" O., Giritligil, A.E., 2013. The Impartial, Anonymous, and Neutral Culture Model: A Probability Model for Sampling Public Preference Structures.
The Journal of Mathematical Sociology 37,  203-222.
\vspace{2mm}

\noindent Elkind, E., Faliszewski, P., Skowron, P., Slinko, A., 2017. Properties of Multiwinner Voting Rules. Social Choice and Welfare 48, 599-632.
\vspace{2mm}

\noindent Faliszewski, P., Skowron, P., Slinko, A., Talmon, N., 2017. Multiwinner Voting: A New Challenge for Social
Choice Theory. In Endriss, U., 2017, {\it Trends in Computational
Social Choice}, chapter 2, pages 27–47. AI Access Foundation.
\vspace{2mm}

\noindent Jacobson, N., 1974. {\it Basic Algebra I }. W.H. Freeman and Company, New York.
\vspace{2mm}

\noindent Kelly, J.S., 1991. Symmetry groups. Social Choice and Welfare 8, 89-95.
\vspace{2mm}

\noindent Lang, G., Skowron, P., 2016. Multi-Attribute Proportional Representation. Proceedings of the Thirtieth AAAI Conference on Artificial Intelligence (AAAI-16), 530-536.
\vspace{2mm}

\noindent Moulin, H., 1983. The Strategy of Social Choice. In: Advanced Textbooks in Economics, North Holland Publishing Company.
\vspace{2mm}

\noindent Moulin, H., 1988. Condorcets principle implies the no-show paradox. Journal of Economic Theory 45, 53-64.
\vspace{2mm}

\noindent Perry, J., Powers, R.C., 2008. Aggregation rules that satisfy anonymity and neutrality. Economics Letters 100, 108-110.
\vspace{2mm}

\noindent Powers, R.C., 2010. Maskin monotonic aggregation rules and partial anonymity. Economics Letters 106, 12-14.
\vspace{2mm}

\noindent Quesada, A., 2013. The majority rule with a chairman. Social Choice and Welfare 40, 679–691.
\vspace{2mm}

\noindent Saari, D.G., 1994. Geometry of Voting. In: Studies in Economic Theory, vol. 3. Springer.
\vspace{2mm}

\noindent Saari, D.G., Barney, S., 2003. Consequences of reversing preferences. The Mathematical Intelligencer 25, 17-31.

\newpage

\appendix

\noindent {\Large{\bf Appendix}}

\section{Symmetry and consistency}\label{sec-tec}

First of all, let us note that, for every $\sigma\in S_n$, $\rho\in\Omega$, $W_1,W_2\in \mathbb{P}(N)$, $\mathbb{W}_1,\mathbb{W}_2\subseteq  \mathbb{P}(N)$,
$R_1,R_2\in\mathbf{R}(N)$ and $\mathbf{Q}_1,\mathbf{Q}_2\subseteq \mathbf{R}(N)$, we have that
\begin{equation}\label{equal-new}
\begin{array}{l}
\sigma W_1\subseteq\sigma W_2 \Leftrightarrow W_1\subseteq W_2,\quad
\sigma \mathbb{W}_1\subseteq \sigma \mathbb{W}_2 \Leftrightarrow \mathbb{W}_1\subseteq \mathbb{W}_2,\\
\vspace{-2mm}\\
R_1\subseteq R_2\Leftrightarrow\sigma R_1\subseteq \sigma R_2\Leftrightarrow R_1\rho \subseteq R_2\rho,\quad
\mathbf{Q}_1\subseteq \mathbf{Q}_2\Leftrightarrow\sigma \mathbf{Q}_1\subseteq \sigma \mathbf{Q}_2\Leftrightarrow \mathbf{Q}_1\rho\subseteq \mathbf{Q}_2\rho.
\end{array}
\end{equation}
From those inclusions, analogous relations for equalities hold true. We freely use those properties in the rest of the paper. We emphasize their use when particularly remarkable.

Let $U\leq G.$ In Bubboloni and Gori (2015, Proposition 2) it is shown that the definition  \eqref{def-azione}, determines an action of $U$ on the set of preference profiles.
In particular, for every $p\in\mathcal{P}$ and $(\varphi_1,\psi_1,\rho_1),(\varphi_2,\psi_2,\rho_2)\in U$, we have
\begin{equation}\label{action-e}
p^{\,(\varphi_1\varphi_2,\psi_1\psi_2,\rho_1\rho_2)}= (p^{\,(\varphi_2,\psi_2,\rho_2)})^{(\varphi_1,\psi_1,\rho_1)}.
\end{equation}

The above equality is a main tool throughout the paper. To begin with it allows to prove the basic results on symmetry and consistency stated in Section \ref{scc}.

\begin{proof}[Proof of Proposition \ref{sym-cons}] $(i)$ Let $C\in \mathfrak{P}^{*U}$ and  $p\in \mathcal{P}$. Consider first $(\varphi,\psi,id)\in U$. Since $C$ is $U$-symmetric, then $C(p^{(\varphi,\psi,id)})=\psi C(p) id=\psi C(p)$ and \eqref{sccU1} is satisfied. In order to prove \eqref{sccU2}
assume, by contradiction, that $|C(p)|=1$ and that there exists $(\varphi,\psi,\rho_0)\in G$  such that
$C(p^{(\varphi,\psi,\rho_0)})=\psi  C(p).$ Then $C(p)=\{q\}$ for some $q\in\mathbf{L}(N)$ and,
by the $U$-symmetry of $C$, we get $\psi C(p)\rho_0=\psi  C(p).$ Thus, $C(p)\rho_0=C(p),$ that is, $q\rho_0=q$, which gives the contradiction $\rho_0=id.$

$(ii)$ From $(i)$ we know that $\mathfrak{P}^{*U}\subseteq \mathfrak{P}^{U}$. On the other hand if $C\in \mathfrak{P}^{U}$  we have that condition \eqref{sccU1}
is satisfied and, since $U\le S_h\times S_n\times \{id\}$, that equals condition \eqref{symU} so that $C\in \mathfrak{P}^{*U}.$
\end{proof}

\begin{proof}[Proof of Proposition \ref{U,V}] The fact that $\mathfrak{P}^{\langle U_1,U_2\rangle}\subseteq \mathfrak{P}^{*U_1}\cap \mathfrak{P}^{*U_2}$ is obvious. We show the other inclusion.
Let $C\in \mathfrak{P}^{*U_1}\cap \mathfrak{P}^{*U_2}$. Consider the set $$W=\{(\varphi,\psi,\rho)\in G: \forall p\in \mathcal{P}, C(p^{(\varphi,\psi,\rho)})=\psi C(p) \rho\}.$$
We show that $W$ is a subgroup of $G$. Let $(\varphi_1,\psi_1,\rho_1), (\varphi_2,\psi_2,\rho_2)\in W$ and we show that $(\varphi_1 \varphi_2,\psi_1 \psi_2,\rho_1\rho_2)\in W.$ Given $p\in \mathcal{P}$, by \eqref{action-e}  and recalling that $\Omega$ is abelian, we have
$$C(p^{(\varphi_1 \varphi_2,\psi_1 \psi_2,\rho_1\rho_2)})=C\left ((p^{\,(\varphi_2,\psi_2,\rho_2)})^{(\varphi_1,\psi_1,\rho_1)}\right)=\psi_1C(p^{\,(\varphi_2,\psi_2,\rho_2)})\rho_1=$$$$\psi_1\psi_2C(p)\rho_2\rho_1=\psi_1\psi_2 C(p)\rho_1\rho_2.$$
Since $W$ is a group and contains both $U_1$ and $U_2$, then we necessarily have $W\geq \langle U_1,U_2\rangle.$ But , by definition of $W$, $C\in \mathfrak{P}^{*W}$ and thus also $C\in \mathfrak{P}^{*\langle U_1,U_2\rangle}.$
\end{proof}

\begin{proof}[Proof of Proposition \ref{U,V-corr}]
The proof is formally the same of Proposition 10 in Bubboloni and Gori (2016b), simply taking into account \eqref{equal-new}. The interested reader can find the proof in Appendix \ref{final}.
\end{proof}

\begin{proof}[Proof of Proposition \ref{sym-cons2}]
Let $C\in \mathfrak{P}^{*U}$ and  $p\in \mathcal{P}$. Consider at first $(\varphi,\psi,id)\in U$. Since $C$ is $U$-symmetric then $C(p^{(\varphi,\psi,id)})=\psi C(p)$, that is, $C(p^{(\varphi,\psi,id)})=\{\psi q \in\mathbf{L}(N): q\in C(p)\}$.
Then
\[
C_k(p^{(\varphi,\psi,id)})=\Big\{\{(\psi q)(r):1\leq r\le k\} \in  \mathbb{P}_k(N): q\in C(p)\Big\}
\]
\[
=\Big\{\psi \{q(r):1\leq r\le k\}\in  \mathbb{P}_k(N): q\in C(p)\Big\}=\psi \Big\{\{q(r):1\leq r\le k\} \in  \mathbb{P}_k(N): q\in C(p)\Big\}=\psi C_k(p),
\]
so that $C_k$ satisfies \eqref{sccU1}. In order to show that $C_k$ satisfies \eqref{sccU2} too,
assume by contradiction that $|C_k(p)|=1$ and that there exists $(\varphi,\psi,\rho_0)\in G$  such that
$C_k(p^{(\varphi,\psi,\rho_0)})=\psi  C_k(p).$ Then there are $k$ distinct elements $x_1,\ldots, x_k$ of $N$ such that $C_k(p)=\{\{x_1,\dots, x_k\}\}$ and
\begin{equation}\label{absurd}
C_k(p^{(\varphi,\psi,\rho_0)})=\{\{\psi(x_1),\dots, \psi(x_k)\}\}.
\end{equation}
In particular, for every $q\in C(p),$ we have $\{q(1),\dots, q(k)\}=\{x_1,\dots, x_k\}$.
By the $U$-symmetry of $C$, we have $C(p^{(\varphi,\psi,\rho_0)})=\psi C(p)\rho_0$ and thus
\[
C_k(p^{(\varphi,\psi,\rho_0)})=\{\{q'(1),\dots, q'(k)\}\in\mathbb{P}_k(N) : q'\in \psi C(p)\rho_0 \}.
\]
Now if $q'\in \psi C(p)\rho_0$ there exists $q\in C(p)$ such that $q'=\psi q \rho_0$ and therefore $q'(1)=\psi q \rho_0(1)=\psi q(n).$ Since $n>k$ and $q$ is a bijection we have that
$q(n)\notin \{q(1),\dots, q(k)\}=\{x_1,\dots, x_k\}.$ Thus, since also $\psi$ is a bijection, we also have $q'(1)=\psi q(n)\notin \{\psi(x_1),\dots, \psi(x_k)\}.$ It follows that no element of $C_k(p^{(\varphi,\psi,\rho_0)})$ can coincide with $\{\psi(x_1),\dots, \psi(x_k)\},$ against \eqref{absurd}.
\end{proof}

\begin{proof}[Proof of Proposition \ref{sym-rel}] We show that,  for every $p\in\mathcal{P}$ and $(\varphi,\psi,\rho)\in U$, $C^R(p^{(\varphi,\psi,\rho)})=\psi C^R(p)\rho$.
Fix  $p\in\mathcal{P}$ and $(\varphi,\psi,\rho)\in U$ and prove first that $\psi C^R(p)\rho \subseteq C^R(p^{(\varphi,\psi,\rho)})$. Pick $q\in C^R(p)$ and show that
$\psi q\rho\in C^R(p^{(\varphi,\psi,\rho)}).$
We have $q\supseteq R(p)$, which implies $\psi q\rho \supseteq \psi R(p)\rho.$ Since $R$ is $U$-symmetric, we then have $\psi q\rho \supseteq R(p^{(\varphi,\psi,\rho)})$ and therefore $\psi q\rho\in C^R(p^{(\varphi,\psi,\rho)}).$

Let us prove now the other inclusion $C^R(p^{(\varphi,\psi,\rho)})\subseteq \psi C^R(p)\rho$. Let $\overline{p}=p^{(\varphi,\psi,\rho)}$ and note that $p=\overline{p}^{(\varphi^{-1},\psi^{-1},\rho)}$. Observe  that, since $U$ is a group and $(\varphi,\psi,\rho)\in U$ also its inverse $(\varphi,\psi,\rho)^{-1}=(\varphi^{-1},\psi^{-1},\rho)\in U.$
Thus, by the inclusion just proved, we get $\psi^{-1}C^R(\overline{p})\rho\subseteq C^R(\overline{p}^{(\varphi^{-1},\psi^{-1},\rho)})$, that is, $\psi^{-1}C^R(p^{(\varphi,\psi,\rho)})\rho\subseteq C^R(p)$ and we conclude applying $\psi$ on the left and $\rho$ on the right to both sides of that inclusion.
\end{proof}

\begin{proof}[Proof of Proposition \ref{U,V-method}]
Repeat word by word the proof of Proposition \ref{U,V} writing $\mathfrak{M}$ instead of $\mathfrak{P}$ and $R$ instead of $C$.
\end{proof}

\section{Proof of Theorems \ref{main3}, \ref{main2} and \ref{main1} }\label{app-C}

The proofs of Theorems \ref{main3}, \ref{main2} and \ref{main1} are definitely  technical and require some preliminary work. We underline that the results we are going to prove are more general as they provide a method to potentially build and count all the resolute refinements.

\subsection{The role of the orbit representatives}

Let $U\leq G$  and $p\in \mathcal{P}$. The set $p^U=\{p^g\in \mathcal{P}: g\in U\}$ is called the $U$-orbit of $p$ and
$\mathrm{Stab}_U(p)=\{g\in U : p^g=p \}\leq U$ is called the stabilizer of $p$ in $U$.
%The stabilizer of $p$ in $U$ evolves in a natural way through the action. Namely,
Recall that, for every  $g\in U$, we have $\mathrm{Stab}_U(p^{g})=\mathrm{Stab}_U(p)^g$.
The set  $\mathcal{P}^U=\{p^U:p\in\mathcal{P}\}$ of the $U$-orbits is a partition of $\mathcal{P}$.
We use $\mathcal{P}^U$ as set of indexes and denote its elements with $j$. A vector $(p^j)_{j\in\mathcal{P}^U}\in\times_{j\in \mathcal{P}^U}\mathcal{P}$
is called a system of representatives of the $U$-orbits if, for every $j\in\mathcal{P}^U$, $p^j\in j$.
The set of the systems of representatives of the $U$-orbits is denoted by $\mathfrak{S}(U)$.
If $(p^j)_{j\in \mathcal{P}^U}\in \mathfrak{S}(U)$, then, for every $p\in \mathcal{P}$, there exist $ j\in\mathcal{P}^U$ and $(\varphi,\psi,\rho)\in U$ such that $p=p^{j\,(\varphi,\psi,\rho)}$. Note that if $p^{j_1\,(\varphi_1,\psi_1,\rho_1)}=p^{j_2\,(\varphi_2,\psi_2,\rho_2)}$ for some $j_1,j_2\in \mathcal{P}^U$ and some $(\varphi_1,\psi_1,\rho_1), (\varphi_2,\psi_2,\rho_2)\in U$, then $j_1= j_2$ and, by \eqref{action-e},
$(\varphi_2^{-1}\varphi_1,\psi_2^{-1}\psi_1,\rho_2^{-1}\rho_1)\in \mathrm{Stab}_U(p^{j_1})$.

\bigskip

In this section we present some results explaining how a correspondence $C\in\mathfrak{P}^{*U}\cup \mathfrak{P}^{U}\cup\mathfrak{C}_k^{U}$ is determined by the values it assumes on a system of representatives of the $U$-orbits in $\mathcal{P}.$ To that purpose, the first step is to split $\mathcal{P}^U$ into two parts $\mathcal{P}_1^U$ and $\mathcal{P}_2^U$, where
\begin{eqnarray*}
&&\mathcal{P}_1^U=\left\{j\in\mathcal{P}^U: \forall p\in j,\, \mathrm{Stab}_U(p)\le S_h\times S_n\times \{id\}\right\},\\
&&\mathcal{P}_2^U=\left\{j\in\mathcal{P}^U: \forall p\in j, \,\mathrm{Stab}_U(p)\not \le S_h\times S_n\times \{id\}\right\}.
\end{eqnarray*}
Note that those sets are well defined because $U\cap (S_h\times S_n\times \{id\})$ is normal in $U$. Moreover  $\mathcal{P}_1^U\cup\mathcal{P}_2^U=\mathcal{P}^U$ and $\mathcal{P}_1^U\cap\mathcal{P}_2^U=\varnothing$.
In particular, $\mathcal{P}_1^U$ and $ \mathcal{P}_2^U$ cannot be both empty. Obviously, if $U\leq S_h\times S_n\times \{id\}$, then $\mathcal{P}_2^U=\varnothing$ and $\mathcal{P}^U=\mathcal{P}_1^U\neq \varnothing.$
We recall a part of Proposition 24 in  Bubboloni and Gori (2016b) which is interesting for our scope\footnote{Further details
can be found in Section 5.2 of Bubboloni and Gori (2016b).}:
\begin{equation}\label{PU2vuoto}
\mathcal{P}_2^U \neq \varnothing  \hbox{ if and only if there exists } (\varphi,\psi,\rho_0)\in U   \hbox{ such that } \psi   \hbox{ is a conjugate of }  \rho_0.
\end{equation}

\begin{proposition}\label{rappresentanti1} Let $U\leq G$ and $C,C'\in \mathfrak{P}^{*U}$. Assume that
there exists $(p^j)_{j\in\mathcal{P}^U }\in \mathfrak{S}(U)$ such that, for every  $j\in \mathcal{P}^U$, $C(p^j)=C'(p^j)$. Then $C=C'$.
\end{proposition}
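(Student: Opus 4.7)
The plan is to show that any two $U$-symmetric {\sc spc}s which agree on a single system of representatives of the $U$-orbits must agree everywhere, by using the $U$-symmetry to transport the equality from orbit representatives to arbitrary preference profiles.

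First I would fix an arbitrary $p \in \mathcal{P}$ and prove $C(p)=C'(p)$. Since $\mathcal{P}^U$ partitions $\mathcal{P}$, there is a unique $j \in \mathcal{P}^U$ with $p \in j$, and by the definition of $\mathfrak{S}(U)$ we have $p^j \in j$ as well. Hence $p$ and $p^j$ lie in the same $U$-orbit, so there exists $(\varphi, \psi, \rho) \in U$ such that $p = (p^j)^{(\varphi,\psi,\rho)}$.

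Next I would invoke the hypothesis $C, C' \in \mathfrak{C}^{*U}$ to write
\[
C(p) = C\bigl((p^j)^{(\varphi,\psi,\rho)}\bigr) = \psi\, C(p^j)\, \rho,
\qquad
C'(p) = C'\bigl((p^j)^{(\varphi,\psi,\rho)}\bigr) = \psi\, C'(p^j)\, \rho.
\]
Using the assumption $C(p^j) = C'(p^j)$, the right-hand sides coincide, and therefore $C(p) = C'(p)$. Since $p \in \mathcal{P}$ was arbitrary, we conclude $C = C'$.

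There is essentially no obstacle: the argument is a direct application of the $U$-symmetry condition \eqref{symU} combined with the fact that $\mathfrak{S}(U)$ chooses exactly one representative per orbit. The only thing to keep in mind is that the products $\psi\, C(p^j)\, \rho$ are well defined thanks to the conventions on products of permutations with sets of relations recalled in Section \ref{plo}, and that the equality $C(p^j)=C'(p^j)$ entails $\psi\, C(p^j)\, \rho = \psi\, C'(p^j)\, \rho$ by the cancellation properties noted in \eqref{equal-new}. No use of the further structure of $U$ (e.g., regularity, or the decomposition $\mathcal{P}_1^U \cup \mathcal{P}_2^U$) is needed here; these will enter only in subsequent propositions dealing with the actual construction of refinements.
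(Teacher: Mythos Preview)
Your proof is correct and follows essentially the same argument as the paper: pick $p\in\mathcal{P}$, write $p=(p^j)^{(\varphi,\psi,\rho)}$ for some $j\in\mathcal{P}^U$ and $(\varphi,\psi,\rho)\in U$, and apply $U$-symmetry on both sides to get $C(p)=\psi C(p^j)\rho=\psi C'(p^j)\rho=C'(p)$. The only superfluous remark is the appeal to \eqref{equal-new} for passing from $C(p^j)=C'(p^j)$ to $\psi C(p^j)\rho=\psi C'(p^j)\rho$; this is an equality of sets, so no cancellation is needed.
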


\begin{proof} Let $p\in \mathcal{P}$ and show that $C(p)=C'(p)$. We know there exist $j\in\mathcal{P}^U$ and $(\varphi,\psi,\rho)\in U$ such that $p=p^{j\,(\varphi,\psi,\rho)}$.
Then, $C(p)=C(p^{j\,(\varphi,\psi,\rho)})=\psi C(p^{j})\rho=\psi C'(p^{j})\rho=C'(p^{j\,(\varphi,\psi,\rho)})=C'(p)$.
\end{proof}

\begin{proposition}\label{rappresentanti2} Let $U\leq S_h\times S_n \times \{id\}$  and
 $C,C'\in \mathfrak{P}^U$ ($C,C'\in \mathfrak{C}_k^U$). Assume that there exists $(p^j)_{j\in\mathcal{P}^U }\in \mathfrak{S}(U)$
 such that $C(p^j)=C'(p^j)$ for all $j\in \mathcal{P}^U$. Then $C=C'$.
\end{proposition}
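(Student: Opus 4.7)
The plan is to mimic the proof of Proposition \ref{rappresentanti1}, exploiting the fact that the hypothesis $U\le S_h\times S_n\times \{id\}$ reduces $U$-consistency to a single clean equivariance formula. Specifically, if $(\varphi,\psi,\rho)\in U$ then necessarily $\rho=id$, so condition \eqref{sccU1} in the definition of $\mathfrak{C}^U$ (respectively $\mathfrak{C}_k^U$) gives, for every $p\in\mathcal{P}$ and every $g=(\varphi,\psi,id)\in U$, the identity $C(p^{g})=\psi C(p)$, and analogously for $C'$. In particular condition \eqref{sccU2} does not intervene at all, which is what makes the two cases (\textsc{spc} and \textsc{$k$-scc}) go through in lockstep.

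Given an arbitrary $p\in \mathcal{P}$, the definition of a system of representatives supplies $j\in \mathcal{P}^U$ and $(\varphi,\psi,id)\in U$ with $p=p^{j\,(\varphi,\psi,id)}$. Applying the equivariance formula just recalled, and then the hypothesis $C(p^j)=C'(p^j)$, I compute
\begin{equation*}
C(p)=C\bigl(p^{j\,(\varphi,\psi,id)}\bigr)=\psi\, C(p^j)=\psi\, C'(p^j)=C'\bigl(p^{j\,(\varphi,\psi,id)}\bigr)=C'(p).
\end{equation*}
Since $p$ was arbitrary, this yields $C=C'$, proving the statement in both the \textsc{spc} and the \textsc{$k$-scc} case (the computation is formally identical, only the codomain of $\psi\,(\cdot)$ changes).

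There is no substantive obstacle: the only point worth a remark is why the argument does not need to invoke condition \eqref{sccU2}. This is precisely because the hypothesis $U\le S_h\times S_n\times\{id\}$ forces every element of $U$ to have third coordinate $id$, so the ``forbidden coincidence'' clause \eqref{sccU2} is vacuous for elements of $U$ and the single rule \eqref{sccU1} is enough to transport values of $C$ and $C'$ from representatives to arbitrary profiles. (For the \textsc{spc} case one could alternatively invoke Proposition \ref{sym-cons}(ii) to reduce to Proposition \ref{rappresentanti1}, but the direct computation above covers both correspondence types uniformly.)
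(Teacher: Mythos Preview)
Your proof is correct and essentially identical to the paper's: both pick an arbitrary $p$, write $p=p^{j\,(\varphi,\psi,id)}$ for some representative $p^j$ and some $(\varphi,\psi,id)\in U$, and run the chain $C(p)=\psi C(p^j)=\psi C'(p^j)=C'(p)$ using \eqref{sccU1}. Your additional remarks on why \eqref{sccU2} is vacuous here and on the alternative route via Proposition~\ref{sym-cons}(ii) are accurate but not needed for the argument.
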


The proof of the above result is the same as the one of Proposition 12 in Bubboloni and Gori (2016b). The interested reader can be find that proof in Appendix \ref{final}.

\begin{proposition}\label{rappresentanti3} Let  $U\leq G$ such that $U\not\le S_h\times S_n\times \{id\}$,
 $C,C'\in \mathfrak{P}^U$ ($C,C'\in \mathfrak{C}_k^U$). Assume that there exist $(p^j)_{j\in\mathcal{P}^U }\in \mathfrak{S}(U)$ and $(\varphi_*,\psi_*,\rho_0)\in U$
 such that $C(p^j)=C'(p^j)$ for all $j\in \mathcal{P}^U$ and $ C(p^{j\,(\varphi_*,\psi_*,\rho_0)})=C'(p^{j\,(\varphi_*,\psi_*,\rho_0)})$ for all $j\in \mathcal{P}_1^U$. Then $C=C'.$
\end{proposition}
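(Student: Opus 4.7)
The plan is to fix an arbitrary $p\in\mathcal{P}$ and show $C(p)=C'(p)$. Since $(p^j)_{j\in\mathcal{P}^U}$ is a system of representatives, there exist $j\in\mathcal{P}^U$ and $g=(\varphi,\psi,\rho)\in U$ with $p=p^{jg}$. I will split the argument according to the value of $\rho\in\Omega$ and, in the $\rho=\rho_0$ case, further according to whether $j$ lies in $\mathcal{P}_1^U$ or $\mathcal{P}_2^U$. The {\sc $k$-scc} case will be handled identically because only \eqref{sccU1} is used.

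If $\rho=id$, then $g\in U\cap(S_h\times S_n\times\{id\})$, and the defining property \eqref{sccU1} of $U$-consistency gives $C(p)=\psi C(p^j)=\psi C'(p^j)=C'(p)$, using the hypothesis $C(p^j)=C'(p^j)$.

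If $\rho=\rho_0$, the strategy is to rewrite $p$ as the image, under an element of $U$ whose third coordinate is $id$, of a profile on which $C$ and $C'$ are already known to agree, so that \eqref{sccU1} still applies. When $j\in\mathcal{P}_2^U$, by definition $\mathrm{Stab}_U(p^j)$ contains some $g_*=(\tilde\varphi,\tilde\psi,\rho_0)$; since $\rho_0^2=id$, the element $gg_*^{-1}=(\varphi\tilde\varphi^{-1},\psi\tilde\psi^{-1},id)$ belongs to $U$, and the identity \eqref{action-e} together with $p^{jg_*}=p^j$ yields $p=(p^j)^{gg_*^{-1}}$, so applying \eqref{sccU1} transports $C(p^j)=C'(p^j)$ to $C(p)=C'(p)$. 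When $j\in\mathcal{P}_1^U$, I instead use the globally given element $g_*=(\varphi_*,\psi_*,\rho_0)\in U$ and set $p^*=p^{jg_*}$, for which $C(p^*)=C'(p^*)$ is part of the hypothesis; again $gg_*^{-1}\in U$ has third coordinate $id$, and \eqref{action-e} gives $p=(p^*)^{gg_*^{-1}}$, whence \eqref{sccU1} closes the argument.

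The main obstacle is that $U$-consistency, unlike $U$-symmetry, provides no identity relating $C(p^{(\varphi,\psi,\rho_0)})$ to $C(p)$, which is why the direct transport-along-the-orbit argument used in Proposition \ref{rappresentanti1} is unavailable here. The role of the splitting $\mathcal{P}^U=\mathcal{P}_1^U\sqcup\mathcal{P}_2^U$ is precisely to identify, for each orbit, a witness in $U$ with third coordinate $\rho_0$: internally as a nontrivial stabilizer element when $j\in\mathcal{P}_2^U$, and externally via the uniform pair $(\varphi_*,\psi_*,\rho_0)$ together with the extra hypothesis on $p^{jg_*}$ when $j\in\mathcal{P}_1^U$. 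In either case, that witness is multiplied into $g$ to cancel the $\rho_0$ in the third coordinate, reducing everything to the $\rho=id$ case already settled.
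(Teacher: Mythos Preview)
Your proof is correct and follows essentially the same approach as the paper's: both reduce everything to an application of \eqref{sccU1} by rewriting $p$ as the image of a profile on which $C$ and $C'$ agree under an element of $U$ with third coordinate $id$. The only cosmetic difference is the case organization---the paper splits on whether \emph{some} $g$ with $\rho=id$ carries $p^j$ to $p$ and then shows the negative case forces $j\in\mathcal{P}_1^U$, whereas you fix one $g$, split on its $\rho$, and in the $\rho=\rho_0$, $j\in\mathcal{P}_2^U$ subcase explicitly manufacture the $id$-representation via the stabilizer element; the underlying argument is identical.
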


The proof of the above result is formally the same as the one of Proposition 13 in Bubboloni and Gori (2016b). The interested reader can be find that proof in Appendix \ref{final}.

\subsection{Resolute {\sc spc}s and {\sc $k$-scc}s }

We have defined $C\in\mathfrak{P}$ ($C\in\mathfrak{C}_k$) resolute if, for every
$p\in\mathcal{P}$, $|C(p)|=1$. We now denote by $\mathfrak{F}$ ($\mathfrak{F}_k$) the set of resolute {\sc spc} ({\sc $k$-scc}). Obviously, we have $\mathfrak{F}\subseteq \mathfrak{P}$ and $\mathfrak{F}_k\subseteq \mathfrak{C}_k$ and, for every $U\leq G$, we can consider the following sets
\[
\mathfrak{F}^{*U}=\mathfrak{F}\cap \mathfrak{P}^{*U},\qquad \mathfrak{F}^U=\mathfrak{F}\cap \mathfrak{P}^U,\qquad \mathfrak{F}_k^U=\mathfrak{F}_k\cap  \mathfrak{C}_k^U.
\]
They describe, respectively, the set of the $U$-symmetric resolute {\sc spc},  the set of the $U$-consistent resolute {\sc spc}, the set of the $U$-consistent resolute {\sc $k$-scc}.

 Given $C\in \mathfrak{F}$, for every $p\in \mathcal{P}$, there exists a unique $q\in \mathbf{L}(N)$ such that $C(p)=\{q\}.$ Thus $C$ can be naturally identified with the {\it social preference function} ({\sc spf}) $f$ from $\mathcal{P}$ to $\mathbf{L}(N)$ defined, for every $p\in\mathcal{P}$,  by $f(p)=q.$
Similarly, given $C\in \mathfrak{F}_k$, for every $p\in \mathcal{P}$, there exists a unique $W\in \mathbb{P}_k(N)$ such that $C(p)=\{W\}.$ Thus $C$ can be naturally identified with the {\it $k$-multiwinner social choice function} ({\sc $k$-scf}) $f$ from $\mathcal{P}$ to $\mathbb{P}_k(N)$ defined, for every $p\in\mathcal{P}$,  by $f(p)=W.$
We will freely adopt those identifications and the language of functions in what follows. For that reason we will refer to $\mathfrak{F}^{*U}$ also as the set of the $U$-symmetric
{\sc spf}s; to $\mathfrak{F}^U$ also as the set of the $U$-consistent {\sc spf}s; to  $\mathfrak{F}_k^U$ also as the set of the $U$-consistent {\sc $k$-scf}s.

Let $C\in\mathfrak{P}$ ($C\in\mathfrak{C}_k$).  Denote the refinements of $C$ by $\mathfrak{P}_C$ ($\mathfrak{C}_{k,C}$). Then the resolute refinements of $C$ are the functions in the set $\mathfrak{F}_C=\mathfrak{F}\cap \mathfrak{P}_C$ ($\mathfrak{F}_{k,C}=\mathfrak{F}_k\cap \mathfrak{C}_{k,C}$).

Let now $U\leq G$. Given  $C\in\mathfrak{P}$, we consider the following sets
$$\mathfrak{F}^{*U}_C=\mathfrak{F}^{*U}\cap\mathfrak{F}_C,\qquad \mathfrak{F}_C^U=\mathfrak{F}^{U}\cap\mathfrak{F}_C.$$
They describe, respectively, the set of the $U$-symmetric {\sc spf}s which are refinements of $C$ and the set of the $U$-consistent {\sc spf}s which are refinements of $C.$
Given $C\in\mathfrak{C}_k$, we finally consider the set
$$\mathfrak{F}_{k,C}^U=\mathfrak{F}_{k}^U\cap\mathfrak{F}_{k,C}$$
describing the set of $U$-consistent {\sc $k$-scf}s which are refinements of $C$.

\subsection{Proof of Theorem \ref{main3}}
In order to approach the proof of Theorem \ref{main3} we need some preliminary facts.
Let $U\leq G$  and $S\in\mathfrak{P}$ be defined, for every $p\in\mathcal{P}$, by
\[
S(p)=\big\{q\in \mathbf{L}(N): \forall (\varphi,\psi,\rho)\in \mathrm{Stab}_U(p),\, \psi q \rho=q\big\}.\footnote{The {\sc spc} $S$ was first considered in Bubboloni and Gori (2015, Section 4.1) and there denoted by $S_1^U$.}
\]

\begin{proposition}\label{s} Let $U\leq G$. Then the following facts hold:
\begin{itemize}
\item[$(i)$] $S$ is decisive if and only if $U$ is regular.
\item[$(ii)$] $S\in \mathfrak{P}^{*U}.$
\item[$(iii)$] If $f\in \mathfrak{F}^{*U}$, then $f\in \mathfrak{F}_S$.
\end{itemize}
\end{proposition}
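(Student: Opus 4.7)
My plan is to treat the three parts in order, since (i) is the heart of the matter and (ii), (iii) reduce to direct computations once the right formulation is in place. Throughout I will use the identification $\mathbf{L}(N)\simeq S_n$ recorded in Section~\ref{model}, together with the cancellation law in $S_n$ and the fact that $\Omega$ is abelian of exponent $2$ (so $\rho^{-1}=\rho$ for every $\rho\in\Omega$).

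For part (i), I will first translate the defining condition $\psi q\rho=q$, viewed inside $S_n$, into two mutually exclusive cases. If $\rho=id$ the equation $\psi q=q$ forces $\psi=id$ by right cancellation; if $\rho=\rho_0$ the equation $\psi q\rho_0=q$ is equivalent to $\psi=q\rho_0 q^{-1}$, i.e.\ $\psi$ is the conjugate of $\rho_0$ by $q$. Thus $q\in S(p)$ iff (a) every $(\varphi,\psi,id)\in\mathrm{Stab}_U(p)$ has $\psi=id$ and (b) every $(\varphi,\psi,\rho_0)\in\mathrm{Stab}_U(p)$ has $\psi=q\rho_0 q^{-1}$. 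Assuming $U$ is regular, pick $\psi_*$ conjugate to $\rho_0$ with $\mathrm{Stab}_U(p)\subseteq(S_h\times\{id\}\times\{id\})\cup(S_h\times\{\psi_*\}\times\{\rho_0\})$; choose any $q\in S_n$ with $\psi_*=q\rho_0 q^{-1}$ (which exists because $\psi_*$ lies in the conjugacy class of $\rho_0$), and this $q$ satisfies (a) and (b), giving $S(p)\neq\varnothing$. Conversely, if $S(p)\neq\varnothing$ for every $p$, then picking $q\in S(p)$ and setting $\psi_*=q\rho_0 q^{-1}$ (a conjugate of $\rho_0$) witnesses the regularity of $U$ at $p$.

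For part (ii), I fix $g=(\varphi,\psi,\rho)\in U$ and $p\in\mathcal{P}$ and verify $S(p^g)=\psi S(p)\rho$ by rewriting. Using $\mathrm{Stab}_U(p^g)=g\,\mathrm{Stab}_U(p)\,g^{-1}$, a typical element has the form $g(\varphi_1,\psi_1,\rho_1)g^{-1}=(\varphi\varphi_1\varphi^{-1},\,\psi\psi_1\psi^{-1},\,\rho\rho_1\rho)$, and the last coordinate simplifies to $\rho_1$ because $\Omega$ is abelian and $\rho^2=id$. Hence $q\in S(p^g)$ iff $\psi\psi_1\psi^{-1}\,q\,\rho_1=q$ for every $(\varphi_1,\psi_1,\rho_1)\in\mathrm{Stab}_U(p)$. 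Setting $q''=\psi^{-1}q\rho$ (again using $\rho^{-1}=\rho$) and multiplying on the left by $\psi^{-1}$ and on the right by $\rho$, this condition becomes $\psi_1 q''\rho_1=q''$ for every stabilizer element, i.e.\ $q''\in S(p)$. Thus $S(p^g)=\{\psi q''\rho:q''\in S(p)\}=\psi S(p)\rho$.

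For part (iii), let $f\in\mathfrak{F}^{*U}$ and fix $p\in\mathcal{P}$ and $(\varphi,\psi,\rho)\in\mathrm{Stab}_U(p)$. Since $f$ is $U$-symmetric and resolute, viewing $f(p)$ as the unique element of its image,
\[
f(p)=f(p^{(\varphi,\psi,\rho)})=\psi f(p)\rho,
\]
so $f(p)\in S(p)$, which is precisely the assertion that $f$ refines $S$.

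I expect part (i) to be the only step requiring care: the cancellation argument that distills $\psi q\rho=q$ into the two alternatives "$\psi=id$" or "$\psi$ conjugate to $\rho_0$ via $q$" is the bridge between the group-theoretic regularity condition and the non-emptiness of $S(p)$. Parts (ii) and (iii) are then essentially bookkeeping, relying on the single algebraic fact that $\Omega$ is abelian with $\rho^{-1}=\rho$ and on the $U$-symmetry of $f$.
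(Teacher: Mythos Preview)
Your proof is correct and follows essentially the same line as the paper's. Parts (ii) and the converse direction of (i) are virtually identical to the paper's argument (conjugation of stabilizers, cancellation in $S_n$, $\Omega$ abelian of exponent $2$); for (iii) the paper simply cites an earlier lemma, and your one-line computation is exactly what that lemma says.

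The only noteworthy difference is in the forward direction of (i). The paper appeals to a structural result from Bubboloni--Gori (2015) which explicitly computes $S(p)$ as either all of $\mathbf{L}(N)$ or a coset $u\,C_{S_n}(\rho_0)$, and then reads off non-emptiness from the cardinality formula. You instead argue directly: once $\psi_*$ is given by regularity, any $q\in S_n$ with $q\rho_0 q^{-1}=\psi_*$ lies in $S(p)$. This is more elementary and self-contained, and it is all that is actually needed for decisiveness; the paper's fuller description of $S(p)$ is used later (in Theorem \ref{main}) but is not required for the present proposition.
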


\begin{proof}$(i)$ Let $U$ be regular. Proving \eqref{teorema 7},  in Bubboloni and Gori (2015), the authors showed  that
\[
S(p)
=\left\{
\begin{array}{ll}
\mathbf{L}(N) & \mbox{if }\;\mathrm{Stab}_U(p)\leq S_h\times\{id\}\times\{id\}\\
\vspace{-2mm}\\
u  C_{S_n}(\rho_0)     & \mbox{if }\;\mathrm{Stab}_U(p)\not\leq S_h\times\{id\}\times\{id\},\\
\end{array}
\right.
\]
where $C_{S_n}(\rho_0)$ denotes the centralizer of $\rho_0$ in $S_n$ and $u\in S_n$ is such that  $\psi_*=u\rho_0 u^{-1}$, with $\psi_*\in S_n$ the permutation appearing in the definition \eqref{reg-def} of regularity.
In particular,
 \begin{equation}\label{ordS}
|S(p)|
=\left\{
\begin{array}{ll}
n! & \mbox{if }\;\mathrm{Stab}_U(p)\leq S_h\times\{id\}\times\{id\}\\
\vspace{-2mm}\\
  2^{\lfloor \frac{n}{2}\rfloor} \lfloor \frac{n}{2}\rfloor !     &   \mbox{if }\;\mathrm{Stab}_U(p)\not\leq S_h\times\{id\}\times\{id\}.\\
\end{array}\right.
\end{equation}
Thus the fact that $S$ is decisive immediately follows from \eqref{ordS}.

Assume now that $S$ is decisive. We need to show that, for every $p\in \mathcal{P}$, $(\varphi, \psi, id)\in \mathrm{Stab}_U(p)$ implies $\psi=id$, and $(\varphi, \psi, \rho_0)\in \mathrm{Stab}_U(p)$ implies $\psi=\psi_*$ for a suitable unique conjugate $\psi_*$ of $\rho_0.$
 Let $p\in \mathcal{P}$ and pick $q_0\in S(p)$. If $(\varphi, \psi, id)\in \mathrm{Stab}_U(p)$, then we have $\psi q_0=q_0$ and thus, by cancellation, $\psi =id.$ If $(\varphi, \psi, \rho_0)\in \mathrm{Stab}_U(p)$, then  we have $\psi q_0 \rho_0=q_0$, which implies $\psi=q_0\rho_0q_0^{-1}$ (recall that the linear order $q_0$ is identified with an element of $S_n$). Thus, $\psi_*=q_0\rho_0q_0^{-1}$ works.

$(ii)$ In order to show that $S$ is $U$-symmetric, we pick $(\varphi_1,\psi_1,\rho_1)\in U$ and see that
$S(p^{(\varphi_1,\psi_1,\rho_1)})=\psi_1S(p)\rho_1.$ Recall that
$\mathrm{Stab}_U(p^{(\varphi_1,\psi_1,\rho_1)})= \mathrm{Stab}_U(p)^{(\varphi_1,\psi_1,\rho_1)}.$
Thus, $q\in S(p^{(\varphi_1,\psi_1,\rho_1)})$ if and only if, for every $(\varphi,\psi,\rho)\in \mathrm{Stab}_U(p)$, $\psi_1\psi\psi_1^{-1}q\rho_1\rho\rho_1^{-1}=q$, which is equivalent to $\psi(\psi_1^{-1}q\rho_1)\rho=\psi_1^{-1}q\rho_1,$ that is, to $\psi_1^{-1}q\rho_1\in S(p)$ and thus to $q\in \psi_1S(p)\rho_1.$

$(iii)$ This is just Lemma 4 in Bubboloni and Gori (2015).
\end{proof}

By the above proposition, any $U$-symmetric {\sc spf} maps the profile $p$ into an element of $S(p)$. The next result shows that, conversely, one can construct $f\in \mathfrak{F}^{*U}$ just fixing a system  $(p^j)_{j\in \mathcal{P}^U}$ of representatives and assigning within $S(p^j)$ the value to be assumed on each $p^j$. Its proof  is formally equal to Proposition 5 in Bubboloni and Gori (2015) and can be find in Appendix \ref{final}.

\begin{proposition}\label{existence} Let $U\leq G$ be regular and  $(p^j)_{j\in \mathcal{P}^U}\in\mathfrak{S}(U)$. For every $j\in\mathcal{P}^U$, let $q_j\in S(p^j)$.
Then there exists a unique $f\in \mathfrak{F}^{*U}$ such that,
for every $j\in\mathcal{P}^U$, $f(p^j)=q_j$.
\end{proposition}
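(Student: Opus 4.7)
The plan is to use the system of representatives $(p^j)_{j \in \mathcal{P}^U}$ and the prescribed values $q_j$ to define $f$ by the natural transport formula, and then verify well-definedness, $U$-symmetry, and uniqueness in turn.

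\emph{Uniqueness.} Suppose $f_1, f_2 \in \mathfrak{F}^{*U}$ both satisfy $f_i(p^j) = q_j$ for every $j \in \mathcal{P}^U$. Viewed as elements of $\mathfrak{C}^{*U}$, they agree on the system of representatives $(p^j)_{j \in \mathcal{P}^U}$, so Proposition \ref{rappresentanti1} yields $f_1 = f_2$.

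\emph{Existence.} Given $p \in \mathcal{P}$, choose $j \in \mathcal{P}^U$ and $(\varphi, \psi, \rho) \in U$ with $p = p^{j\,(\varphi,\psi,\rho)}$, and set $f(p) = \psi q_j \rho$. The first task is to prove this is independent of the chosen decomposition. If $p = p^{j_1\,(\varphi_1,\psi_1,\rho_1)} = p^{j_2\,(\varphi_2,\psi_2,\rho_2)}$, then since the $U$-orbits partition $\mathcal{P}$ we have $j_1 = j_2 =: j$, and by \eqref{action-e} the triple $(\varphi, \psi, \rho) := (\varphi_2^{-1}\varphi_1, \psi_2^{-1}\psi_1, \rho_2^{-1}\rho_1)$ lies in $\mathrm{Stab}_U(p^j)$. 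Since $q_j \in S(p^j)$, the defining property of $S$ gives $\psi q_j \rho = q_j$, i.e.\ $\psi_2^{-1}\psi_1 q_j \rho_2^{-1}\rho_1 = q_j$. Using that $\Omega$ is abelian (so $\rho_1^{-1}\rho_2 \rho_1 = \rho_2$), this equation rearranges to $\psi_1 q_j \rho_1 = \psi_2 q_j \rho_2$, exactly as needed. This is the key step and the only real point where one must be careful.

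\emph{$U$-symmetry of $f$.} Let $p \in \mathcal{P}$ and $(\varphi', \psi', \rho') \in U$. Writing $p = p^{j\,(\varphi,\psi,\rho)}$, equation \eqref{action-e} gives $p^{(\varphi',\psi',\rho')} = p^{j\,(\varphi'\varphi,\psi'\psi,\rho'\rho)}$, so
\[
f\bigl(p^{(\varphi',\psi',\rho')}\bigr) = (\psi'\psi)\, q_j\, (\rho'\rho) = \psi'\, (\psi q_j \rho)\, \rho' = \psi' f(p) \rho',
\]
where the middle equality uses the commutativity of $\Omega$. Since $f(p) \in \mathbf{L}(N)$ is a singleton linear order, $f \in \mathfrak{F}$, and by construction $f(p^j) = q_j$ for each $j$ (take $(\varphi,\psi,\rho) = (id,id,id)$). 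The main obstacle throughout is the compatibility check for well-definedness, which is precisely where the regularity hypothesis enters, through Proposition \ref{s}(i) guaranteeing that $S(p^j) \ne \varnothing$ so that the prescribed values $q_j$ actually exist.
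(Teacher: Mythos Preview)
Your proof is correct and follows essentially the same approach as the paper's own proof: define $f$ via the transport formula $f(p)=\psi q_j\rho$, check well-definedness using $q_j\in S(p^j)$ and the commutativity of $\Omega$, verify $U$-symmetry via \eqref{action-e}, and deduce uniqueness from Proposition~\ref{rappresentanti1}. The only cosmetic difference is the order of presentation (you do uniqueness first) and your explicit remark on where regularity enters.
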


Given now $(p^j)_{j\in \mathcal{P}^U}\in\mathfrak{S}(U)$, let $$\Phi: \times_{j\in\mathcal{P}^U} S(p^j)	\to \mathfrak{F}^{*U}$$
be the function which associates with every  $(q_j)_{j\in \mathcal{P}^U}\in \times_{j\in\mathcal{P}^U} S(p^j)$ the unique $f\in\mathfrak{F}^{*U}$ defined in Proposition \ref{existence}.
Of course, $\Phi$ depends on $(p^j)_{j\in \mathcal{P}^U}$ but we do not emphasize that dependence in the notation. Note that $\Phi$ is injective.

\begin{theorem}\label{main}
Let $U\le G$ be regular, $(p^j)_{j\in \mathcal{P}^U}\in\mathfrak{S}(U)$ and $C\in\mathfrak{P}^{*U}.$ Then
\[
\mathfrak{F}^{*U}_C=\Phi\left( \times_{j\in\mathcal{P}^U} S(p^j)\cap C(p^j)\right)
\]
and
\[
|\mathfrak{F}^{*U}_C|=\prod_{j\in\mathcal{P}^U} |S(p^j)\cap C(p^j)|.
\]
\end{theorem}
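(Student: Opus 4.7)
The plan is to prove the two displayed equalities by a direct double inclusion (for the set-theoretic equality) and then appeal to the injectivity of $\Phi$ (for the cardinality). The map $\Phi$ is already known to be well-defined and injective by Proposition \ref{existence}, so the content of the theorem is the characterization of its image.

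For the inclusion $\mathfrak{F}^{*U}_C \subseteq \Phi\bigl(\times_{j\in\mathcal{P}^U} S(p^j)\cap C(p^j)\bigr)$, I would take $f \in \mathfrak{F}^{*U}_C$, which means $f$ is a $U$-symmetric resolute \textsc{spf} refining $C$. By Proposition \ref{s}(iii), $f \in \mathfrak{F}_S$, i.e., $f(p) \in S(p)$ for every $p \in \mathcal{P}$; since $f$ refines $C$, also $f(p) \in C(p)$. In particular, setting $q_j = f(p^j)$ for each $j \in \mathcal{P}^U$, we get $(q_j) \in \times_{j\in\mathcal{P}^U} S(p^j)\cap C(p^j)$. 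By the uniqueness clause in Proposition \ref{existence}, $f = \Phi((q_j))$, which gives the claimed inclusion.

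For the reverse inclusion, let $(q_j) \in \times_{j\in\mathcal{P}^U} S(p^j)\cap C(p^j)$ and set $f = \Phi((q_j))$. By construction $f \in \mathfrak{F}^{*U}$ with $f(p^j) = q_j \in C(p^j)$. I have to check that $f$ refines $C$ on \emph{every} profile, not merely on the chosen representatives. Given $p \in \mathcal{P}$, write $p = (p^j)^{(\varphi,\psi,\rho)}$ for some $j \in \mathcal{P}^U$ and $(\varphi,\psi,\rho) \in U$. Using the $U$-symmetry of $f$ and of $C$ (the latter by hypothesis $C \in \mathfrak{C}^{*U}$),
\[
\{f(p)\} = f\bigl((p^j)^{(\varphi,\psi,\rho)}\bigr) = \psi\{q_j\}\rho \subseteq \psi C(p^j)\rho = C\bigl((p^j)^{(\varphi,\psi,\rho)}\bigr) = C(p),
\]
where in the last chain I silently use the identification of the singleton $\{f(p^j)\}$ with $f(p^j)$ and the properties of the action recalled in Section \ref{plo} (so that $\psi\{q_j\}\rho = \{\psi q_j \rho\}$). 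Hence $f(p) \in C(p)$, proving $f \in \mathfrak{F}^{*U}_C$.

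The cardinality formula is then immediate: since $\Phi$ is injective (as remarked right after its definition), the restriction of $\Phi$ to the product $\times_{j\in\mathcal{P}^U}\bigl(S(p^j)\cap C(p^j)\bigr)$ is a bijection onto $\mathfrak{F}^{*U}_C$, and the size of a finite Cartesian product is the product of the sizes. The only delicate point in the argument is making sure the refinement property propagates from the representatives to all profiles; this is precisely where the joint $U$-symmetry of $f$ and $C$ is used, and it is the step that would fail without the hypothesis $C \in \mathfrak{C}^{*U}$.
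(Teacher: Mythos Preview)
Your proof is correct and follows essentially the same approach as the paper: the same double inclusion using Proposition~\ref{s}(iii) for one direction and the joint $U$-symmetry of $f$ and $C$ for the other, with the cardinality formula coming from the injectivity of $\Phi$. The only cosmetic differences are that the paper treats the two inclusions in the opposite order and invokes Proposition~\ref{rappresentanti1} directly rather than the uniqueness clause of Proposition~\ref{existence} (which is proved via Proposition~\ref{rappresentanti1} anyway).
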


\begin{proof}We first prove that $\Phi\left( \times_{j\in\mathcal{P}^U} S(p^j)\cap C(p^j)\right)\subseteq \mathfrak{F}^{*U}_C$.
Let $(q_j)_{j\in\mathcal{P}^U}\in \times_{j\in\mathcal{P}^U} S(p^j)\cap C(p^j)$ and $f=\Phi\left((q_j)_{j\in\mathcal{P}^U}\right)$. We show that
$f\in\mathfrak{F}^{*U}_C$. We know that $f\in \mathfrak{F}^{*U}$, so that have are left with showing that $f\in \mathfrak{F}_C$.
Given $p\in\mathcal{P}$, there exist $j\in \mathcal{P}^U$ and $(\varphi,\psi,\rho)\in U$ such that $p=p^{j\,(\varphi,\psi,\rho)}$. As we know that $q_j=f(p^j)\in   C(p^j)$,
by $U$-symmetry of $f$ and $C$, we have
$$
f(p)=f(p^{j\,(\varphi,\psi,\rho)})=\psi q_j \rho\in  \psi  C(p^j)\rho = C(p^{j\,(\varphi,\psi,\rho)})=C(p)
$$
as desired.

Let us next prove that $\mathfrak{F}^{*U}_C   \subseteq   \Phi\left( \times_{j\in\mathcal{P}^U} S(p^j)\cap C(p^j)\right)$.
Consider then $f\in \mathfrak{F}^{*U}_C$ and note that, by Proposition \ref{s}(iii), for every $j\in \mathcal{P}^U$, $f(p^j)\in S(p^j)\cap C(p^j)$. Then
$( f(p^j))_{j\in\mathcal{P}^U}\in  \times_{j\in\mathcal{P}^U} S(p^j)\cap C(p^j).$ Thus the function $f$ and the function $\Phi\left(( f(p^j))_{j\in\mathcal{P}^U}\right)$ are $U$-symmetric functions which coincide on a system of representatives. Hence, by Proposition \ref{rappresentanti1}, we obtain  $f=\Phi\left(( f(p^j))_{j\in\mathcal{P}^U}\right)$.

The last part of the statement is an immediate consequence of the fact that $\Phi$ is injective.
\end{proof}

In order to write down the proof of Theorem \ref{main3} we need a final technical lemma.
\begin{lemma}\label{for-their} Let $R\in \mathfrak{M}^{*U}$ and $p\in \mathcal{P}$. Then, for every $x,y\in N$ and $(\varphi,\psi,\rho_0)\in \mathrm{Stab}_U(p)$, $(x,y)\in R(p)$ if and only if $(\psi(y),\psi(x))\in R(p)$.
\end{lemma}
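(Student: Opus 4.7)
The proof is essentially a direct unpacking of the $U$-symmetry hypothesis applied to elements of the stabilizer whose third coordinate is $\rho_0$. The plan is as follows.

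First, I would use the definition of the stabilizer: since $(\varphi,\psi,\rho_0)\in \mathrm{Stab}_U(p)$, by definition $p^{(\varphi,\psi,\rho_0)}=p$. Combined with the $U$-symmetry of $R$, which asserts $R(p^{(\varphi,\psi,\rho)})=\psi R(p)\rho$ for every $(\varphi,\psi,\rho)\in U$, this yields the key identity
\[
R(p)=R(p^{(\varphi,\psi,\rho_0)})=\psi R(p)\rho_0.
\]

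Second, I would unfold the right-hand side using the conventions of Section \ref{plo}. Recall that $\psi R(p)=\{(\psi(a),\psi(b))\in N^2 : (a,b)\in R(p)\}$, and that postmultiplying a relation by $\rho_0$ reverses each of its pairs, that is, $S\rho_0=\{(b,a) : (a,b)\in S\}$ for every $S\in \mathbf{R}(N)$. Applying these two rules in sequence gives
\[
\psi R(p)\rho_0=\bigl\{(\psi(b),\psi(a))\in N^2 : (a,b)\in R(p)\bigr\}.
\]

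Finally, combining the two displays above yields the equation $R(p)=\{(\psi(b),\psi(a)) : (a,b)\in R(p)\}$, from which the claimed biconditional follows at once: the forward direction ``$(x,y)\in R(p)\Rightarrow (\psi(y),\psi(x))\in R(p)$'' is immediate by taking $(a,b)=(x,y)$; the backward direction uses the injectivity of $\psi\in S_n$, since if $(\psi(y),\psi(x))\in R(p)$ can be written as $(\psi(b),\psi(a))$ with $(a,b)\in R(p)$, cancellation forces $a=x$ and $b=y$, hence $(x,y)\in R(p)$. I do not foresee any real obstacle: the argument is a purely syntactic manipulation of the $U$-symmetry identity, and the content of the lemma is simply that a stabilizer element with third coordinate $\rho_0$ imposes an antisymmetric-type self-duality of $R(p)$ under the involution $(x,y)\mapsto(\psi(y),\psi(x))$.
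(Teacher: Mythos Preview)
Your proof is correct and follows essentially the same approach as the paper: both derive the key identity $R(p)=\psi R(p)\rho_0$ from $U$-symmetry applied to a stabilizer element, then read off the biconditional by unpacking the meaning of $\psi R(p)\rho_0$. The only cosmetic difference is that the paper phrases the last step via the equivalences $x\succeq_{R(p)} y \Leftrightarrow \psi(y)\succeq_{\psi R(p)\rho_0}\psi(x)\Leftrightarrow \psi(y)\succeq_{R(p)}\psi(x)$, which handles both directions at once, whereas you treat the backward direction separately using injectivity of $\psi$.
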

\begin{proof} Let $x,y\in N$ and $(\varphi,\psi,\rho_0)\in \mathrm{Stab}_U(p)$. Then, by the $U$-symmetry of $R$,  we have $R(p)=R(p^{(\varphi,\psi,\rho_0)}=\psi R(p)\rho_0.$ On the other hand, $x\succeq _{R(p)} y$ is equivalent to $\psi(y)\succeq _{\psi R(p)\rho_0}\psi(x)$ and thus to $\psi(y)\succeq _{R(p)} \psi(x)$.
\end{proof}

\begin{proof}[Proof of Theorem \ref{main3}] $(i)\Rightarrow (ii)$. The fact that $C\in \mathfrak{P}^{*U}$ admits a $U$-symmetric resolute refinement means that $\mathfrak{F}^{*U}_C\neq \varnothing$. Let then $f\in \mathfrak{F}^{*U}_C$ and define the social method $R:\mathcal{P}\to \mathbf{R}(N)$, by $R(p)=f(p)\setminus \Delta$
for all $p\in\mathcal{P}$, where $\Delta=\{(x,x): x\in N\}.$
Note that $\Delta$ is a  relation on $N$ and that
for every $\psi\in S_n$ and $\rho\in \Omega$, we have $\psi \Delta\rho=\Delta$, accordingly to the definitions given in Section \ref{plo}. We show that $R$ is irreflexive, acyclic, $U$-symmetric and that $C^R$ refines $C.$

Let $p\in \mathcal{P}.$ $R(p)$ is irreflexive by definition and surely acyclic since it refines the linear order $f(p)$.   Let $(\varphi, \psi, \rho)\in U$. Then, by the $U$-symmetry of $f$, we have
\[
R(p^{(\varphi, \psi, \rho)})=f(p^{(\varphi, \psi, \rho)})\setminus\Delta=(\psi f(p) \rho) \setminus(\psi \Delta\rho)=\psi(f(p)\setminus \Delta)\rho=\psi R(p)\rho.
\]
Thus $R\in\mathfrak{M}^{*U}$. Finally, observe that
\[
C^R(p)=\{q\in\mathbf{L}(N): R(p)\subseteq q\}=\{q\in\mathbf{L}(N): f(p)\setminus \Delta\subseteq q\}=\{f(p)\}\subseteq C(p).
\]
Thus $C^R$ refines $C.$

$(ii)\Rightarrow (iii)$ Let  $R$ be an irreflexive, acyclic, $U$-symmetric social method  such that $C^R$ refines $C$. Then, by Lemma \ref{for-their}, condition $(a)$  in (iii) is fulfilled.

$(iii)\Rightarrow (i)$ The proof is a remake of Bubboloni and Gori (2015, Section 8), which is essentially obtained replacing the minimal majority relation $R^{\nu(p)}(p)$ there, with the present relation $R(p)$. The interested reader can find the proof in Appendix \ref{final}.
\end{proof}

\subsection{Analysis of $\mathfrak{F}_C^U$ and $\mathfrak{F}_{k,C}^U$ when $U\leq S_h\times S_n\times \{id\}$}

\begin{proposition}\label{f-fu}
Let $U\le S_h\times S_n\times \{id\}$ be regular, $(p^j)_{j\in\mathcal{P}^U}\in\mathfrak{S}(U)$ and $C\in\mathfrak{P}^U$ $(C\in\mathfrak{C}^U_k)$. For every $j\in\mathcal{P}^U$, let $x_j\in C(p^j)$.
Then there exists a unique $f\in\mathfrak{F}^U_C$ ($f\in\mathfrak{F}^U_{k,C}$) such that, for every $j\in\mathcal{P}^U$, $f(p^j)=x_j$.
\end{proposition}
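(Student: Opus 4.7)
The plan is to separate uniqueness from existence. Uniqueness is immediate from Proposition \ref{rappresentanti2}: if $f_1, f_2 \in \mathfrak{F}_C^U$ (respectively $\mathfrak{F}_{k,C}^U$) both satisfy $f_i(p^j) = x_j$ on the given system of representatives, then since $\mathfrak{F}_C^U \subseteq \mathfrak{C}^U$ (respectively $\mathfrak{F}_{k,C}^U \subseteq \mathfrak{C}_k^U$), Proposition \ref{rappresentanti2} forces $f_1 = f_2$.

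For existence, I would define $f$ by choosing, for each $p \in \mathcal{P}$, a representation $p = p^{j\,(\varphi,\psi,id)}$ with $j \in \mathcal{P}^U$ and $(\varphi,\psi,id) \in U$, and setting $f(p) = \psi x_j$. The main obstacle is well-definedness. Suppose $p = p^{j_1\,(\varphi_1,\psi_1,id)} = p^{j_2\,(\varphi_2,\psi_2,id)}$. The two representations must use the same $U$-orbit, so $j_1 = j_2 =: j$, and by \eqref{action-e} we have $(\varphi_2^{-1}\varphi_1,\, \psi_2^{-1}\psi_1,\, id) \in \mathrm{Stab}_U(p^j)$. Here regularity of $U$ is essential: condition \eqref{reg-def} confines $\mathrm{Stab}_U(p^j)$ to $(S_h\times\{id\}\times\{id\})\cup(S_h\times\{\psi_*\}\times\{\rho_0\})$ for some $\psi_*\in S_n$ conjugate to $\rho_0$. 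Since the third coordinate of the triple above is $id$ and $id\neq\rho_0$, it must lie in $S_h\times\{id\}\times\{id\}$, forcing $\psi_2^{-1}\psi_1 = id$ and therefore $\psi_1 x_j = \psi_2 x_j$. Thus $f$ is well-defined.

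It then remains to verify three properties, each essentially routine. Resoluteness is built into the definition. For refinement, note that $x_j \in C(p^j)$ and, by the $U$-consistency \eqref{sccU1} of $C$, $C(p) = C(p^{j\,(\varphi,\psi,id)}) = \psi C(p^j)$, so $f(p) = \psi x_j \in \psi C(p^j) = C(p)$. For $U$-consistency of $f$, given $(\varphi',\psi',id) \in U$ and $p = p^{j\,(\varphi,\psi,id)}$, equation \eqref{action-e} yields $p^{(\varphi',\psi',id)} = p^{j\,(\varphi'\varphi,\,\psi'\psi,\,id)}$, hence $f(p^{(\varphi',\psi',id)}) = \psi'\psi\, x_j = \psi' f(p)$, which is exactly \eqref{sccU1}; condition \eqref{sccU2} is vacuous because the third component of every element of $U$ equals $id$. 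The argument is insensitive to whether $x_j$ ranges over $\mathbf{L}(N)$ or over $\mathbb{P}_k(N)$, so both the {\sc spc} and {\sc $k$-scc} cases are covered simultaneously.
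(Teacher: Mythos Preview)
Your proof is correct and follows essentially the same approach as the paper: define $f(p)=\psi x_j$ for a representation $p=p^{j\,(\varphi,\psi,id)}$, use regularity to secure well-definedness, check refinement and $U$-consistency directly, and invoke Proposition~\ref{rappresentanti2} for uniqueness. The only cosmetic difference is that the paper treats uniqueness last rather than first.
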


The proof of the above result is similar to the one of Proposition 17 in Bubboloni and Gori (2016b) and can be find in Appendix \ref{final}.

Let $U\le S_h\times S_n\times \{id\}$ be regular, $(p^j)_{j\in \mathcal{P}^U}\in\mathfrak{S}(U)$ and $C\in\mathfrak{P}^U$ ($C\in\mathfrak{C}_k^U$). Consider the function
\[
\Phi: \times_{j\in\mathcal{P}^U} C(p^j)	\to \mathfrak{F}^{U}_C,\quad
\left(
\Phi: \times_{j\in\mathcal{P}^U} C(p^j)	\to \mathfrak{F}^{U}_{k,C}
\right)
\]
which associates with every
$(x_j)_{j\in \mathcal{P}^U}\in \times_{j\in\mathcal{P}^U} C(p^j)$ the unique $f\in\mathfrak{F}^U_{C}$ ($f\in\mathfrak{F}^U_{k,C}$) defined in Proposition \ref{f-fu}.
Of course, $\Phi$ depends on $U$, $(p^j)_{j\in \mathcal{P}^U}$  and $C$ but we do not emphasize that dependence in the notation.  Note that we  used the same letter $\Phi$ to treat both {\sc spf}s and {\sc $k$-scf}s. Note also that $\Phi$ is injective.

\begin{theorem}\label{fu-min-2}Let $U\le S_h\times S_n\times \{id\}$ be regular, $(p^j)_{j\in\mathcal{P}^U}\in\mathfrak{S}(U)$ and
$C\in\mathfrak{P}^U$ $(C\in\mathfrak{C}_k^U)$. Then
\[
\mathfrak{F}^U_{C}=\Phi\left(\times_{j\in\mathcal{P}^U}C(p^j)	\right)\quad
\Big(
\mathfrak{F}^U_{k,C}=\Phi\left(\times_{j\in\mathcal{P}^U}C(p^j)	\right)
\Big)
\]
and
\[
|\mathfrak{F}^U_{C}|= \prod_{j\in\mathcal{P}^U}\left|C(p^j)\right|\quad
\Big(
|\mathfrak{F}^U_{k,C}|= \prod_{j\in\mathcal{P}^U}\left|C(p^j)\right|
\Big).
\]
In particular, if $C$ is decisive, then
$\mathfrak{F}^U_{C}\neq\varnothing$ $(\mathfrak{F}^U_{k,C}\neq\varnothing)$.
\end{theorem}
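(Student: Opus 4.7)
The plan is to prove the set equality by a double inclusion, deduce the cardinality formula from the injectivity of $\Phi$, and derive the final non-emptiness assertion from decisiveness. All the substantive work has already been done in Propositions \ref{f-fu} and \ref{rappresentanti2}, so the proof is essentially a packaging argument; I do not expect any real obstacle.

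For the inclusion $\Phi\bigl(\times_{j\in\mathcal{P}^U}C(p^j)\bigr)\subseteq \mathfrak{F}^U_{C}$ (resp.\ $\subseteq \mathfrak{F}^U_{k,C}$), I would simply invoke Proposition \ref{f-fu}: by construction, $\Phi$ takes each tuple $(x_j)_{j\in\mathcal{P}^U}\in \times_{j\in\mathcal{P}^U}C(p^j)$ to an element of $\mathfrak{F}^U_{C}$ (resp.\ $\mathfrak{F}^U_{k,C}$). For the reverse inclusion, I would start from an arbitrary $f\in \mathfrak{F}^U_{C}$ (resp.\ $\mathfrak{F}^U_{k,C}$). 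Since $f$ refines $C$, we have $f(p^j)\in C(p^j)$ for every $j\in \mathcal{P}^U$, so the tuple $(f(p^j))_{j\in\mathcal{P}^U}$ belongs to $\times_{j\in\mathcal{P}^U}C(p^j)$. Setting $g=\Phi\bigl((f(p^j))_{j\in\mathcal{P}^U}\bigr)$, we obtain a $U$-consistent resolute function with $g(p^j)=f(p^j)$ for all $j\in\mathcal{P}^U$. As $U\le S_h\times S_n\times\{id\}$, Proposition \ref{rappresentanti2} applies and forces $f=g$, whence $f\in \Phi\bigl(\times_{j\in\mathcal{P}^U}C(p^j)\bigr)$.

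The cardinality formula is then immediate: by the uniqueness clause of Proposition \ref{f-fu}, the map $\Phi$ is injective, so
\[
|\mathfrak{F}^U_{C}|=\left|\times_{j\in\mathcal{P}^U}C(p^j)\right|=\prod_{j\in\mathcal{P}^U}|C(p^j)|,
\]
and analogously for $\mathfrak{F}^U_{k,C}$. Finally, if $C$ is decisive then $C(p^j)\neq\varnothing$ for every $j\in\mathcal{P}^U$, so the Cartesian product $\times_{j\in\mathcal{P}^U}C(p^j)$ is non-empty, and the set equality just established yields $\mathfrak{F}^U_{C}\neq\varnothing$ (resp.\ $\mathfrak{F}^U_{k,C}\neq\varnothing$). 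This completes the argument.
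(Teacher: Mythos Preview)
Your proof is correct and follows essentially the same approach as the paper: both arguments obtain one inclusion directly from the definition of $\Phi$ via Proposition \ref{f-fu}, establish the reverse inclusion by showing that any $f\in\mathfrak{F}^U_C$ satisfies $\Phi\bigl((f(p^j))_{j\in\mathcal{P}^U}\bigr)=f$ (the paper invokes this directly, you spell it out via Proposition \ref{rappresentanti2}, which is exactly how the uniqueness clause of Proposition \ref{f-fu} was proved), and then read off the cardinality and non-emptiness conclusions.
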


The proof of the above result is similar to the one of Theorem 18 in Bubboloni and Gori (2016b) and can be found in Appendix \ref{final}.

\subsection{Analysis of $\mathfrak{F}_C^U$ and $\mathfrak{F}_{k,C}^U$ when $U \nleq S_h\times S_n\times \{id\}$}

Let $U\le G$ be regular such that $U\nleq S_h\times S_n\times \{id\}$, $C\in\mathfrak{P}^U (C\in\mathfrak{C}_k^U)$.
Recall that if $p\in \mathcal{P}$ and  $y\in C(p)$, then $y\in \mathbf{L}(N)$ ($y\in\mathbb{P}_k(N)$). Moreover, given $\psi\in S_n$, the meaning of the writing
$\psi y$  is carefully explained  in Section \ref{plo}.

Fix $(p^j)_{j\in\mathcal{P}^U}\in\mathfrak{S}(U)$ and $(\varphi_*,\psi_*,\rho_0)\in U$.
Define, for every $j\in \mathcal{P}^U_1$, the set
\[
A^1_C(p^j)=\{(y,z)\in C(p^j) \times  C(p^{j\,(\varphi_*,\psi_*,\rho_0)}): z\neq \psi_*y\},
\]
and, for every $j\in \mathcal{P}^U_2$, the set
\[
A^2_C(p^j)=\left\{x\in C(p^j): \psi_jx\neq x\right\},
\]
where $\psi_j$ is the unique element in $S_n$ such that
\begin{equation}\label{psi-j}
\mathrm{Stab}_U(p^j)\subseteq (S_h\times \{id\}\times \{id\})\cup (S_h\times \{\psi_j\}\times \{\rho_0\}).
\end{equation}
Note that the uniqueness of $\psi_j$ is guaranteed by Lemma 16 (ii) in Bubboloni and Gori (2016).

Next if $\mathcal{P}_1^U\neq \varnothing$, then define
\[
A^1_C=\times_{j\in \mathcal{P}_1^U}A^1_C(p^j),
\]
and if $\mathcal{P}_2^U\neq \varnothing$, then define
\[
A^2_C=\times_{j\in\mathcal{P}_2^U}A^2_C(p^j).
\]
Recall that $\mathcal{P}_1^U$ and $ \mathcal{P}_2^U$ cannot be both empty. Thus at least one of the above set is always defined.
Of course, $A^1_C$ and $A^2_C$ depend also on $U$, $(p^j)_{j\in\mathcal{P}^U}$ and $(\varphi_*,\psi_*,\rho_0)$ but we do not emphasize that dependence in the notation.

The proof of the next result is similar to Proposition 19 in Bubboloni and Gori (2016b) and can be found in Appendix \ref{final}.

\begin{proposition}\label{fu-min-ex}
Let $U\le G$ be regular such that $U\not\le S_h\times S_n\times \{id\}$, $(p^j)_{j\in\mathcal{P}^U}\in\mathfrak{S}(U)$, $(\varphi_*,\psi_*,\rho_0)\in U$ and $C\in\mathfrak{P}^U$ ($C\in\mathfrak{C}^U_k$).
For every $j\in \mathcal{P}_1^U$, let
$(y_j,z_j)\in A^1_C(p^j)$
and, for every $j\in\mathcal{P}_2^U$, let
$x_j\in A^2_C(p^j)$.
Then there exists
a unique $f\in\mathfrak{F}^U_C$ ($f\in\mathfrak{F}^U_{k,C}$) such that
$f(p^j)=y_j$ and $f(p^{j\,(\varphi_*,\psi_*,\rho_0)})=z_j$ for all $j\in\mathcal{P}_1^U$, and
$f(p^j)=x_j$ for all  $j\in\mathcal{P}_2^U$.
\end{proposition}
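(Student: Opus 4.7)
The plan is to construct $f$ explicitly on each $U$-orbit from the prescribed values, then verify well-definedness, refinement of $C$, and $U$-consistency; uniqueness will then follow from Proposition \ref{rappresentanti3} applied to the extended family of representatives $(p^j)_{j\in\mathcal{P}^U}$ together with the additional profiles $p^{j\,(\varphi_*,\psi_*,\rho_0)}$ for $j\in\mathcal{P}_1^U$.

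For the construction, any $p\in\mathcal{P}$ lies in a unique orbit $j\in\mathcal{P}^U$ and can be written as $p=p^{j\,(\varphi,\psi,\rho)}$ with $(\varphi,\psi,\rho)\in U$. I set
\[
f(p)=\begin{cases} \psi y_j & \text{if } j\in\mathcal{P}_1^U,\ \rho=id,\\
\psi\psi_*^{-1} z_j & \text{if } j\in\mathcal{P}_1^U,\ \rho=\rho_0,\\
\psi x_j & \text{if } j\in\mathcal{P}_2^U,\ \rho=id,\\
\psi\psi_j^{-1} x_j & \text{if } j\in\mathcal{P}_2^U,\ \rho=\rho_0.
\end{cases}
\]
The case $j\in\mathcal{P}_1^U,\ \rho=\rho_0$ is motivated by $(\varphi,\psi,\rho_0)=(\varphi\varphi_*^{-1},\psi\psi_*^{-1},id)(\varphi_*,\psi_*,\rho_0)$, which via \eqref{action-e} gives $p=(p^{j\,(\varphi_*,\psi_*,\rho_0)})^{(\varphi\varphi_*^{-1},\psi\psi_*^{-1},id)}$; the case $j\in\mathcal{P}_2^U,\ \rho=\rho_0$ is motivated similarly, using an element $(\varphi',\psi_j,\rho_0)\in\mathrm{Stab}_U(p^j)$ together with \eqref{psi-j} to rewrite $p$ as $p^{j\,(\varphi\varphi'^{-1},\psi\psi_j^{-1},id)}$.

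Well-definedness will be the main technical point. If $p^{j\,(\varphi_1,\psi_1,\rho_1)}=p^{j\,(\varphi_2,\psi_2,\rho_2)}$, then $(\varphi_2^{-1}\varphi_1,\psi_2^{-1}\psi_1,\rho_2^{-1}\rho_1)\in\mathrm{Stab}_U(p^j)$. By regularity of $U$, whenever the third component of a stabilizer element is $id$ its second component must be $id$, so from $\rho_1=\rho_2=id$ I obtain $\psi_1=\psi_2$ directly; from $\rho_1=\rho_2=\rho_0$ I obtain $\psi_2^{-1}\psi_1=id$ as well (since the product has trivial third component), so again $\psi_1=\psi_2$. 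For $j\in\mathcal{P}_2^U$, the two presentations of a single $p$ differing in whether $\rho=id$ or $\rho=\rho_0$ are reconciled by the identity $p^{j\,(\varphi,\psi,\rho_0)}=p^{j\,(\varphi\varphi'^{-1},\psi\psi_j^{-1},id)}$ above, which indeed produces the same value $\psi\psi_j^{-1}x_j$ in both clauses. The case $j\in\mathcal{P}_1^U$ does not mix the two clauses because $\mathrm{Stab}_U(p^j)\le S_h\times S_n\times\{id\}$ keeps the two subsets of the orbit disjoint.

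That $f$ refines $C$ follows directly from the $U$-consistency of $C$ together with $y_j,x_j\in C(p^j)$ and $z_j\in C(p^{j\,(\varphi_*,\psi_*,\rho_0)})$: in each of the four clauses, the expression defining $f(p)$ is exactly $\psi_0\cdot(\text{seed value})$ for an appropriate $\psi_0$ such that $C(p)=\psi_0\cdot C(\text{seed profile})$ by \eqref{sccU1}. For $U$-consistency of $f$, given any $(\varphi,\psi,\rho)\in U$ and $p=p^{j\,(\varphi_0,\psi_0,\rho_0')}$, I compute $p^{(\varphi,\psi,\rho)}=p^{j\,(\varphi\varphi_0,\psi\psi_0,\rho\rho_0')}$ via \eqref{action-e} and check clause by clause that $f(p^{(\varphi,\psi,id)})=\psi f(p)$ (verifying \eqref{sccU1}) and that $f(p^{(\varphi,\psi,\rho_0)})\ne\psi f(p)$ (verifying \eqref{sccU2}). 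The hard part is the last verification: in the four sub-cases it reduces, after cancellation, exactly to $z_j\ne\psi_* y_j$ (for $j\in\mathcal{P}_1^U$) or $\psi_j x_j\ne x_j$ (for $j\in\mathcal{P}_2^U$), which are precisely the defining conditions of $A_C^1(p^j)$ and $A_C^2(p^j)$. This is where the hypotheses are used in an essential way, and it is the step I expect to demand the most care.
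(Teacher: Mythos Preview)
Your proposal is correct and follows essentially the same strategy as the paper's proof: define $f$ explicitly on each orbit from the seed values, check well-definedness via the stabilizer structure afforded by regularity, verify $U$-consistency (with \eqref{sccU2} reducing precisely to the defining conditions of $A_C^1$ and $A_C^2$), check refinement via \eqref{sccU1}, and conclude uniqueness from Proposition~\ref{rappresentanti3}. The only cosmetic difference is that for $j\in\mathcal{P}_2^U$ the paper introduces an auxiliary $\sigma_j$ with $\psi_j=\sigma_j\rho_0\sigma_j^{-1}$ and writes a single formula $f(p)=\psi\sigma_j\rho\sigma_j^{-1}x_j$, whereas you split into the two subcases $\rho=id$ and $\rho=\rho_0$; since $\psi_j$ has order $2$ (being conjugate to $\rho_0$), your $\psi\psi_j^{-1}x_j$ coincides with the paper's $\psi\sigma_j\rho_0\sigma_j^{-1}x_j=\psi\psi_j x_j$, so the two constructions are identical.
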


Let $U\le G$ be regular such that $U\not\le S_h\times S_n\times \{id\}$, $(p^j)_{j\in\mathcal{P}^U}\in\mathfrak{S}(U)$, $(\varphi_*,\psi_*,\rho_0)\in U$ and $C\in\mathfrak{P}^U$ ($C\in\mathfrak{C}^U_k$).
\begin{itemize}
\item[-] If $\mathcal{P}_2^U=\varnothing$, then let $\Psi_1: A^1_C\to \mathfrak{F}^U_C$ ($\Psi_1: A^1_C\to \mathfrak{F}^U_{k,C}$) be the function which associates with every  $(y_j,z_j)_{j\in\mathcal{P}_1^U}\in A^1_C$, the unique $f\in\mathfrak{F}^U_C$ ($f\in\mathfrak{F}^U_{k,C}$) defined in Proposition \ref{fu-min-ex}.
	\item[-] If $\mathcal{P}_1^U=\varnothing$, then let
$\Psi_2: A^2_C\to \mathfrak{F}^U_C$ ($\Psi_2: A^2_C\to \mathfrak{F}^U_{k,C}$) be the function which associates with every  $(x_j)_{j\in \mathcal{P}_2^U}\in A^2_C$, the unique
$f\in\mathfrak{F}^U_C$ ($f\in\mathfrak{F}^U_{k,C}$) defined in Proposition \ref{fu-min-ex}.
\item[-] If $\mathcal{P}_1^U\neq \varnothing$ and $\mathcal{P}_2^U\neq \varnothing$, then let
$\Psi_3:A^1_C\times A^2_C \to \mathfrak{F}^U_C$ ($\Psi_3:A^1_C\times A^2_C \to \mathfrak{F}^U_{k,C}$) be the function
which associates with every  $((y_j,z_j)_{j\in\mathcal{P}_1^U},(x_j)_{j\in \mathcal{P}_2^U})\in A^1_C\times A^2_C$, the unique
$f\in\mathfrak{F}^U_C$ ($f\in\mathfrak{F}^U_{k,C}$) defined in Proposition \ref{fu-min-ex}.
\end{itemize}
Of course, $\Psi_1$, $\Psi_2$ and $\Psi_3$ depend on $U$, $(p^j)_{j\in\mathcal{P}^U}$, $(\varphi_*,\psi_*,\rho_0)$ and $C$ but we do not emphasize that dependence in the notation. Note also that $\Psi_1$, $\Psi_2$ and $\Psi_3$  are injective.

\begin{theorem}\label{fu-min-count2}
Let $U\le G$ be regular such that $U\not\le S_h\times S_n\times \{id\}$, $(p^j)_{j\in\mathcal{P}^U}\in\mathfrak{S}(U)$, $(\varphi_*,\psi_*,\rho_0)\in U$ and $C\in\mathfrak{P}^U$ $(C\in \mathfrak{C}_k^U)$.
Then
\[
\mathfrak{F}^U_C\; \big(\mathfrak{F}^U_{k,C}\big) =
\left\{
\begin{array}{ll}
\Psi_1(A^1_C) & \mbox{if }\mathcal{P}_2^U=\varnothing\\
\Psi_2(A^2_C) & \mbox{if }\mathcal{P}_1^U=\varnothing\\
\Psi_3(A^1_C\times A^2_C)& \mbox{if }\mathcal{P}_1^U\neq\varnothing\mbox{ and }\mathcal{P}_2^U\neq\varnothing\\
\end{array}
\right.
\]
and
\[
|\mathfrak{F}^U_C|\; \big( |\mathfrak{F}^U_{k,C}| \big)=
\left\{
\begin{array}{ll}
|A^1_C| & \mbox{if }\mathcal{P}_2^U=\varnothing\\
|A^2_C| & \mbox{if }\mathcal{P}_1^U=\varnothing\\
|A^1_C|\cdot |A^2_C|& \mbox{if }\mathcal{P}_1^U\neq\varnothing\mbox{ and }\mathcal{P}_2^U\neq\varnothing\\
\end{array}
\right.
\]
Moreover,  if $C$ is decisive, then we have that:
\begin{itemize}
\item for every $j\in \mathcal{P}^U_1$, $A^1_C(p^j)\neq \varnothing$,
\item $\mathfrak{F}^U_C\neq \varnothing$ $(\mathfrak{F}^U_{k,C}\neq\varnothing)$ if and only if, for every $j\in \mathcal{P}^U_2$, $A^2_C(p^j)\neq \varnothing$.
\end{itemize}
\end{theorem}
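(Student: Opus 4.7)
The plan is to prove both inclusions $\mathfrak{F}^U_C = \Psi_i(\cdot)$, deduce the cardinalities from the injectivity of the $\Psi_i$, and then handle the nonemptiness claims. One inclusion is immediate: Proposition \ref{fu-min-ex} already says that any choice of data in $A^1_C$ and/or $A^2_C$ is hit by $\Psi_i$, so $\Psi_i(\cdot)\subseteq \mathfrak{F}^U_C$ in each of the three cases. The real work is the reverse inclusion.

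For this, starting from $f\in \mathfrak{F}^U_C$, I would extract canonical data from $f$ and then invoke the uniqueness statement of Proposition \ref{rappresentanti3}. For each $j\in\mathcal{P}_1^U$, set $y_j=f(p^j)$ and $z_j=f(p^{j\,(\varphi_*,\psi_*,\rho_0)})$; since $f$ refines $C$, both lie in the appropriate $C$-values. To see $z_j\neq \psi_* y_j$, apply \eqref{sccU2} to $f$ at $p^j$ with $(\varphi_*,\psi_*,\rho_0)$: as $|f(p^j)|=1$, we get $f(p^{j\,(\varphi_*,\psi_*,\rho_0)})\neq \psi_* f(p^j)$, i.e.\ $\{z_j\}\neq\{\psi_* y_j\}$, so $(y_j,z_j)\in A^1_C(p^j)$. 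For each $j\in\mathcal{P}_2^U$, set $x_j=f(p^j)\in C(p^j)$; picking $(\varphi,\psi_j,\rho_0)\in \mathrm{Stab}_U(p^j)$ as in \eqref{psi-j} and applying \eqref{sccU2} to $f$ gives $f(p^j)=f(p^{j\,(\varphi,\psi_j,\rho_0)})\neq \psi_j f(p^j)$, i.e.\ $x_j\neq \psi_j x_j$, hence $x_j\in A^2_C(p^j)$. Feeding this data into the corresponding $\Psi_i$ yields some $g\in \mathfrak{F}^U_C$ agreeing with $f$ on every $p^j$ and on every $p^{j\,(\varphi_*,\psi_*,\rho_0)}$ for $j\in\mathcal{P}_1^U$; Proposition \ref{rappresentanti3} then forces $g=f$. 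Since each $\Psi_i$ is injective, the two displayed equalities and the cardinality formulas follow simultaneously. The same argument applies verbatim in the {\sc $k$-scc} setting.

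For the two nonemptiness claims under decisiveness: to see $A^1_C(p^j)\neq \varnothing$ for $j\in\mathcal{P}_1^U$, split cases. If $|C(p^j)|\geq 2$, pick any $z\in C(p^{j\,(\varphi_*,\psi_*,\rho_0)})\neq \varnothing$ and two distinct $y_1,y_2\in C(p^j)$; since $\psi_*$ is a bijection, at most one of $\psi_* y_1,\psi_* y_2$ equals $z$, so one $y_i$ works. If $|C(p^j)|=1$, say $C(p^j)=\{y\}$, then \eqref{sccU2} applied directly to $C$ gives $C(p^{j\,(\varphi_*,\psi_*,\rho_0)})\neq\{\psi_* y\}$, providing $z$ with $z\neq\psi_* y$. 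Thus $A^1_C$ is nonempty whenever $\mathcal{P}_1^U\neq\varnothing$. The equivalence $\mathfrak{F}^U_C\neq\varnothing \iff A^2_C(p^j)\neq\varnothing$ for every $j\in\mathcal{P}_2^U$ is then read off from the description of $\mathfrak{F}^U_C$: in the case $\mathcal{P}_2^U=\varnothing$ there is nothing to check, and otherwise $\mathfrak{F}^U_C$ is in bijection with $A^1_C\times A^2_C$ (or $A^2_C$), whose first factor is always nonempty, so emptiness is driven entirely by the $A^2_C(p^j)$'s.

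I expect the main obstacle to be the uniform handling of the three cases through Proposition \ref{rappresentanti3}, together with the delicate use of \eqref{sccU2}: the very conditions $z_j\neq \psi_* y_j$ and $x_j\neq \psi_j x_j$ that define $A^1_C$ and $A^2_C$ are not arbitrary but are precisely the constraints forced on any resolute refinement by $U$-consistency, so one must check that the extraction from $f$ and the reconstruction via $\Psi_i$ are genuinely inverse operations.
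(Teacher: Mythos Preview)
Your proof is correct and follows essentially the same approach as the paper's own proof: extract from $f\in\mathfrak{F}^U_C$ the data $(y_j,z_j)$ and $x_j$, verify via \eqref{sccU2} that this data lies in $A^1_C\times A^2_C$, and conclude by the uniqueness clause of Proposition~\ref{fu-min-ex} (equivalently, Proposition~\ref{rappresentanti3}). The only cosmetic difference is in the nonemptiness of $A^1_C(p^j)$: the paper argues by contradiction (if every pair satisfied $z=\psi_* y$, decisiveness would force $|C(p^j)|=1$ and $C(p^{j\,(\varphi_*,\psi_*,\rho_0)})=\psi_*C(p^j)$, contradicting \eqref{sccU2}), whereas you split on $|C(p^j)|$; the two arguments are logically interchangeable.
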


\begin{proof}
Let $C\in\mathfrak{P}^U$.
Assume first that $\mathcal{P}_1^U$ and  $\mathcal{P}_2^U$ are both nonempty.
Consider $f\in \mathfrak{F}^U_C$ and note that
\[
\left((f(p^j),f(p^{j\,(\varphi_*,\psi_*,\rho_0)}))_{j\in\mathcal{P}_1^U},(f(p^j))_{j\in \mathcal{P}_2^U}\right)\in A^1_{C}\times A^2_{C},
\]
and
\[
\Psi_3\left((f(p^j),f(p^{j\,(\varphi_*,\psi_*,\rho_0)}))_{j\in\mathcal{P}_1^U},(f(p^j))_{j\in \mathcal{P}_2^U}\right)=f.
\]
Thus $\Psi_3$ is bijective and we have $|\mathfrak{F}^U_C|=|A^1_C\times A^2_C|=|A^1_C|\cdot| A^2_C|$.
The case $\mathcal{P}_1^U=\varnothing$ and the case $\mathcal{P}_2^U=\varnothing$ are similar and then omitted.

Assume now that $C$ is decisive. Assume, by contradiction, that there exists $j\in \mathcal{P}^U_1$ such that
$A^1_C(p^j)= \varnothing.$ Thus, for every $y\in C(p^j)$ and $z\in C(p^{j\,(\varphi_*,\psi_*,\rho_0)})$ we have $z=\psi_* y$. Using decisiveness, fix $z\in C(p^{j\,(\varphi_*,\psi_*,\rho_0)})$. If $y_1,y_2\in C(p^j)$, we then have $z=\psi_* y_1=\psi_* y_2$ so that $y_1=y_2$.
It follows that $|C(p^j)|=1=|C(p^{j\,(\varphi_*,\psi_*,\rho_0)})|$ and that $C(p^{j\,(\varphi_*,\psi_*,\rho_0)})=\psi_*C(p^j)$, against $U$-consistency.
The last part of the theorem is trivial.

Let now consider $C\in\mathfrak{C}^U_k$. The theorem can be proved using formally the same argument.
\end{proof}

\subsection{Proof of Theorem \ref{main2}}

\begin{proof}[Proof of Theorem \ref{main2}]
Let first $U\leq S_h\times S_n\times \{id\}.$ Then, by Theorem \ref{fu-min-2}, we have that $\mathfrak{F}^U_{C}\neq\varnothing$.
Let next $U\nleq S_h\times S_n\times \{id\}.$ In order to show that also in this case we have $\mathfrak{F}^U_{C}\neq\varnothing$,
by Theorem \ref{fu-min-count2},
we need to prove that, for every $j\in \mathcal{P}^U_2$, $A^2_C(p^j)\neq \varnothing$.
Assume, by contradiction, that  there exists $j\in \mathcal{P}^U_2$ such that $A^2_C(p^j)= \varnothing$ and consider $\psi_j\in S_n$ as defined in \eqref{psi-j}. Then, for every $x\in C(p^j),$ we have $\psi_jx=x.$ Recall that $\psi_j$ is  a conjugate of $\rho_0$ and thus, in particular, $\psi_j\neq id$.
Since $C$ is decisive, we can pick $x \in C(p^j)$. Thus $x\in S_n$ and, using the cancellation law in $S_n$, we get the contradiction $\psi_j=id.$
\end{proof}

\subsection{Proof of Theorem \ref{main1}}

 Theorem \ref{fu-min-count2}, says that to guarantee the existence of a $U$-consistent resolute refinement for some $C\in \mathfrak{C}_k^U\cup \mathfrak{P}^U$ we need to satisfy the condition $A^2_C(p^j)\neq \varnothing$ for every $j\in \mathcal{P}^U_2$, where $(p^j)_{j\in\mathcal{P}^U}\in\mathfrak{S}(U)$.  We have seen  in the proof of Theorem \ref{main2}, that that condition always holds if $C\in\mathfrak{P}^U.$ On the other hand,  there is no reason for having it satisfied when $C\in \mathfrak{C}_k^U$. Indeed, in that context, $A^2_C(p^j)=\varnothing$ for some $j\in \mathcal{P}^U_2$ means that for $\psi_j\in S_n$ defined by \eqref{psi-j} we have
\begin{equation}\label{crisi}
\mbox{for every } x\in C(p^j),\, \psi_jx=x.
\end{equation}
In other words, $\psi_j$ fixes all the $k$-subsets of $N$ appearing in $C(p^j)$. Thus the elements of $C(p^j)$ need to be union of $\psi_j$-orbits and this does not constitute, in principle, a contradictory fact.

The situation is then more variegated  with respect to the case of the  {\sc spc}s and, in order to manage it,  we need some preliminary work. The next lemma is an easy but very useful starting point.

\begin{lemma}\label{singleton} Let $U\le G$ be regular such that $U\not\le S_h\times S_n\times \{id\}$, $(p^j)_{j\in\mathcal{P}^U}\in\mathfrak{S}(U)$ and $C\in \mathfrak{C}_k^U$. If $j\in\mathcal{P}_2^U$ is such that $|C(p^j)|=1$, then $A^2_C(p^j)\neq \varnothing$.
\end{lemma}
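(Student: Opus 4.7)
The plan is to show directly that the unique element of $C(p^j)$ cannot be fixed by $\psi_j$, by exploiting the $U$-consistency condition \eqref{sccU2} together with the fact that $p^j$ has an element of the form $(\varphi,\psi_j,\rho_0)$ in its stabilizer.

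First I would recall the setup: since $j \in \mathcal{P}_2^U$, the stabilizer $\mathrm{Stab}_U(p^j)$ is not contained in $S_h \times S_n \times \{id\}$, so there exists $(\varphi,\psi,\rho_0) \in \mathrm{Stab}_U(p^j)$. By the regularity of $U$ and the definition of $\psi_j$ given in \eqref{psi-j}, this forces $\psi = \psi_j$, and moreover $\psi_j$ is a conjugate of $\rho_0$, hence $\psi_j \neq id$. So we have a triple $(\varphi,\psi_j,\rho_0) \in \mathrm{Stab}_U(p^j)$, i.e.\ $p^{j\,(\varphi,\psi_j,\rho_0)} = p^j$.

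Next, write $C(p^j) = \{W\}$ for some $W \in \mathbb{P}_k(N)$, using the hypothesis $|C(p^j)|=1$. Apply the $U$-consistency condition \eqref{sccU2} of $C$ to the profile $p^j$ and the group element $(\varphi,\psi_j,\rho_0)$: since $\rho=\rho_0$ and $|C(p^j)|=1$, we obtain
\[
C(p^{j\,(\varphi,\psi_j,\rho_0)}) \neq \psi_j C(p^j).
\]
The left-hand side equals $C(p^j) = \{W\}$, and the right-hand side equals $\{\psi_j W\}$. Thus $\{W\} \neq \{\psi_j W\}$, which gives $\psi_j W \neq W$. Therefore $W \in A^2_C(p^j)$, proving $A^2_C(p^j) \neq \varnothing$.

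There is no genuine obstacle here: the statement is essentially an unpacking of the definitions, the key point being that condition \eqref{sccU2} was designed precisely to forbid a singleton output from being fixed (up to $\psi_j$) when an element of the stabilizer has $\rho$-component $\rho_0$. The only care needed is in pinning down that the $\psi$-component of such a stabilizer element must be exactly the $\psi_j$ appearing in the definition of $A^2_C(p^j)$, which follows from \eqref{psi-j}.
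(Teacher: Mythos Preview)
Your proof is correct and slightly more direct than the paper's. Both arguments hinge on the same two ingredients: the existence of an element $(\varphi,\psi_j,\rho_0)\in\mathrm{Stab}_U(p^j)$ coming from $j\in\mathcal{P}_2^U$ and regularity, and the consistency condition \eqref{sccU2}. You apply \eqref{sccU2} directly to this stabilizer element, so that the left-hand side $C(p^{j\,(\varphi,\psi_j,\rho_0)})$ collapses to $C(p^j)$ and the inequality $\{W\}\neq\{\psi_j W\}$ drops out immediately. The paper instead argues by contradiction: assuming $\psi_j x_1=x_1$, it picks an auxiliary element $(\varphi_*,\psi_*,\rho_0)\in U$ (guaranteed by $U\not\le S_h\times S_n\times\{id\}$), factors $(\varphi_*,\psi_*,\rho_0)$ through the stabilizer element to rewrite $C(p^{j\,(\varphi_*,\psi_*,\rho_0)})$ via \eqref{sccU1}, and obtains $C(p^{j\,(\varphi_*,\psi_*,\rho_0)})=\psi_* C(p^j)$, contradicting \eqref{sccU2}. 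Your route avoids the detour through $(\varphi_*,\psi_*,\rho_0)$ and shows in passing that the hypothesis $U\not\le S_h\times S_n\times\{id\}$ is not needed beyond what is already implied by $\mathcal{P}_2^U\neq\varnothing$.
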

\begin{proof} Let $C(p^j)=\{x_1\}$ where $x_1$ is a $k$-subset of $N$ and assume, by contradiction, that $A^2_C(p^j)= \varnothing$. Thus, by \eqref{crisi},
we have  $\psi_j x_1=x_1$, with $\psi_j\in S_n$ defined in \eqref{psi-j}.
Since $j\in \mathcal{P}_2^U$, there exists  $(\varphi_1,\psi_1,\rho_0)\in \mathrm{Stab}_U(p^j)$ and, by the regularity of $U$, we have $\psi_1=\psi_j.$  Thus $\psi_1 x_1=x_1.$ It follows that $\psi_1^{-1}C(p^j)=C(p^j).$ On the other hand, since $U\not\le S_h\times S_n\times \{id\}$, there exists $(\varphi_*,\psi_*,\rho_0)\in U$ and,
by \eqref{action-e} and \eqref{sccU1}, we deduce that
\[
C(p^{j\,(\varphi_*,\psi_*,\rho_0)})
=C\left((p^{j\,(\varphi_1,\psi_1,\rho_0)})^{(\varphi_*\varphi_1^{-1},\psi_*\psi_1^{-1}, id)}\right)
\]
\[
=C(p^{j	\,(\varphi_*\varphi_1^{-1},\psi_*\psi_1^{-1}, id)})
=\psi_*\psi_1^{-1}C(p^j)=
\psi_*C(p^j),
\]
which contradicts \eqref{sccU2}.
\end{proof}

\begin{proof}[Proof of the implication $(ii)\Rightarrow(i)$ of Theorem \ref{main1}] Assume that one among $(a)$-$(d)$ holds and let $C\in\mathfrak{C}^U_k$ be decisive. We will prove that $\mathfrak{F}^U_{k,C}\neq\varnothing$.

If $(a)$ holds then, by \eqref{PU2vuoto}, we have
$\mathcal{P}^U_2=\varnothing$.  Hence, by Theorem \ref{fu-min-count2}, we immediately deduce $\mathfrak{F}^U_{k,C}\neq\varnothing$.
Assume then that $(a)$ does not hold but one among $(b)$-$(d)$ holds. Then, using the other implication in \eqref{PU2vuoto}, we have $\mathcal{P}^U_2\neq\varnothing$ and, by Theorem \ref{fu-min-count2}, we need to show that, for every $j\in\mathcal{P}_2^U$, $A^2_C(p^j)\neq \varnothing$.

Assume by contradiction that for some $j\in\mathcal{P}_2^U$, we have $A^2_C(p^j)= \varnothing$. Thus \eqref{crisi} holds true, for
$\psi_j\in S_n$ defined by\eqref{psi-j}. Recall now that $\psi_j$ is  a conjugate of $\rho_0$. Thus if $n$ is even all its orbits have size $2$; if $n$ is odd we have a unique orbit of size $1$ given by the only fixed point of $\psi_j$ and all the other orbits  have size 2.

Assume first that $k=1$. Then, for every $x\in C(p^j)$, $x$ is a singleton and the unique element in $x$ is a fixed point for  $\psi_j.$ Since, for $n$ even,  $\psi_j$ has no fixed point we deduce that $n$ is odd and $C(p^j)=\{\{x_1\}\}$ where $x_1$ is the only fixed point of $\psi_j$. Thus, by Lemma \ref{singleton}, we get the contradiction $A^2_C(p^j)\neq \varnothing.$

Assume now that $n\leq 3$. Because of the previous step we need to consider only the case $n=3$ and $k=2$. Thus, every $x\in C(p^j)$ is a $2$-subset of $N$ fixed by $\psi_j$. But $T_{\psi_j}=[1,2]$, that is, $\psi_j=(a\, b)$ for some distinct $a,b\in N$. Thus the only possibility is $x=\{a,b\}$ and hence $C(p^j)=\{\{a,b\}\}$ is a singleton. Again Lemma \ref{singleton} gives the internal contradiction $A^2_C(p^j)\neq \varnothing.$

Assume next that $n$ is even and $k$ is odd. Every $x\in C(p^j)$ is a $k$-subset of $N$  fixed by $\psi_j$. But since every orbit of $\psi_j$ has size $2$, any subset of $N$ fixed by $\psi_j$ has even size. Thus $C(p^j)=\varnothing$ against decisiveness.

Finally assume that $k=n-1$. If $n$ is even, then $k$ is odd and we conclude by the previous step. If $n$ is odd,  we have that every $x\in C(p^j)$  is a $(n-1)$-subset of $N$ fixed by $\psi_j.$ But the unique subset of $N$ of size $n-1$ fixed by $\psi_j$ is the union of the orbits of $\psi_j$ of size $2$, that is  $N\setminus \{x_1\}$, where $x_1$  is the unique fixed point of $\psi_j$. Thus, $|C(p^j)|=1$ and Lemma \ref{singleton} gives the contradiction $A^2_C(p^j)\neq \varnothing.$
\end{proof}

We are now left with proving the implication $(i)\Rightarrow (ii)$ of Theorem \ref{main1}. To that purpose, we need to introduce and study a special family of
{\sc $k$-scc}s.

Let $U\le G$ be regular. For every $p\in \mathcal{P}$, denote by $\psi_p$ the unique permutation in $S_n$ such that
\[
\mathrm{Stab}_U(p)\subseteq \left(S_h\times \{id\}\times \{id\}\right)\cup \left( S_h\times \{\psi_p\}\times \{\rho_0\}\right).
\]
The {\sc $k$-scc} $U_k$ associated with $U$ is defined, for every $p\in \mathcal{P}$, by
\[
U_k(p)=
\left\{
\begin{array}{lll}
 \mathbb{P}_k(N)\ & \mbox{if }\  \mathrm{Stab}_U(p)\leq S_h\times  \{id\} \times \{id\}\\
&\\
\{x\in  \mathbb{P}_k(N):  \psi_p\, x=x\}\ &  \mbox{if }\  \mathrm{Stab}_U(p)\nleq S_h\times \{id\}\times \{id\}\\
\end{array}
\right.
\]
The next two propositions show some important properties of $U_k$.

\begin{proposition}\label{decisiva} Let $U\le G$ be regular. The following facts are equivalent:
\begin{itemize}
\item[$(i)$] $U_k$ is decisive.
\item[$(ii)$] One of the following condition is satisfied:
\begin{itemize}
\item[$(a)$] $\mathcal{P}_2^U=\varnothing;$
\item[$(b)$] $n$ is odd;
\item[$(c)$]$n$ is even with $n\geq 4$ and $k$ is even.
\end{itemize}
\end{itemize}
\end{proposition}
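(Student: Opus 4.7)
The plan is to reduce the question to the behavior of $U_k$ on profiles whose stabilizer is not contained in $S_h\times\{id\}\times\{id\}$, and then to analyze the orbit structure of the associated permutation $\psi_p$. First I observe that if $\mathrm{Stab}_U(p)\le S_h\times\{id\}\times\{id\}$, then by definition $U_k(p)=\mathbb{P}_k(N)$, which is nonempty because $1\le k\le n-1$. Hence decisiveness of $U_k$ can fail only on profiles $p$ satisfying $\mathrm{Stab}_U(p)\nleq S_h\times\{id\}\times\{id\}$, i.e.\ on profiles whose $U$-orbit lies in $\mathcal{P}_2^U$. This immediately takes care of condition (a): if $\mathcal{P}_2^U=\varnothing$, then $U_k(p)\neq\varnothing$ for every $p$.

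Next I would exploit the regularity of $U$: for any $p$ with $\mathrm{Stab}_U(p)\nleq S_h\times\{id\}\times\{id\}$ the permutation $\psi_p$ is a conjugate of $\rho_0$, hence $T(\psi_p)=T(\rho_0)$. Therefore $\psi_p$ is a product of $n/2$ disjoint transpositions when $n$ is even, whereas when $n$ is odd it has exactly one fixed point and $(n-1)/2$ disjoint transpositions. A $k$-subset of $N$ is fixed setwise by $\psi_p$ if and only if it is a union of $\psi_p$-orbits, so the achievable sizes of $\psi_p$-invariant subsets of $N$ are precisely $\{0,2,4,\ldots,n\}$ when $n$ is even and $\{0,1,2,\ldots,n\}$ when $n$ is odd.

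For the implication $(ii)\Rightarrow(i)$ it then suffices to note that under (b) every $k\in\{1,\ldots,n-1\}$ is an admissible size, while under (c), $k$ being even together with $k\le n-1$ (where $n-1$ is odd) forces $k\in\{2,4,\ldots,n-2\}$, which is again admissible and nonempty because $n\ge 4$. In either case $U_k(p)\neq\varnothing$ for every $p$ in a $\mathcal{P}_2^U$-orbit, and combining with the previous paragraph gives decisiveness.

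For the converse $(i)\Rightarrow(ii)$ I would argue by contrapositive: assume that none of (a), (b), (c) holds. Then $\mathcal{P}_2^U\neq\varnothing$, $n$ is even, and either $n<4$ or $k$ is odd; but if $n=2$ then $k=1$ is forced, so $k$ is odd in every case. Pick $j\in\mathcal{P}_2^U$ and a representative $p^j\in j$. By the above analysis $\psi_{p^j}$ has only $2$-orbits, so every $\psi_{p^j}$-invariant subset of $N$ has even size; since $k$ is odd, $U_k(p^j)=\varnothing$, and $U_k$ fails to be decisive. The main (and only) conceptual step is the identification of the orbit structure of $\psi_p$ via regularity; once this is in place, everything reduces to an elementary parity count, so I do not expect any real obstacle.
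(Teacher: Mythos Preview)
Your proof is correct and follows essentially the same approach as the paper: reduce decisiveness to the existence of $\psi_p$-invariant $k$-subsets for profiles whose orbit lies in $\mathcal{P}_2^U$, exploit regularity to identify the cycle type of $\psi_p$ with that of $\rho_0$, and then do the parity count on unions of $\psi_p$-orbits. Your explicit determination of the full set of achievable sizes of $\psi_p$-invariant subsets makes the two directions slightly more uniform than in the paper, but the argument is the same.
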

\begin{proof}$(i)\Rightarrow (ii)$ Let $U_k$ be decisive, $\mathcal{P}_2^U\neq\varnothing$ and $n$ even. We show that $n\geq 4$ and that $k$ is even. Since $\mathcal{P}_2^U\neq\varnothing$, there exists $p\in \mathcal{P}$ such that $\mathrm{Stab}_U(p)\nleq S_h\times \{id\}\times \{id\}$ and thus there exists $(\varphi, \psi_p,\rho_0)\in \mathrm{Stab}_U(p).$ By
$U_k(p)\neq\varnothing,$ we deduce that there exists at least one $k$-subset $x$ of $N$ fixed by $\psi_p$. Since $1\leq k\leq n-1$, $x$ is a proper nonempty subset of $N$ which is union of $\psi_p$-orbits. Since $n$ is even, we have $n/2$ orbits all of size $2$. If $n=2$, then we have just one orbit and thus the only subsets of $N$ which are union of orbits are $\varnothing$ and $N$. It follows that $n\geq 4$ and that $k=|x|$ is even.

$(ii)\Rightarrow (i)$ Let $\mathcal{Q}=\{p\in\mathcal{P}:\mathrm{Stab}_U(p)\nleq S_h\times \{id\}\times \{id\}\}$. Since $\mathbb{P}_k(N)\neq \varnothing$, in order to show that $U_k$ is decisive, it is enough to see that for every $p\in \mathcal{Q}$, we have $\{x\in  \mathbb{P}_k(N):  \psi_p\, x=x\}\neq \varnothing.$
 If $\mathcal{P}_2^U=\varnothing,$ this is clear because necessarily we also have $\mathcal{Q}=\varnothing$. Let $n$ be odd and pick $p\in \mathcal{Q}$. Then $\psi_p$ has $\frac{n-1}{2}\geq 1$ orbits of size $2$ and one orbit of size $1$.
Assembling some of those orbits we can surely build a $k$-subset of $N$ fixed by $\psi_p$, whatever $k$ is. Thus $\{x\in  \mathbb{P}_k(N):  \psi_p\, x=x\}\neq \varnothing$.
Let finally $n$ be even with $n\geq 4$ and $k$ be even. Then we have $2\leq k\leq n-2$. Pick $p\in \mathcal{Q}$. Then $\psi_p$ has $\frac{n}{2}\geq 2$ orbits of size $2$.  Assembling some of those orbits we can surely build a $k$-subset of $N$ fixed by $\psi_p$. Thus, $\{x\in  \mathbb{P}_k(N):  \psi_p\, x=x\}\neq \varnothing.$
\end{proof}
In order to state the next result, let us first define the set
\begin{equation}\label{T}
T=\{(n,k)\in \mathbb{N}^2: n\le 3\}\cup\{(n,k)\in \mathbb{N}^2: k\in \{1,n-1\}\}\cup\{(n,k)\in \mathbb{N}^2: n\hbox{ is even, }\ k \hbox{ is odd}\}.
\end{equation}
\begin{proposition}\label{simmetrica}
 Let $U\le G$ be regular. Then the following facts hold:
\begin{itemize}
\item[$(i)$] If $(\varphi,\psi,\rho)\in U$ and $p\in \mathcal{P}$,  then $U_k(p^{(\varphi,\psi,\rho)})=\psi U_k(p). $
\item[$(ii)$] If $(n,k)\notin T$ then, for every $p\in \mathcal{P}$, $| U_k(p)|\geq 2$. In particular $U_k$ is decisive.
\item[$(iii)$]If $(n,k)\notin T$ then $U_k\in \mathfrak{C}_k^U$.
\end{itemize}
 \end{proposition}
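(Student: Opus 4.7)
The plan is to prove the three parts in turn, with most of the substance concentrated in (i) and in the counting step of (ii). Part (iii) will fall out essentially for free.

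For (i), I would start from the identity $\mathrm{Stab}_U(p^g)=g\,\mathrm{Stab}_U(p)\,g^{-1}$ (which is the content of \eqref{action-e} specialised to stabilisers), with $g=(\varphi,\psi,\rho)\in U$. A direct computation shows that conjugation by $g$ sends $(\varphi',id,id)$ to $(\varphi\varphi'\varphi^{-1},id,id)$ and, crucially using the commutativity of $\Omega$, sends $(\varphi',\psi_p,\rho_0)$ to $(\varphi\varphi'\varphi^{-1},\psi\psi_p\psi^{-1},\rho_0)$. The first computation shows that the normal subgroup $S_h\times\{id\}\times\{id\}$ is preserved under conjugation in $G$, so the defining dichotomy of $U_k$ is preserved under the $U$-action on $\mathcal{P}$. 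The second identifies $\psi_{p^g}=\psi\psi_p\psi^{-1}$. In the ``trivial'' case of the dichotomy, both $U_k(p^g)$ and $\psi U_k(p)$ equal $\mathbb{P}_k(N)$. In the other case, the substitution $x=\psi y$ turns $\psi_p y=y$ into $(\psi\psi_p\psi^{-1})x=x$, which is exactly the identity $\psi U_k(p)=U_k(p^g)$.

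For (ii), I would first unpack the hypothesis $(n,k)\notin T$, with $T$ as in \eqref{T}, into the three simultaneous conditions $n\geq 4$, $2\leq k\leq n-2$, and, when $n$ is even, $k$ is even. Splitting again on the stabiliser dichotomy: if $\mathrm{Stab}_U(p)\leq S_h\times\{id\}\times\{id\}$, then $|U_k(p)|=\binom{n}{k}\geq\binom{n}{2}\geq 6$; otherwise $\psi_p$ is a conjugate of $\rho_0$, whose cycle type consists of $\lfloor n/2\rfloor$ transpositions plus, when $n$ is odd, a single fixed point. A $k$-subset of $N$ fixed by $\psi_p$ is then exactly a union of $\psi_p$-orbits totalling $k$ elements; counting these yields $\binom{n/2}{k/2}$ when $n$ is even and $\binom{(n-1)/2}{\lfloor k/2\rfloor}$ when $n$ is odd (the fixed point being included precisely when $k$ is odd). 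The arithmetic restrictions on $(n,k)$ are precisely what will force the lower index of each such binomial coefficient to lie strictly between $0$ and the upper index, so the coefficient is at least $\lfloor n/2\rfloor\geq 2$.

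Part (iii) is then immediate: by (ii) we have $|U_k(p)|\geq 2$ for every $p\in\mathcal{P}$, so condition \eqref{sccU2} is vacuously satisfied, while (i) specialised to $\rho=id$ is condition \eqref{sccU1}; together these yield $U_k\in\mathfrak{C}^U_k$.

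The only delicate point, and so the ``main obstacle'' of the argument, is the boundary arithmetic in (ii). I need to verify case by case that the exclusions $k\neq 1$, $k\neq n-1$, together with the parity compatibility encoded in $(n,k)\notin T$, are exactly what keeps the lower binomial index in the interior of $\{0,1,\dots,\lfloor n/2\rfloor\}$ (respectively $\{0,1,\dots,(n-1)/2\}$ in the odd-$n$ case). Everything else is bookkeeping.
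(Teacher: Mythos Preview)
Your proposal is correct and follows essentially the same route as the paper. The only noteworthy difference is in part (ii): the paper argues for $|U_k(p)|\ge 2$ in the nontrivial case by a swap argument (given one $\psi_p$-invariant $k$-set $x_1$, exchange a size-$2$ orbit contained in $x_1$ with one outside it to produce a second invariant $k$-set), whereas you compute the exact count $\binom{\lfloor n/2\rfloor}{\lfloor k/2\rfloor}$ and verify the boundary arithmetic. Both arguments are equivalent in spirit; yours is slightly more quantitative, the paper's slightly more hands-on, and your version avoids the separate appeal to Proposition~\ref{decisiva} for decisiveness.
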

\begin{proof}$(i)$ Let first $p$ be such that $\mathrm{Stab}_U(p)\leq S_h\times \{id\}\times \{id\}$. Then also $\mathrm{Stab}_U(p^{(\varphi,\psi,\rho)})\leq S_h\times \{id\}\times \{id\}$ and thus $U_k(p^{(\varphi,\psi,\rho)})=U_k(p)=\mathbb{P}_k(N)=\psi \mathbb{P}_k(N)=\psi U_k(p).$

Let next $p$ be such that $\mathrm{Stab}_U(p)\nleq S_h\times \{id\}\times \{id\}$. Then also $\mathrm{Stab}_U(p^{(\varphi,\psi,\rho)})\nleq S_h\times \{id\}\times \{id\}$. We find the link between $\psi_p$ and $\psi_{p^{(\varphi,\psi,\rho)}}.$ Let $(\varphi_1,\psi_p,\rho_0)\in \mathrm{Stab}_U(p).$ Then, using the fact that $\Omega$ is abelian, we have $$(\varphi_1,\psi_p,\rho_0)^{(\varphi,\psi,\rho)}=(\varphi_1^{\varphi},\psi_p^{\psi},\rho_0^{\rho})=(\varphi_1^{\varphi},\psi_p^{\psi},\rho_0)\in \mathrm{Stab}_U(p)^{(\varphi,\psi,\rho)}=\mathrm{Stab}_U(p^{(\varphi,\psi,\rho)}).$$
Thus, $\psi_{p^{(\varphi,\psi,\rho)}}=(\psi_p)^{\psi}.$ Using $\mathbb{P}_k(N)=\psi \mathbb{P}_k(N)$ and \eqref{equal-new}, it follows that
$$U_k(p^{(\varphi,\psi,\rho)})=\{x\in  \mathbb{P}_k(N):  \psi_{p^{(\varphi,\psi,\rho)}}\, x=x\}=\{\psi x\in  \mathbb{P}_k(N): x\in \mathbb{P}_k(N),\ \psi\psi_{p}\psi^{-1}\psi\, x=\psi x\}=$$
$$\{\psi x\in  \mathbb{P}_k(N): x\in \mathbb{P}_k(N),\ \psi_{p}\, x= x\}=\psi U_k(p).$$

$(ii)$ Let $(n,k)\notin T$. Then we have $k\notin\{1,n-1\}$ and $n\geq 4$. Moreover, $n$ is odd or $n$ is even and $k$ is even.
Thus, by Lemma \ref{decisiva}, $U_k$ is decisive. Fix now $p\in \mathcal{P}$. We show that $| U_k(p)|\geq 2$.
 If $U_k(p)=\mathbb{P}_k(N),$ we have $|U_k(p)|=\binom{n}{k}\geq n\geq 2.$ If instead
$U_k(p)=\{x\in  \mathbb{P}_k(N):  \psi_p\, x=x\},$ then $U_k(p)$ is made up by all the $k$-subsets of $N$ which can be formed as union of $\psi_p$-orbits. Since $|U_k(p)|\geq 1$ we have at least one of them, say $x_1\in U_k(p)$. Let first $n$ be odd. Then $n\geq 5$ and there are $\frac{n-1}{2}\geq 2$ orbits of  $\psi_p$ of size $2$ and one orbit of size $1.$ Since $k\geq 2,$ there is at least one orbit of size $2$ included in $x_1$. On the other hand, not all the orbits of size $2$ are included in $x_1$, otherwise $k=|x_1|=n-1.$ Now exchange one orbit of size $2$ included in $x_1$ with one orbit of size $2$ left out. This builds another $k$-subset $x_2$ of $N$ belonging to $U_k(p)$. Let next $n$ be even and $k$ be even. Here $n\geq 4$ and we have $\frac{n}{2}\geq 2$ orbits of  $\psi_p$ all of size $2$. Obviously we have used at least one orbit to build $x_1$ but not all and we can exchange one orbit included in $x_1$ with one  left out building another $k$-subset $x_2$ of $N$ belonging to $U_k(p)$.

$(iii)$ By $(i)$ we have that condition \eqref{sccU1} for $U$-consistency is satisfied; by $(ii)$ we also have that, trivially, condition \eqref{sccU2} for $U$-consistency is satisfied because it is never the case to have $U_k(p)$ a singleton for any $p\in \mathcal{P}$. Thus $U_k\in \mathfrak{C}_k^U$.
\end{proof}

\begin{proof}[Proof of the implication  $(i)\Rightarrow (ii)$ of Theorem \ref{main1}] By \eqref{PU2vuoto}, we get the desired result proving that if $\mathcal{P}^U_2\neq\varnothing$ and $(n,k)\not\in T$, then $\mathfrak{F}^U_{k,U_k}=\varnothing$. Indeed, assume that $\mathcal{P}^U_2\neq\varnothing$ and $(n,k)\not\in T$ and consider $U_k$. By Proposition \ref{simmetrica}, we have that $U_k\in \mathfrak{C}_k^U$. Consider $(p^j)_{j\in\mathcal{P}^U}\in \mathfrak{S}(U)$ and pick $j\in\mathcal{P}^U_2$ so that $\mathrm{Stab}_U(p^j)\nleq S_h\times \{id\}\times \{id\}$. Then, by definition of $U_k$, we have
\[
U_k(p^j)=\{x\in  \mathbb{P}_k(N):  \psi_{p^j}\, x=x\}=\{x\in  \mathbb{P}_k(N):  \psi_{j}\, x=x\}.
\]
Therefore we surely have that, for every $x\in U_k(p)$, $\psi_j\,x=x$ and \eqref{crisi} is satisfied, so that $A^2_{U_k}(p^j)=\varnothing$ and, by Theorem \ref{fu-min-count2},  $\mathfrak{F}^U_{k,U_k}=\varnothing.$
\end{proof}

\section{Additional material for reader's convenience}\label{final}
We collect here the detailed proofs of those results which are an adaptations of results recently published by the two authors. 

\subsection{Proof of Proposition \ref{U,V-corr}}

Let us prove first that $ \mathfrak{P}^{U_1}\cap \mathfrak{P}^{U_2}=\mathfrak{P}^{\langle U_1,U_2\rangle}$.
Since $\langle U_1,U_2\rangle\leq G$ contains both $U_1$ and $U_2$, we  immediately get $\mathfrak{P}^{\langle U_1,U_2\rangle}\subseteq \mathfrak{P}^{U_1}\cap \mathfrak{P}^{U_2}.$ Let us now fix $C\in\mathfrak{P}^{U_1}\cap \mathfrak{P}^{U_2}$ and prove
 $C\in \mathfrak{P}^{\langle U_1,U_2\rangle}$.
Define, for every  $t\in \mathbb{N}$, the set $\langle U_1,U_2\rangle_t$ of the elements in $\langle U_1,U_2\rangle$ that can be written as product of $t$ elements of $U_1\cup U_2$. Then %, by definition of generated subgroup,
 we have $\langle U_1,U_2\rangle=\bigcup_{t\in\mathbb{N}}\langle U_1,U_2\rangle_t$ and to get $C\in \mathfrak{P}^{\langle U_1,U_2\rangle}$ it is enough to show the two following facts:
\begin{itemize}
\item[$(a)$] for every $t\in\mathbb{N},$
\begin{equation}\label{A1}
\mbox{for every } p\in\mathcal{P} \mbox{ and } g=(\varphi, \psi, id)\in \langle U_1,U_2\rangle_t,\mbox{ \eqref{sccU1} holds true;}
\end{equation}
\item[$(b)$]  for every $t\in\mathbb{N},$ $ p\in\mathcal{P}$ and $g=(\varphi, \psi, \rho_0)\in \langle U_1,U_2\rangle_t$, \eqref{sccU2} holds true.
\end{itemize}
For every $g=(\varphi,\psi,\rho)\in G$, define $\overline{g}=(\varphi,\psi,id)\in G$. We start
showing that, for every $t\in \mathbb{N}$,
\begin{equation}\label{B}
 g\in \langle U_1,U_2\rangle_t\ \mbox{ implies} \ \overline{g}\in \langle U_1,U_2\rangle_t.
 \end{equation}
 If $\rho=id$, there is nothing to prove. So assume $\rho=\rho_0$.
Since, for every  $i\in\{1,2\}$, we have that $U_i=Z_i\times  R_i$ with $Z_i\le S_h\times S_h$ and $R_i\le  \Omega$, then \eqref{B} surely holds for $t=1$. If $t\geq 2$, pick $g=g_1\cdots g_t=(\varphi,\psi,\rho_0)\in \langle U_1,U_2\rangle_t$, where $g_1,\ldots,g_t\in U_1\cup U_2$. Since $\rho_0$ has order two, the number of $j\in\{1,\dots,t\}$ such that the third component of $g_j$ is $\rho_0$ is odd. Pick $j\in\{1,\dots,t\}$ such that $g_j=(\varphi_j,\psi_j,\rho_0)$.
By the case $t=1$, we have that  $\overline{g}_j=(\varphi_j,\psi_j,id)\in U_1\cup U_2$, so that
$\overline{g}=g_1\ldots g_{j-1}\overline{g}_jg_{j+1}\dots g_t\in \langle U_1,U_2\rangle_t$ and its first and second components are equal to those of $g.$ Moreover, the number of factors  in $\overline{g}$ having as third component $\rho_0$ is even, which gives $\overline{g}=(\varphi,\psi,id).$

We now show $(a)$, by induction on $t$. If $t=1$, we have $g\in\langle U_1,U_2\rangle_1= U_1\cup U_2$ and so \eqref{A1} is guaranteed by $C\in\mathfrak{P}^{U_1}\cap \mathfrak{P}^{U_2}$.
Assume \eqref{A1}  up to some $t\in\mathbb{N}$ and show that it holds also for $t+1$. Let $p\in\mathcal{P}$ and $g=(\varphi,\psi,id)\in \langle U_1,U_2\rangle_{t+1}$. Then there exist $g_*=(\varphi_*,\psi_*,\rho_*)\in \langle U_1,U_2\rangle_{t}$ and $g_1=(\varphi_1,\psi_1,\rho_1)\in U_1\cup U_2$
such that $g=g_1g_*=(\varphi_1\varphi_*,\psi_1\psi_*,\rho_1\rho_*)$.
We want to show that $C(p^{g})=\psi_1\psi_*C(p)$.
Note that $g=\overline{g}_1\overline{g}_*$ and that, by \eqref{B},  $\overline{g}_*\in \langle U_1,U_2\rangle_{t}$
and $\overline{g}_1\in U_1\cup U_2.$
Then, using \eqref{action-e} and applying the inductive hypothesis for \eqref{A1} both to $\overline{g}_1$ and to $\overline{g}_*$,  we get
$C(p^{g})=C(p^{\overline{g}_1\overline{g}_*})=C((p^{\overline{g}_*})^{\overline{g}_1})=\psi_1C(p^{\overline{g}_*})=\psi_1\psi_*C(p)$.

We next show $(b)$.
Let $t\in\mathbb{N}$,
$p\in\mathcal{P}$, $g=(\varphi, \psi, \rho_0)\in \langle U_1,U_2\rangle_{t}$ and $|C(p)|=1$. We need to show that $C(p^g)\neq \psi C(p).$
First of all note that, since $\langle U_1,U_2\rangle$ contains an element with third component $\rho_0$, then we necessarily have $R_1=\Omega$ or  $R_2=\Omega$, so that $(id,id,\rho_0)\in U_1\cup U_2$.
Moreover, we can express $g$ as $g=\overline{g}\,(id,id,\rho_0)$ and,  by \eqref{B}, $\overline{g} \in \langle U_1,U_2\rangle_{t}$.
Thus, by \eqref{action-e}  and $(a)$, we have $C(p^g)=C((p^{(id,id,\rho_0)})^{\overline{g}})=\psi C(p^{(id,id,\rho_0)}).$
On the other hand, since $(id,id,\rho_0)\in U_1\cup U_2$ and $C\in\mathfrak{P}^{U_1}\cap \mathfrak{P}^{U_2}$, we get
 $C(p^{(id,id,\rho_0)})\neq C(p)$ and so, by \eqref{equal-new}, $C(p^g)=\psi C(p^{(id,id,\rho_0)})\neq \psi C(p)$ as required.

The proof of the equality $ \mathfrak{C}_k^{U_1}\cap \mathfrak{C}_k^{U_2}=\mathfrak{C}_k^{\langle U_1,U_2\rangle}$ is formally the same.

\subsection{Proof of Proposition \ref{rappresentanti2}}

Let $p\in \mathcal{P}$ and show that $C(p)=C'(p)$. We know there exist $j\in\mathcal{P}^U$ and $(\varphi,\psi,id)\in U$ such that $p=p^{j\,(\varphi,\psi,id)}$.
Then,
\begin{equation}\label{chain}
C(p)=C(p^{j\,(\varphi,\psi,id)})=\psi C(p^{j})=\psi C'(p^{j})=C'(p^{j\,(\varphi,\psi,id)})=C'(p).
\end{equation}

\subsection{Proof of Proposition \ref{rappresentanti3}}  Let $p\in \mathcal{P}$ and show that $C(p)=C'(p)$. Let $j \in \mathcal{P}^U$ be the unique orbit such that $p\in j .$
If there exists $(\varphi,\psi,id)\in U$ such that $p=p^{j\,(\varphi,\psi,id)}$, then we get $C(p)=C'(p)$ operating as in \eqref{chain}.
So, assume that,
\begin{equation}\label{cond}
\mbox{for every } (\varphi,\psi,\rho)\in U  \mbox{ such that } p=p^{j\,(\varphi,\psi,\rho)},  \mbox{ we have } \rho=\rho_0.
\end{equation}
We show that \eqref{cond} implies $\mathrm{Stab}_U(p^j)\le  S_h\times S_n\times \{id\}$.  Indeed, suppose by contradiction that there exists $(\varphi_1,\psi_1,\rho_0)\in \mathrm{Stab}_U(p^j)$. Pick  $(\varphi,\psi,\rho_0)\in U$ such that $p=p^{j\,(\varphi,\psi,\rho_0)}$ and note that, by \eqref{action-e},
\[p=p^{j\,(\varphi,\psi,\rho_0)}=(p^{j\,(\varphi_1,\psi_1,\rho_0)})^{(\varphi,\psi,\rho_0)}=p^{j\,(\varphi\varphi_1,\psi\psi_1,id)}
\]
which contradicts \eqref{cond}. As a consequence, $j\in \mathcal{P}_1^U$ and thus, by assumption, $C(p^{j\,(\varphi_*,\psi_*,\rho_0)})=C'(p^{j\,(\varphi_*,\psi_*,\rho_0)})$. Pick  again $(\varphi,\psi,\rho_0)\in U$ such that $p=p^{j\,(\varphi,\psi,\rho_0)}$ and note that, by \eqref{action-e},
\[p=p^{j\,(\varphi,\psi,\rho_0)}=(p^{j\,(\varphi_*,\psi_*,\rho_0)})^{(\varphi\varphi_*^{-1},\psi\psi_*^{-1},id)}
\]
so that, since $C$ and $C'$ are $U$-consistent, we finally obtain
\[
C(p)=C\left((p^{j\,(\varphi_*,\psi_*,\rho_0)})^{(\varphi\varphi_*^{-1},\psi\psi_*^{-1},id)}\right)=\psi\psi_*^{-1}C(p^{j\,(\varphi_*,\psi_*,\rho_0)})
\]
\[
=\psi\psi_*^{-1}C'(p^{j\,(\varphi_*,\psi_*,\rho_0)})=C'\left((p^{j\,(\varphi_*,\psi_*,\rho_0)})^{(\varphi\varphi_*^{-1},\psi\psi_*^{-1},id)}\right)=C'(p).
\]

\subsection{Proof of $(iii)\Rightarrow (i)$ in Theorem \ref{main3}}

$(iii)\Rightarrow (i)$ By Theorem \ref{main}, in order to prove that
that $\mathfrak{F}^{*U}_C\neq\varnothing$, it is sufficient to show that,  for every $p\in\mathcal{P}$,  $S(p)\cap C(p)\neq\varnothing$.
By Lemma \ref{s}$\,(i)$, since $U$ is regular, for every $p\in\mathcal{P}$, we have  $S(p)\neq\varnothing$.
Now, for every $p\in\mathcal{P}$ such that $\mathrm{Stab}_U(p)\le S_h \times \{id\}\times \{id\}$, we have $S(p)=\mathbf{L}(N)$ so that trivially
$S(p)\cap C(p)\neq\varnothing$. Thus, we are left with proving that,
for every $p\in\mathcal{P}$ such that $\mathrm{Stab}_U(p)\not \le S_h \times \{id\}\times \{id\}$,  we have $S(p)\cap C(p)\neq\varnothing$.

\vspace{2mm}

\noindent {\it From now till the end of the section, let us fix  $p\in\mathcal{P}$ and $(\varphi,\psi,\rho_0)\in \mathrm{Stab}_U(p)$. Recall that $U$ is regular;
$\psi$  is a conjugate  of $\rho_0$; $R(p)$ is irreflexive, acyclic such that $\{q\in\mathbf{L}(N): R(p)\subseteq q\}\subseteq C(p)$ and condition $(a)$ is satisfied. We have to prove that $S(p)\cap C(p) \neq\varnothing$.}
\vspace{2mm}

We are going to exhibit an element of the set $S(p)\cap C(p)$, namely the linear order $q$ defined in \eqref{q-magic}. The construction of $q$ is quite tricky and relies on some preliminary lemmas concerning the properties of the relation $R(p)$ and of its transitive extension $R^*(p)$ defined in \eqref{sigmac}. Thus, the first part of the proof is devoted to the study of such relations.

Since $\psi$ is a conjugate of $\rho_0$,  we have that $\psi$ has the same type of $\rho_0$ and, in particular, $|\psi|=2$. Let
$(\hat{x}_j)_{j=1}^r\in N^r$ be a system of representatives of the $\psi$-orbits. Thus $r$ is the number of $\psi$-orbits and we have
\[
O(\psi)=\left\{
\{\hat{x}_j,\psi(\hat{x}_j)\}: j\in \ldbrack r \rdbrack
\right\}.
\]
Note that if $n$ is even, then $r=\frac{n}{2}$; $\psi$ has no fixed point; $|\{\hat{x}_j,\psi(\hat{x}_j)\}|=2$ for all $j\in \ldbrack r \rdbrack$. If instead $n$ is odd, then $r=\frac{n+1}{2}$; $\psi$ has a unique fixed point, say $\hat{x}_r$; $|\{\hat{x}_j,\psi(\hat{x}_j)\}|=2$ for all $j\in \ldbrack r-1 \rdbrack$.

$R(p)$ acyclic implies $R(p)$ asymmetric.
Thus, $x\succeq _{R(p)} y$ is equivalent to $x\succ_{R(p)} y$ and implies $x\neq y.$
We will write, for compactness, $x\succ y$ instead of $x\succ_{R(p)} y$. Then, for every $x,y\in N$, we translate condition $(a)$ into
\begin{equation}\label{prop-R}
x\succ y\quad  \Leftrightarrow\quad  \psi(y)\succ \psi(x).
\end{equation}
Given $x,y\in N$ with $x\neq y$, a chain $\gamma$ for $R(p)$ (or a $R(p)$-chain) from $x$ to $y$ is an ordered sequence  $x_1,\ldots, x_l,$ with $l\ge 2,$ of distinct elements of $N$ such that $x_1=x$, $x_l=y,$ and, for every $j\in\ldbrack l-1 \rdbrack$, $(x_j,x_{j+1})\in R(p)$. The number $l-1$ is called the length of the chain, $x$ its starting point and $y$ its end point.

Consider now the following relation on $N$,
\begin{equation}\label{sigmac}
R^*(p)=\{(x,y)\in N^2:
\mbox{there exists a}\  R(p)\mbox{-chain from } x\   \mbox{to}\  y\},
\end{equation}
and note that $R^*(p)\supseteq R(p).$

\begin{lemma}\label{asymmetry}
$R^*(p)$ is asymmetric and transitive. Moreover, for every $x,y\in N$, $(x,y)\in R^*(p)$ if and only if $(\psi(y),\psi(x))\in R^*(p)$.
\end{lemma}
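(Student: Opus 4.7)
My plan proceeds in three stages, handling transitivity, asymmetry, and the $\psi$-equivariance property in turn. In each case the key input is the acyclicity of $R(p)$ together with the fact that, by \eqref{prop-R}, the involution $\psi$ reverses arrows of $R(p)$.

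For transitivity, I will concatenate a chain from $x$ to $y$ with a chain from $y$ to $z$. The concatenation is a walk in $R(p)$ from $x$ to $z$ whose intermediate points need not be pairwise distinct. To extract a genuine chain I apply a standard shortcut procedure: whenever the same element appears at two positions $i<j$, delete the segment strictly between them. Each such step shortens the walk while preserving that consecutive pairs lie in $R(p)$, so the procedure terminates with a walk of distinct vertices. It still remains to verify $x\neq z$ in the end, and here acyclicity enters: a walk in $R(p)$ from $x$ back to itself would contain a simple cycle of distinct points of length at least $2$, contradicting the acyclicity of $R(p)$.

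For asymmetry, I argue by contradiction. If both $(x,y)$ and $(y,x)$ belonged to $R^*(p)$, concatenating the corresponding chains would yield a closed walk of length at least $2$ in $R(p)$; the same shortcut argument then extracts a simple cycle of distinct points, contradicting acyclicity. Notice that irreflexivity of $R^*(p)$ is automatic from the definition, since an $R(p)$-chain requires $l\ge 2$ pairwise distinct elements and hence has distinct endpoints.

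Finally, for the equivariance, suppose $(x,y)\in R^*(p)$ via a chain $x=x_1,x_2,\ldots,x_l=y$. Applying \eqref{prop-R} to each consecutive pair gives $(\psi(x_{j+1}),\psi(x_j))\in R(p)$, so the reversed image $\psi(y)=\psi(x_l),\psi(x_{l-1}),\ldots,\psi(x_1)=\psi(x)$ is itself a chain, its elements being distinct because $\psi$ is a bijection. Hence $(\psi(y),\psi(x))\in R^*(p)$. The converse follows by applying the same construction to $(\psi(y),\psi(x))$ and using $\psi^2=id$, which holds because $\psi$ is a conjugate of the order-two permutation $\rho_0$. I expect the only delicate point in the overall plan to be writing out the shortcut procedure cleanly enough to cover the transitivity and asymmetry steps at once; once that is in place, the remaining arguments are direct manipulations of chains.
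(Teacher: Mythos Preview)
Your proof is correct and rests on the same ideas as the paper's---concatenation of chains, acyclicity to exclude closed walks, and applying $\psi$ to reverse a chain via \eqref{prop-R}. The organization, however, differs: the paper proves asymmetry \emph{first}, by a direct index argument (take the least index $m^*$ at which the second chain meets the first and exhibit an explicit cycle), and then uses the already-established asymmetry to show that the concatenated sequence in the transitivity step has pairwise distinct terms, so no shortcut procedure is needed there. Your route instead packages both steps into a single walk-to-path (shortcut) lemma, which is cleaner and reusable but requires you to argue separately that $x\neq z$ (equivalently, that a closed walk forces a simple cycle, contradicting acyclicity). Either ordering works; the paper's buys an explicit chain of distinct vertices in the transitivity step, while yours buys uniformity at the cost of stating the shortcut reduction carefully.
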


\begin{proof}
Let us prove first that $R^*(p)$ is asymmetric.
Let $(x,y)\in R^*(p)$. Then, there exists a $R(p)$-chain from $ x$  to $ y$, that is, there exist $l\ge 2$ distinct $x_1,\ldots,x_l\in N$ such that $x=x_1$, $y=x_l$ and, for every $j\in\ldbrack l-1 \rdbrack$, $x_j\succ x_{j+1}$.
Assume, by contradiction, that  $(y,x)\in R^*(p).$ Then there exist $m\ge 2$ distinct $y_1,\ldots,y_m\in N$ such that $y=y_1$, $x=y_m$ and, for every $j\in
\ldbrack m-1 \rdbrack$, $y_j\succ y_{j+1}$. Consider now the set
$
A=\{j\in\{2,\ldots,m\}: y_j\in\{x_1,\ldots,x_{l-1}\}\}.
$
Clearly, because $y_m=x_1,$ we have $m\in A\neq \varnothing$. Let us define then $m^*=\min A$, so that there exists $l^*\in\ldbrack l-1 \rdbrack$ such that $y_{m^*}=x_{l^*}$. Then it is easy to check that $x_{l^*},x_{l^*+1},\ldots, x_l,y_2,\ldots, y_{m^*}$ is a sequence of at least three elements in $N$, with no repetition up to the $x_{l^*}=y_{m^*},$ which is a cycle in $R(p)$ and the contradiction is found.

Let us prove now that $R^*(p)$ is transitive. Let $(x,y),(y,z)\in R^*(p)$.
Then, by the definition of $R^*(p)$,  there exist $l\ge 2$ distinct $x_1,\ldots,x_l\in N$ such that $x=x_1$, $y=x_l$ and, for every $j\in\ldbrack l-1 \rdbrack$, $x_j\succ x_{j+1}$; moreover, there are $m\ge 2$ distinct $y_1,\ldots,y_m\in N$ such that $y=y_1$, $z=y_m$ and, for every $j\in\ldbrack m-1 \rdbrack$, $y_j\succ y_{j+1}$. Consider then the sequence $\gamma$ of alternatives  $x_1,\ldots,x_l,y_2,\ldots,y_m$. We show that those alternatives are all distinct. Assume that there exist $i\in \ldbrack l \rdbrack$ and $j\in \ldbrack m \rdbrack$ with $x_i=y_j$. Then we have a $R(p)$-chain with starting point $x_i$ and end point $y$ as well as a $R(p)$-chain with starting point $y$ and end point $y_j=x_i$, that is, $(x_i,y)\in R^*(p)$ and $(y,x_i)\in R^*(p),$ against the asymmetry.
It follows that $\gamma$ is a chain from $x$ to $z$, so that $(x,z)\in R^*(p)$.

We are left with proving that $(x,y)\in R^*(p)$ if and only if $(\psi(y),\psi(x))\in R^*(p)$. Let $(x,y)\in R^*(p)$ and consider $l\ge 2$ distinct $x_1,\ldots,x_l\in N$ such that $x=x_1$, $y=x_l$ and, for every $j\in\ldbrack l-1 \rdbrack$, $x_j\succ x_{j+1}$. Defining, for every $j\in\ldbrack l \rdbrack$, $y_j=\psi(x_{l-j+1})$ and using \eqref{prop-R}, it is immediately checked that $\psi(y)=y_1$, $\psi(x)=y_l$ and that, for every $j\in \ldbrack l-1 \rdbrack$, $y_{j}\succ y_{j+1}$. In other words, we have a $R^*(p)$-chain from $\psi(y)$ to $\psi(x),$ that is,
 $(\psi(y),\psi(x))\in R^*(p)$. The other implication is now a trivial consequence of $|\psi|=2$.
\end{proof}

In what follows, we write $x\hookrightarrow y$ instead of $(x,y)\in R^*(p)$ and $x\not\hookrightarrow y$ instead of $(x,y)\notin R^*(p)$.
As consequence of Lemma \ref{asymmetry}, for every $x,y,z\in N$, the following relations hold true:
$x\not\hookrightarrow x$;  $x\hookrightarrow y$ implies  $y\not\hookrightarrow x$; $x\hookrightarrow y$ and $y\hookrightarrow z$ imply  $x\hookrightarrow z$; $x\hookrightarrow y$ is equivalent to $\psi(y)\hookrightarrow \psi(x)$.

Define now, for every $z\in N$, the subset of $N$
\[
\Gamma(z)=\{x\in N: x\hookrightarrow  z\}.
\]
Note that $z\notin \Gamma(z)$ and that it may happen that $\Gamma(z)=\varnothing$. This is the case exactly when, for every $x\in N$, the relation $x\succ  z$ does not hold.
Define now the subset of $N$
\[
\Gamma=\mbox{$\bigcup_{z\in N}$}[\Gamma(z)\cap \Gamma(\psi(z))].
\]

\begin{lemma}\label{catena-estesa}
Let $x,y\in N.$ If $y\in \Gamma$ and $x\hookrightarrow  y$, then $x\in \Gamma.$
\end{lemma}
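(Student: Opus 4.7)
The proof should be an immediate application of the transitivity of $R^*(p)$ established in Lemma \ref{asymmetry}. The plan is to unpack the definition of $\Gamma$ at $y$, then apply transitivity twice to transfer the two chain-relations from $y$ to $x$, witnessing membership of $x$ in $\Gamma$ via the same element $z$.

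More precisely, since $y \in \Gamma = \bigcup_{z\in N}[\Gamma(z)\cap \Gamma(\psi(z))]$, I would pick $z \in N$ such that $y \in \Gamma(z) \cap \Gamma(\psi(z))$. By definition of $\Gamma(z)$ and $\Gamma(\psi(z))$, this means $y \hookrightarrow z$ and $y \hookrightarrow \psi(z)$. Now the hypothesis $x \hookrightarrow y$ combined with transitivity of $R^*(p)$ (Lemma \ref{asymmetry}) yields $x \hookrightarrow z$ and $x \hookrightarrow \psi(z)$, i.e., $x \in \Gamma(z) \cap \Gamma(\psi(z)) \subseteq \Gamma$.

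There is no real obstacle here: the statement is essentially a closure-under-predecessors property of $\Gamma$, which is forced by the fact that $\Gamma$ is built out of lower sets $\Gamma(z)$ of the transitive relation $R^*(p)$. The only ingredient needed is the transitivity of $R^*(p)$, which was already proved in Lemma \ref{asymmetry}; the asymmetry and the $\psi$-compatibility are not invoked here.
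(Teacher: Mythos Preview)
Your proof is correct and follows exactly the same approach as the paper's: unpack $y\in\Gamma$ to get $z\in N$ with $y\hookrightarrow z$ and $y\hookrightarrow\psi(z)$, then use transitivity of $R^*(p)$ from Lemma~\ref{asymmetry} together with $x\hookrightarrow y$ to conclude $x\in\Gamma(z)\cap\Gamma(\psi(z))\subseteq\Gamma$.
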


\begin{proof} Let $x, y\in N$ and $x\hookrightarrow  y$ with $y\in \Gamma$. Thus,
there exists $z\in N$ such that $y\hookrightarrow z$ and $y\hookrightarrow \psi(z).$  By transitivity we conclude that also $x\hookrightarrow z$ and $x\hookrightarrow \psi(z)$, that is, $x\in \Gamma$.
\end{proof}

\begin{lemma}\label{psiR-lemma}The following facts hold true:\vspace{-2mm}
\begin{itemize}
\item[$(i)$] $\Gamma\,\cap\,\psi(\Gamma)=\varnothing;$\vspace{-2mm}
\item[$(ii)$] if $n$ is odd, then $\hat{x}_r\notin \Gamma\,\cup\, \psi(\Gamma)$.  Moreover, if $x\in \Gamma$, then $\hat{x}_r\not \hookrightarrow  x;$\vspace{-2mm}
\item[$(iii)$] for every $x\in N$, $|\{x,\psi(x)\}\cap \Gamma|\leq 1.$\vspace{-2mm}
\end{itemize}
\end{lemma}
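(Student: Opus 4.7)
The plan is to exploit the reflection property $x \hookrightarrow y \Leftrightarrow \psi(y) \hookrightarrow \psi(x)$ from Lemma \ref{asymmetry} together with transitivity and asymmetry of $\hookrightarrow$. For $(i)$, I would argue by contradiction: if $x \in \Gamma \cap \psi(\Gamma)$, then both $x\in\Gamma$ and (since $\psi$ has order $2$) $\psi(x)\in\Gamma$. Unpacking the definition of $\Gamma$ yields $z,w\in N$ with $x \hookrightarrow z$, $x \hookrightarrow \psi(z)$, $\psi(x) \hookrightarrow w$ and $\psi(x) \hookrightarrow \psi(w)$. Applying the reflection property to the last two gives $\psi(w) \hookrightarrow x$ and $w \hookrightarrow x$. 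Chaining $w \hookrightarrow x \hookrightarrow z$ gives $w \hookrightarrow z$, while chaining $\psi(w) \hookrightarrow x \hookrightarrow \psi(z)$ gives $\psi(w) \hookrightarrow \psi(z)$, which by reflection yields $z \hookrightarrow w$. The pair $z \hookrightarrow w$ and $w \hookrightarrow z$ contradicts asymmetry of $R^*(p)$ (Lemma \ref{asymmetry}).

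For $(ii)$, recall that when $n$ is odd, $\hat{x}_r$ is the unique fixed point of $\psi$, so $\psi(\hat{x}_r)=\hat{x}_r$. If $\hat{x}_r\in\Gamma$, there exists $z\in N$ with $\hat{x}_r \hookrightarrow z$ and $\hat{x}_r \hookrightarrow \psi(z)$. Reflection on the first gives $\psi(z) \hookrightarrow \psi(\hat{x}_r)=\hat{x}_r$, which together with $\hat{x}_r \hookrightarrow \psi(z)$ violates asymmetry. Since $\hat{x}_r$ is its own $\psi$-image, $\hat{x}_r\in\psi(\Gamma)$ is equivalent to $\hat{x}_r\in\Gamma$, so $\hat{x}_r\notin\Gamma\cup\psi(\Gamma)$. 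The moreover statement follows at once from Lemma \ref{catena-estesa}: if $x\in\Gamma$ and $\hat{x}_r\hookrightarrow x$, then $\hat{x}_r\in\Gamma$, contradicting what was just proved.

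For $(iii)$, I would split on whether $x$ is fixed by $\psi$. If $x=\psi(x)$, then $\{x,\psi(x)\}=\{x\}$ is a singleton, so the intersection has size at most $1$ trivially (in fact, if $n$ is odd the only such $x$ is $\hat{x}_r$, which is not in $\Gamma$ by $(ii)$, and if $n$ is even no such $x$ exists). If $x\neq\psi(x)$ and both lay in $\Gamma$, then $\psi(x)\in\Gamma$ would give $x=\psi(\psi(x))\in\psi(\Gamma)$, so $x\in\Gamma\cap\psi(\Gamma)$, contradicting $(i)$.

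The main technical step is $(i)$; once the right combination of reflections and transitivity is found to produce a $2$-cycle in $R^*(p)$, parts $(ii)$ and $(iii)$ reduce to short bookkeeping using Lemmas \ref{asymmetry} and \ref{catena-estesa}.
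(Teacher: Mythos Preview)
Your proposal is correct and follows essentially the same strategy as the paper: use the reflection property together with transitivity and asymmetry of $R^*(p)$ to manufacture a $2$-cycle. The only cosmetic differences are that in $(i)$ you derive the forbidden cycle $z\hookrightarrow w\hookrightarrow z$ whereas the paper derives $x\hookrightarrow\psi(x)\hookrightarrow x$, and in $(ii)$ you argue directly (reflecting $\hat{x}_r\hookrightarrow z$ to get $\psi(z)\hookrightarrow\hat{x}_r$) while the paper simply observes that $\hat{x}_r=\psi(\hat{x}_r)$ forces $\hat{x}_r\in\Gamma\cap\psi(\Gamma)$ and invokes $(i)$; both routes are equally short.
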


\begin{proof} $(i)$ Assume that there exists $x\in N$ with $x\in \Gamma$ and $x\in \psi(\Gamma).$ Since $|\psi|=2$, this gives $\psi(x)\in \Gamma,$ so that
there exist $z,y\in N$ with
\begin{equation}\label{x}
x\hookrightarrow  z,\quad x\hookrightarrow  \psi(z),\quad \psi(x)\hookrightarrow  y,\quad \psi(x)\hookrightarrow  \psi(y).
\end{equation}
By Lemma \ref{asymmetry} applied to the second and fourth relation in \eqref{x}, we also get
\begin{equation}\label{psix2}
z\hookrightarrow  \psi(x),\quad y\hookrightarrow  x.
\end{equation}
From \eqref{x} and \eqref{psix2} and by transitivity of $\hookrightarrow $, we deduce that $\psi(x)\hookrightarrow   x$ and $x \hookrightarrow  \psi(x)$, against the asymmetry of $R^*(p)$ established in Lemma \ref{asymmetry}.

$(ii)$ Assume that $\hat{x}_r\in \Gamma\,\cup\,\psi(\Gamma)$. Then, by (i), we have $\hat{x}_r=\psi(\hat{x}_r)\in \Gamma\,\cap\,\psi(\Gamma)=\varnothing,$ a contradiction.
Next let $x\in \Gamma$ and $\hat{x}_r\hookrightarrow  x.$ Then, by Lemma \ref{catena-estesa}, we also have $\hat{x}_r\in \Gamma$, a contradiction.

$(iii)$ Assume there is $x\in N$ such that both $x$ and $\psi(x)$ belong to $\Gamma.$ Then $x\in \psi(\Gamma)$ and so $x\in \Gamma\,\cap\,\psi(\Gamma)=\varnothing,$ against $(i)$.
\end{proof}

\begin{lemma}\label{fisso} Let $n$ be odd and $x\in N$. Then:\vspace{-2mm}
\begin{itemize}
\item[$(i)$]$x\hookrightarrow  \hat{x}_r$ implies $\{x,\psi(x)\}\cap \Gamma=\{x\}$;\vspace{-2mm}
\item[$(ii)$] $\hat{x}_r\hookrightarrow  x$ implies $\{x,\psi(x)\}\cap \Gamma=\{\psi(x)\}.$
\end{itemize}
\end{lemma}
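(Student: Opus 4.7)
The plan is to derive both statements from the definition of $\Gamma$, the fact that $\psi(\hat{x}_r)=\hat{x}_r$, and the ``swap'' property of $R^*(p)$ established in Lemma \ref{asymmetry}, combined with part (iii) of Lemma \ref{psiR-lemma}.

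For part (i), I would proceed as follows. Assume $x\hookrightarrow \hat{x}_r$. Choosing $z=\hat{x}_r$ in the definition $\Gamma=\bigcup_{z\in N}[\Gamma(z)\cap\Gamma(\psi(z))]$, I observe that $\psi(z)=\psi(\hat{x}_r)=\hat{x}_r=z$, so that the condition for $x\in\Gamma(z)\cap\Gamma(\psi(z))$ collapses to the single requirement $x\hookrightarrow \hat{x}_r$, which holds by hypothesis. Hence $x\in\Gamma$. Now Lemma \ref{psiR-lemma}(iii) forces $\psi(x)\notin\Gamma$, so $\{x,\psi(x)\}\cap\Gamma=\{x\}$, as required. (Implicit here is that $x\neq \psi(x)$: indeed $x\hookrightarrow \hat{x}_r$ rules out $x=\hat{x}_r$ by Lemma \ref{asymmetry}, and since $n$ is odd, $\hat{x}_r$ is the only fixed point of $\psi$.)

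For part (ii), I reduce to (i) by applying $\psi$. Suppose $\hat{x}_r\hookrightarrow x$. By Lemma \ref{asymmetry}, this is equivalent to $\psi(x)\hookrightarrow \psi(\hat{x}_r)=\hat{x}_r$. Thus part (i), applied to $\psi(x)$ in place of $x$, yields $\{\psi(x),\psi(\psi(x))\}\cap\Gamma=\{\psi(x)\}$. Since $|\psi|=2$, $\psi(\psi(x))=x$, so $\{x,\psi(x)\}\cap\Gamma=\{\psi(x)\}$, which is the claim.

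There is no real obstacle: the whole content sits in noticing that the fixed point of $\psi$ makes $z$ and $\psi(z)$ coincide when $z=\hat{x}_r$, so membership in $\Gamma$ is automatic, while Lemma \ref{psiR-lemma}(iii) kicks in to exclude the partner $\psi(x)$. The only point requiring a moment's care is to verify that $x$ and $\psi(x)$ are distinct so that $\{x,\psi(x)\}$ is a $2$-set, but in both cases this follows from the fact that $\hat{x}_r$ is the unique $\psi$-fixed point and $R^*(p)$ is irreflexive.
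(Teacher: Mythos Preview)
Your proof is correct and follows essentially the same route as the paper: in (i) you observe $\Gamma(\hat{x}_r)=\Gamma(\hat{x}_r)\cap\Gamma(\psi(\hat{x}_r))\subseteq\Gamma$ to get $x\in\Gamma$, then invoke Lemma~\ref{psiR-lemma}(iii) (the paper cites the equivalent part (i)) to exclude $\psi(x)$; in (ii) both you and the paper reduce to (i) via the swap property of Lemma~\ref{asymmetry}. Your parenthetical check that $x\neq\psi(x)$ is harmless but not actually required for the stated set equality.
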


\begin{proof} $(i)$ From $x\hookrightarrow  \hat{x}_r$, we get $x\in \Gamma(\hat{x}_r)\subseteq \Gamma$ and thus Lemma \ref{psiR-lemma}(i) gives $\psi(x)\notin \Gamma$. It follows that $\{x,\psi(x)\}\cap \Gamma=\{x\}$.

$(ii)$  From $\hat{x}_r\hookrightarrow  x$, using Lemma \ref{asymmetry}, we obtain $\psi(x)\hookrightarrow  \hat{x}_r$ and (i) applies to $\psi(x)$, giving $\{x,\psi(x)\}\cap \Gamma=\{\psi(x)\}.$
\end{proof}

Given $X\subseteq N$, define
\[
\Theta(p,X)=\left\{l\in \mathbf{L}(X) :   R(p)\cap X^2\subseteq l\right\}.
\]
Since $R$ is acyclic,  then $R(p)\cap X^2$ is an acyclic relation on $X$ and thus it admits a linear extension, so that $\Theta(p,X)\neq \varnothing.$
Note that if $Y\subseteq X \subseteq N$ and $l\in \Theta(p,X),$ then the restriction of $l$ to $Y$ belongs to $\Theta(p,Y).$
Moreover, since $C_R$ refines $C$, we have $ \Theta(p,N)\subseteq C(p).$

For every $j\in \ldbrack r \rdbrack$, consider $\{\hat{x}_j,\psi(\hat{x}_j)\}\cap \Gamma$. By Lemma \ref{psiR-lemma}$\,(iii)$ the order of this set cannot exceed 1. Define then the sets
\[
J=\{j\in \ldbrack r \rdbrack: |\{\hat{x}_j,\psi(\hat{x}_j)\}\cap \Gamma|=1\}, \quad J^*=\{j\in \ldbrack r \rdbrack\setminus J: |\{\hat{x}_j,\psi(\hat{x}_j)\}|= 2\}.
\]
Of course, for every $j\in \ldbrack r \rdbrack\setminus J$, we have $\{\hat{x}_j,\psi(\hat{x}_j)\}\cap \Gamma=\varnothing.$ Note also that, when $n$ is even, we have
$J\cup J^*=\ldbrack r \rdbrack$; if $n$ is odd, by Lemma \ref{psiR-lemma}$\,(ii)$, we have $J\cup J^*=\ldbrack r-1 \rdbrack$.
For every $j\in J$, let us call $y_j$ the unique element in the set $\{\hat{x}_j,\psi(\hat{x}_j)\}\cap \Gamma$ so that $\Gamma=\{y_j:j\in J\}.$

Consider now the subset of $N$ defined by
\[
T=\{y_j : j\in J\}\cup \,\mbox{$\bigcup_{j\in J^*}$}\{\hat{x}_j,\psi(\hat{x}_j)\}
\]
and note that $T\supseteq \Gamma.$ Let $l\in \Theta(p,T)$ and for $j\in J^*,$ let $y_j$ be the maximum of $\{\hat{x}_j,\psi(\hat{x}_j)\}$ with respect to $l$, so that $y_j>_l \psi(y_j)$. Define
\[
M=\{y_j:j\in J\cup J^*\}.
\]
Observe that if $n$ is even, then $M\cup \psi(M)=N$; if $n$ is odd, then $M\cup \psi(M)=N\setminus\{\hat{x}_r\}$. Moreover, we have $|M|=\lfloor \frac{n}{2}\rfloor$, $M\cap \psi(M)=\varnothing$ and $\Gamma\subseteq M \subseteq T$.
The restriction of $l\in \Theta(p,T)$ to $M$ is a linear order $g\in \Theta(p,M)$. We can assume that
\[
a_1\succ_g \ldots \succ_g a_{\lfloor \frac{n}{2}\rfloor}
\]
where $M=\{a_1,\ldots, a_{\lfloor \frac{n}{2}\rfloor}\}.$
 Finally,  consider the linear order $q$ on $N$ such that
\begin{equation}\label{q-magic}
\begin{array}{ll}
a_1\succ_q\ldots\succ_q a_{\lfloor \frac{n}{2}\rfloor}\succ_q
\psi\big(a_{\lfloor \frac{n}{2}\rfloor}\big)\succ_q \ldots\succ_q \psi(a_1)
&\mbox{ if }n\mbox{ is even}\\
\vspace{-2mm}\\
a_1\succ_q \ldots \succ_q a_{\lfloor \frac{n}{2}\rfloor}\succ_q
\hat{x}_r\succ_q
\psi\big(a_{\lfloor \frac{n}{2}\rfloor}\big)\succ_q  \ldots\succ_q  \psi(a_1)
&\mbox{ if }n\mbox{ is odd},\\
\end{array}
\end{equation}
and prove that $q\in S(p)\cap C(p)$.
Note that in the upper part of $q$ there are the alternatives in $M$ and in the lower one those in $\psi(M)$; in the odd case, the fixed point $\hat{x}_r$ of $\psi$ is ranked in the middle, at the position
$r=\frac{n+1}{2}$.

By construction, we have that
 $\psi q\rho_0=q,$ which implies, due to the regularity of $U$, that $q\in S(p)$.
As a consequence, we have that, for every $x,y\in N$,
\begin{equation}\label{consequence}
y\succ_q x \  \hbox{ if and only if } \psi(x)\succ_q \psi(y),
\end{equation}
because, since $\psi^2=id$,
\[
y\succ_q x\quad\Leftrightarrow \quad y\succ_{\psi q\rho_0} x\quad\Leftrightarrow \quad \psi^2(x)\succ_{\psi q} \psi^2(y)\quad\Leftrightarrow \quad \psi(x)\succ_{ q} \psi(y).
\]

In order to complete the proof we need to show that $q\in C(p)$. Since we know that $ \Theta(p,N)\subseteq C(p)$, we proceed  showing that $q\in \Theta(p,N).$ We need to prove that $R(p)\subseteq q$, that is, that  for every $x,y\in N$, $x\succ  y$ implies $x\succ_q y$.
Since when $n$ is even we have $N=M\cup \psi(M)$ and when $n$ is odd we have $N=M\cup \psi(M)\cup\{\hat{x}_r\},$ we reduce to prove that, for every $x,y\in M$:\vspace{-1mm}
\begin{itemize}
\item [$(a)$] $x\succ  y$ implies $x\succ_q y$;\vspace{-1mm}
\item [$(b)$] $x\succ \psi(y)$ implies $x\succ_q \psi(y)$;\vspace{-1mm}
\item [$(c)$] $\psi(x)\succ \psi(y)$ implies $\psi(x)\succ_q \psi(y)$;\vspace{-1mm}
\item [$(d)$] $\psi(x)\succ  y$ implies $\psi(x)\succ_q y$,\vspace{-1mm}
\end{itemize}
and, in the odd case, showing further that, for every $x\in M$:\vspace{-1mm}
\begin{itemize}
\item [$(e)$] $x\succ  \hat{x}_r$ implies $x\succ_q \hat{x}_r$;\vspace{-1mm}
\item [$(f)$] $\hat{x}_r\succ  x$ implies $\hat{x}_r\succ_q x$;\vspace{-1mm}
\item [$(g)$] $\psi(x)\succ  \hat{x}_r$ implies $\psi(x)\succ_q \hat{x}_r$;\vspace{-1mm}
\item [$(h)$] $\hat{x}_r\succ  \psi(x)$ implies $\hat{x}_r\succ_q \psi(x).$\vspace{-1mm}
\end{itemize}
Fix then $x,y\in M$ and prove first $(a)$, $(b)$, $(c)$ and $(d)$. Note that $x\neq \psi(x)$ and $y\neq\psi(y),$ because we have observed that if $\psi$ admits the fixed point $\hat{x}_r$, then $\hat{x}_r\notin M.$\vspace{-1mm}
\begin{itemize}
\item [$(a)$] Let $x\succ  y$. Since $g\in \Theta(p,M)$, we have $x\succ_g y$ and then also $x\succ_q y$.\vspace{-1mm}
\item [$(b)$] By the construction of $q$, for every $x,y\in M,$ we have $x\succ_q \psi(y).$\vspace{-1mm}
\item [$(c)$] Let $\psi(x)\succ \psi(y)$. Then, by  \eqref{prop-R}, we also have $y\succ  x$ and by $(a)$ we get $y>_q x$, which  by (\ref{consequence}) implies
$\psi(x)\succ_q \psi(y)$.\vspace{-1mm}
\item [$(d)$] Let us prove that the condition $\psi(x)\succ  y$ never realizes. Indeed, assume by contradiction $\psi(x)\succ  y$. Then, by  \eqref{prop-R}, we also have $\psi(y)\succ  x$.
If $y\in \Gamma$, then, by  \eqref{prop-R},  we have $\psi(x)\in \Gamma\subseteq M$ and thus $x\in M\cap \psi(M)=\varnothing$, a contradiction.
Thus, we reduce to the case $y\not\in \Gamma$. If $x\in \Gamma$, then, again by Lemma \ref{catena-estesa}, $\psi(y)\in \Gamma\subseteq M$ and we get the  contradiction $y\in M\cap \psi(M)=\varnothing$.
 If instead $x\not\in \Gamma$, then $x,\psi(x),y,\psi(y)\in T$.
As a consequence,  $\psi(x)\succ  y$ implies $\psi(x)\succ_l y$ and $\psi(y)\succ  x$ implies $\psi(y)\succ_l x$. Moreover, as $y$ is the maximum of $\{y,\psi(y)\}$ with respect to $l,$ we also have $y\succ_l\psi(y)$. By transitivity of $f$, we then get $\psi(x)\succ_l x,$ which is a contradiction because $x$ is the maximum of $\{x,\psi(x)\}$ with respect to $l$.\vspace{-1mm}
\end{itemize}
Assume now that $n$ is odd. Then fix $x\in M$ and prove $(e), (f), (g)$ and $(h)$.\vspace{-1mm}
\begin{itemize}
\item [$(e)$] This case is trivial, because, by the construction of $q$, for every $x\in M$, we have $x\succ_q \hat{x}_r.$\vspace{-1mm}
\item [$(f)$] Let us prove that it cannot be $\hat{x}_r\succ  x$. Assume, by contradiction, $\hat{x}_r\succ  x$. Then, by Lemma \ref{fisso}(ii) we get $\psi(x)\in \Gamma\subseteq M,$ against $M\cap \psi(M)=\varnothing.$\vspace{-1mm}
\item [$(g)$] Let us prove that it cannot be $\psi(x)\succ  \hat{x}_r$.  Assume, by contradiction, $\psi(x)\succ  \hat{x}_r$. Then, by Lemma \ref{fisso}(i), we get
$\psi(x)\in \Gamma\subseteq M$ against $M\cap \psi(M)=\varnothing$.\vspace{-1mm}
\item [$(h)$] This case is trivial because, by the construction of $q$, we have $\hat{x}_r\succ_q\psi(x)$ for all $x\in M.$
\end{itemize}

\subsection{Proof of Proposition \ref{existence}}

Given $p\in \mathcal{P},$ there exist a unique $j\in \mathcal{P}^U$ such that $p=p^{j\,(\varphi,\psi,\rho)}$ for some $(\varphi,\psi,\rho)\in U$. We claim  that, if for some $j\in \mathcal{P}^U$ there exist $(\varphi_1,\psi_1,\rho_1),(\varphi_2,\psi_2,\rho_2)\in U$ such that $p^{j\,(\varphi_1,\psi_1,\rho_1)}=p^{j\,(\varphi_2,\psi_2,\rho_2)}$, then $\psi_1 q_{j} \rho_1 =\psi_2 q_{j} \rho_2$. Indeed, by (\ref{action-e}), we have that $p^{j\,(\varphi_1,\psi_1,\rho_1)}=p^{j\,(\varphi_2,\psi_2,\rho_2)}$ implies
$(\varphi^{-1}_2\varphi_1,\psi_2^{-1}\psi_1,\rho_2^{-1}\rho_1)\in \mathrm{Stab}_U(p^{j}).$
Since $q_{j}\in S(p^{j})$ and $\Omega$ is abelian, we have that
$q_{j}=\psi^{-1}_2\psi_1 q_{j} \rho_2^{-1}\rho_1=\psi^{-1}_2\psi_1 q_{j} \rho_1\rho_2^{-1}$, and thus $\psi_1 q_{j} \rho_1 =\psi_2 q_{j} \rho_2$.

As a consequence, the resolute {\sc spc} $f$ defined, for every $p\in\mathcal{P}$, by $f(p)=\psi q_j \rho$, where $j\in \mathcal{P}^U$ and $(\varphi,\psi,\rho)\in U$ are such that $p=p^{j\,(\varphi,\psi,\rho)}$, is consistent. Moreover, for every $j\in \mathcal{P}^U$, $f(p^j)=q_j$.
Let us prove that $f\in \mathfrak{P}^{*U}$. Consider $p\in \mathcal{P}$ and $(\varphi,\psi,\rho)\in U$. Let $p=p^{j\,(\varphi_1,\psi_1,\rho_1)}$, for some $j\in \mathcal{P}^U$ and
$(\varphi_1,\psi_1,\rho_1)\in U$.
By the definition of $f$ and by \eqref{action-e} and using again the fact that $\Omega$ is abelian,  we have
\[
f(p^{(\varphi,\psi,\rho)})=f\left(\left(p^{j\,(\varphi_1,\psi_1,\rho_1)}\right)^{(\varphi,\psi,\rho)}\right)
=f(p^{j\,(\varphi\varphi_1,\psi\psi_1,\rho\rho_1)})=(\psi\psi_1)q_j(\rho\rho_1)
\]
\[
=(\psi\psi_1)q_j(\rho_1\rho)=\psi(\psi_1q_j\rho_1)\rho=\psi f(p^{j\,(\varphi_1,\psi_1,\rho_1)})\rho=\psi f(p)\rho.
\]
In order to prove the uniqueness of $f$, it suffices to note that if $f'\in\mathfrak{F}^{*U}$ is such that, for every $j\in \mathcal{P}^U$, $f'(p^j)=q_j$, then we have  $f=f'$ by
Proposition \ref{rappresentanti1}.

\subsection{Proof of Proposition \ref{f-fu}}

Let $C\in\mathfrak{P}^U$. Consider $f\in \mathfrak{F}$ defined as follows. Given $p\in \mathcal{P}$, consider the unique $j\in\mathcal{P}^U$ such that $p\in j$ and the nonempty set $U_p=\{(\varphi,\psi,id)\in U: p=p^{j\,(\varphi,\psi,id)}\}$. Pick $(\varphi,\psi,id)\in U_p$ and let
$f(p)=\psi x_j$. We need to prove that the value of $f(p)$ does not depend on the particular element chosen in $U_p$.
Indeed, let $(\varphi_1,\psi_1,id),(\varphi_2,\psi_2,id)\in U_p$ and note that $(\varphi_2^{-1}\varphi_1,\psi_2^{-1}\psi_1,id)\in \mathrm{Stab}_U(p^j)$.
Since $U$ is regular, that gives  $\psi_1=\psi_2$ and then $\psi_1 x_j=\psi_2x_j$.

We show that $f$ satisfies all the desired properties.
Since $U\le G$, we have $(id,id,id)\in U$ and thus the definition of $f$ immediately implies
 $f(p^j)=x_j$.
Let us prove that $f\in\mathfrak{F}^U$.
Consider $p\in\mathcal{P}$ and $(\varphi,\psi,id)\in U$ and show that $f(p^{(\varphi,\psi,id)})=\psi f(p)$. Let $p=p^{j\,(\varphi_1,\psi_1,id)}$ for suitable $j\in\mathcal{P}^U$ and $(\varphi_1,\psi_1,id)\in U$. Thus,
$
f(p)=\psi_1x_j
$
and, by \eqref{action-e},
$
f(p^{(\varphi,\psi,id)})=f(p^{j\,(\varphi\varphi_1,\psi\psi_1,id)})=\psi\psi_1  x_j=\psi f(p).
$

Let us next prove that $f\in\mathfrak{F}_{C}$. Consider then $p\in\mathcal{P}$ and show that $f(p)\in C(p)$. Let $p=p^{j\,(\varphi_1,\psi_1,id)}$ for suitable $j\in\mathcal{P}^U$ and $(\varphi_1,\psi_1,id)\in U$. Thus,
$
f(p)=\psi_1x_j
$ and, since
$C$ is $U$-consistent,
$
\psi_1x_j\in \psi_1C(p^j)=C(p^{j\,(\varphi_1,\psi_1,id)})=C(p).
$
Finally, in order to prove uniqueness, let $f'\in\mathfrak{F}^U_C$ such that
 $f'(p^j)=x_j$ for all $j\in\mathcal{P}^U$. Then $f'$ and $f$ coincides on $(p^j)_{j\in\mathcal{P}^U}\in\mathfrak{S}(U)$ and Proposition \ref{rappresentanti2} applies giving $f'=f.$

Let now $C\in\mathfrak{C}^U_k$. The proposition can be proved using the same argument.

\subsection{Proof of Theorem \ref{fu-min-2}}

Let $C\in\mathfrak{P}^U$. Consider $f\in\mathfrak{F}^U_{C}$ and note that,
 for every $j\in\mathcal{P}^U$, $f(p^j)\in C(p^j)$ and $\Phi\left((f(p^j))_{j\in\mathcal{P}^U}\right)=f$. Thus $\Phi$ is bijective and we have
 $|\mathfrak{F}^U_{C}|=\left|\times_{j\in\mathcal{P}^U}C(p^j)	\right|= \prod_{j\in\mathcal{P}^U}\left|C(p^j)\right|.$
 Finally note that if $C$ is decisive, then, for every $j\in\mathcal{P}^U$,  $C(p^j)\neq \varnothing$ so that $\mathfrak{F}^U_{C}\neq\varnothing$.

Let now $C\in\mathfrak{C}^U_k$. The theorem can be proved using the same argument.

\subsection{Proof of Proposition \ref{fu-min-ex}}

Let $C\in\mathfrak{P}^U$.
Given $j\in\mathcal{P}_2^U$, consider the set $K^U(p^j)=\left\{\sigma\in S_n: \psi_j=\sigma \rho_0 \sigma^{-1}  \right\}$, where $\psi_j$ is defined in \eqref{psi-j}.
Since $U$ is regular, $K^U(p^j)$ is nonempty so that we can choose an element $\sigma_j$ in $K^U(p^j)$. Note that, for every $j\in\mathcal{P}_2^U$ and  $(\varphi,\psi,\rho)\in \mathrm{Stab}_U(p^j)$, we have $\psi=\sigma_j\rho\sigma_j^{-1}$.

Let us consider then $f\in\mathfrak{F}$ defined, for every $p\in\mathcal{P}$, as follows. Given $p\in \mathcal{P}$, consider the unique $j\in\mathcal{P}^U$ such that $p\in j$ and the nonempty set $U_p=\{(\varphi,\psi,\rho)\in U: p=p^{j\,(\varphi,\psi,\rho)}\}$. Pick $(\varphi,\psi,\rho)\in U_p$ and let
\[
f(p)=
\left\{
\begin{array}{ll}
\psi y_j&\mbox{ if }j\in\mathcal{P}_1^U \mbox{ and }\rho=id\\
\vspace{-2mm}\\
\psi\psi_*^{-1}z_j&\mbox{ if }j\in\mathcal{P}_1^U \mbox{ and } \rho=\rho_0\\
\vspace{-2mm}\\
\psi \sigma_{j} \rho \sigma_{j}^{-1}x_j&\mbox{ if }j\in\mathcal{P}_2^U
\end{array}
\right.
\]
We need to prove that the value of $f(p)$ does not depend on the particular element chosen in $U_p$.
Indeed, let $(\varphi_1,\psi_1,\rho_1),(\varphi_2,\psi_2,\rho_2)\in U_p$ and recall that %\eqref{action-e} implies
$(\varphi_2^{-1}\varphi_1,\psi_2^{-1}\psi_1,\rho_2^{-1}\rho_1)\in \mathrm{Stab}_U(p^j)$.
\begin{itemize}
\item[-] If $j\in\mathcal{P}_1^U$, then $(\varphi_2^{-1}\varphi_1,\psi_2^{-1}\psi_1,\rho_2^{-1}\rho_1)\in \mathrm{Stab}_U(p^j)$ implies
 $\rho_2=\rho_1$ and $\psi_1=\psi_2$. As a consequence, if $\rho_1=\rho_2=id$, then
$\psi_1y_j=\psi_2y_j$, while if $\rho_1=\rho_2=\rho_0$, then
$(\psi_1\psi_*^{-1})z_j=(\psi_2\psi_*^{-1})z_j$.
\item[-] If  $j\in\mathcal{P}_2^U$, then $(\varphi_2^{-1}\varphi_1,\psi_2^{-1}\psi_1,\rho_2^{-1}\rho_1)\in \mathrm{Stab}_U(p^j)$ implies
$\psi_2^{-1}\psi_1= \sigma_j \rho_2^{-1}\rho_1 \sigma_j^{-1}$, that is, $\psi_1 \sigma_j \rho_1 \sigma_j^{-1}=\psi_2 \sigma_j \rho_2 \sigma_j^{-1}$,
as $\rho=\rho^{-1}$ for all $\rho\in\Omega$. Then we get
$
\psi_1 \sigma_j \rho_1 \sigma_j^{-1}x_j=\psi_2 \sigma_j \rho_2 \sigma_j^{-1}x_j.
$
\end{itemize}

We show that $f$ satisfies all the desired properties.
Since $U\le G$, we have $(id,id,id)\in U$ and thus the definition of $f$ immediately implies
 $f(p^j)=y_j$ and $f(p^{j\,(\varphi_*,\psi_*,\rho_0)})=z_j$ for all $j\in\mathcal{P}_1^U$,
and $f(p^j)=x_j$ for all $j\in\mathcal{P}_2^U$.
We prove that $f\in\mathfrak{F}^U$.
Consider $p\in\mathcal{P}$ and $(\varphi,\psi,\rho)\in U$ and show that if $\rho=id$, then $f(p^{(\varphi,\psi,\rho)})=\psi f(p)$; if $\rho=\rho_0$, then  $f(p^{(\varphi,\psi,\rho)})\neq\psi f(p)$. Let $p=p^{j\,(\varphi_1,\psi_1,\rho_1)}$ for suitable $j\in\mathcal{P}^U$ and $(\varphi_1,\psi_1,\rho_1)\in U$.
\begin{itemize}
\item[-] If $j\in\mathcal{P}_1^U$ and $\rho_1=id$, then
$
f(p)=\psi_1 y_j.
$
By \eqref{action-e},
if $\rho=id$, then
$
f(p^{(\varphi,\psi,\rho)})=f(p^{j\,(\varphi\varphi_1,\psi\psi_1,id)})=\psi\psi_1 y_j=\psi f(p),
$
while if $\rho=\rho_0$, then
$
f(p^{(\varphi,\psi,\rho)})=f(p^{j\,(\varphi\varphi_1,\psi\psi_1,\rho_0)})=\psi\psi_1 \psi_*^{-1}z_j\neq \psi\psi_1 y_j=\psi f(p),
$
since $z_j\neq \psi_* y_j$ because $(y_j,z_j)\in A_C^1(p^j)$.
\item[-] If $j\in\mathcal{P}_1^U$ and $\rho_1=\rho_0$, then
$
f(p)=\psi_1 \psi^{-1}_*z_j.
$
By \eqref{action-e}, if $\rho=id$, then
$
f(p^{(\varphi,\psi,\rho)})=f(p^{j\,(\varphi\varphi_1,\psi\psi_1,\rho_0)})=\psi\psi_1 \psi^{-1}_*z_j=\psi f(p),
$
while if $\rho=\rho_0$, then
$
f(p^{(\varphi,\psi,\rho)})=f(p^{j\,(\varphi\varphi_1,\psi\psi_1,id)})=\psi\psi_1 y_j\neq \psi\psi_1 \psi^{-1}_*z_j=\psi f(p),
$
since $z_j\neq \psi_* y_j$ because $(y_j,z_j)\in A_C^1(p^j)$.
\item[-] If $j\in\mathcal{P}_2^U$, then
$
f(p)=\psi_1 \sigma_{j} \rho_1 \sigma_{j}^{-1} x_j
$
and, by \eqref{action-e},
$$
f(p^{(\varphi,\psi,\rho)})=f(p^{j\,(\varphi\varphi_1,\psi\psi_1,\rho\rho_1)})=\psi\psi_1 \sigma_{j} \rho \rho_1\sigma_{j}^{-1} x_j.
$$
As a consequence, if $\rho=id$, we get $f(p^{(\varphi,\psi,\rho)})=\psi f(p) $.
If instead $\rho=\rho_0$, we have that  $f(p^{(\varphi,\psi,\rho)})\neq\psi f(p)$ if and only if
$
\psi\psi_1 \sigma_{j} \rho_0 \rho_1 \sigma_{j}^{-1} x_j\neq
\psi\psi_1 \sigma_{j} \rho_1 \sigma_{j}^{-1} x_j
$
if and only if
$
\sigma_{j} \rho_0  \sigma_{j}^{-1} x_j\neq
x_j.
$
However, the last relation holds true since $\sigma_{j} \rho_0  \sigma_{j}^{-1} =\psi_j$ and $\psi_jx_j\neq x_j$ because $x_j\in A_C^2(p^j)$.
\end{itemize}

Let us next prove that $f\in\mathfrak{F}_C$.
Consider $p\in\mathcal{P}$ and show that  $f(p)\in C(p)$. Let $p=p^{j\,(\varphi_1,\psi_1,\rho_1)}$ for suitable $j\in\mathcal{P}^U$ and $(\varphi_1,\psi_1,\rho_1)\in U$.
\begin{itemize}
	\item[-] If $j\in\mathcal{P}_1^U$ and $\rho_1=id$, then $f(p)=\psi_1 y_j$ and, by the $U$-consistency of $C$,
$\psi_1 y_j\in\psi_1C(p^j)=C(p^{j\,(\varphi_1,\psi_1,id)})=C(p)$.
\item[-] If $j\in\mathcal{P}_1^U$ and $\rho_1=\rho_0$, then $f(p)=\psi_1\psi_*^{-1}z_j$ and, by \eqref{action-e} and the $U$-consistency of $C$,
\[
\psi_1\psi_*^{-1}z_j\in \psi_1\psi_*^{-1}C(p^{j\,(\varphi_*,\psi_*,\rho_0)})=
\]
\[
C\left((p^{j\,(\varphi_*,\psi_*,\rho_0)})^{(\varphi_1\varphi_*^{-1},\psi_1\psi_*^{-1},id)}\right)=C(p^{j\,(\varphi_1,\psi_1,\rho_0)})=C(p).
\]
\item[-] If $j\in\mathcal{P}_2^U$ and $\rho_1=id$, then $f(p)=\psi_1x_j$ and, by the $U$-consistency of  $C$,
$\psi_1x_j\in \psi_1 C(p^j)
=C(p^{j\,(\varphi_1,\psi_1,id)})=C(p)$.
\item[-] If $j\in\mathcal{P}_2^U$ and $\rho_1=\rho_0$, then let
 $(\varphi_2,\psi_2,\rho_0)\in U$ be such that $p^{j\,(\varphi_2,\psi_2,\rho_0)}=p^j$. By \eqref{action-e}, we have
\[
p=p^{j\,(\varphi_1,\psi_1,\rho_0)}=(p^{j\,(\varphi_2,\psi_2,\rho_0)})^{(\varphi_1\varphi_2^{-1},\psi_1\psi_2^{-1},id)}=p^{j\,(\varphi_1\varphi_2^{-1},\psi_1\psi_2^{-1},id)}.
\]
Thus, $f(p)=\psi_1\psi_2^{-1}x_j$ and, by the $U$-consistency of  $C$,
$$\psi_1\psi_2^{-1}x_j\in \psi_1\psi_2^{-1} C(p^j)
=C(p^{j\,(\varphi_1\varphi_2^{-1},\psi_1\psi_2^{-1},id)})=C(p).$$
\end{itemize}

Finally, in order to prove uniqueness, let $f'\in\mathfrak{F}^U_C$ such that
 $f'(p^j)=y_j$ and $f'(p^{j\,(\varphi_*,\psi_*,\rho_0)})=z_j$ for all $j\in\mathcal{P}_1^U$,
and $f'(p^j)=x_j$ for all $j\in\mathcal{P}_2^U$. Then $f,f'\in \mathfrak{P}^U$ realize $f(p^j)=f'(p^j)$ for all $j\in \mathcal{P}^U$ and $f(p^{j\,(\varphi_*,\psi_*,\rho_0)})=f'(p^{j\,(\varphi_*,\psi_*,\rho_0)})$ for all $j\in \mathcal{P}_1^U$. Hence, the thesis follows from Proposition \ref{rappresentanti2}.

Let now $C\in\mathfrak{C}^U_k$. The proposition can be proved using the same argument.

\end{document}